\documentclass[11pt]{article}

\usepackage[utf8]{inputenc}
\usepackage{amsmath, amssymb, amsthm, mathtools}
\usepackage{graphicx}
\usepackage{hyperref}
\usepackage{geometry}
\usepackage{cite}
\usepackage{tikz}
\usepackage{tikz-cd}
\usepackage{subcaption}
\usepackage{float}

\newtheorem{theorem}{Theorem}[section]

\newtheorem{lemma}[theorem]{Lemma}
\newtheorem{proposition}[theorem]{Proposition}
\newtheorem{corollary}[theorem]{Corollary}
\newtheorem{remark}[theorem]{Remark}

\newtheorem{problem}[theorem]{Problem}
\newtheorem{example}[theorem]{Example}

\newcommand{\R}{\mathbb{R}}
\newcommand{\epsF}{\mathcal{F}_{d,n}^\varepsilon}
\newcommand{\F}{\mathcal{F}_{d,n}}

\title{Geometric Perspective on Concentration Phenomena in Frame Theory}
\author{
  Samuel A.\ Ballas \and
  Ferhat Karabatman \and
  Tom Needham
}
\date{\today}

\begin{document}

\maketitle

\begin{abstract}
Parseval and equal-norm frames play a fundamental role in frame theory and signal processing. It is known that a random frame, with unit vectors drawn independently from the uniform distribution on the sphere, will be nearly Parseval with high probability; asymptotic results go back at least to Goyal, Vetterli, and Thao and a non-asymptotic error bound was proved more recently by Kwok, Lau, and Ramachandran. In this work, we prove a dual result, which shows that random Parseval frames, with respect to the Haar measure, are nearly equal-norm with high probability. Our proofs are geometric in nature, and rely on general measure concentration principles in Riemannian manifolds. Using these techniques, we also give a novel probabilistic upper bound for the Paulsen problem.
\end{abstract}

\section{Introduction}\label{sec:intro}

Frames provide redundant systems of vectors for representing signals in Hilbert spaces. 
Unlike bases, frames allow stable reconstruction while permitting redundancy, a feature 
that makes them particularly useful in signal processing, data transmission, and related 
areas \cite{Christensen2016FramesRieszBases, CasazzaKutyniok2013, Casazza2000, KovacevicChebira2007}. In finite dimensions, a frame is simply a spanning family of vectors 
\((x_i)_{i=1}^n \subset \mathbb{R}^d\), and its redundancy is reflected in the fact that 
one may have \(n>d\). This redundancy is not merely a formal feature: it plays a central 
role in the stability of reconstruction, especially when the transmitted coefficients are 
affected by noise or erasures.

A central class of frames is given by  \emph{Parseval frames}. 
For such frames, the \emph{frame operator}, or the map $\R^d \to \R^d$ defined by 
\[
v \mapsto \sum_{i=1}^n \langle v, x_i \rangle x_i,
\]
is the identity map. The interpretation is that, in this case, the signal $v$ can be reconstructed from its sequence of linear measurements $(\langle v, x_i \rangle)_{i=1}^n \in \R^n$ in a particularly 
simple and stable manner. Equal-norm tight frames, where $\|x_i\|=\|x_j\|$ for all $i,j$, are also especially important because they 
distribute the importance of the linear measurements uniformly. This balance is closely connected 
to robustness under coefficient losses and to optimality properties in reconstruction 
problems. Several foundational results illustrate the importance of these structures. Goyal et 
al.\, showed that, under an additive noise model, the mean squared 
error is minimized precisely by tight frames~\cite{Goyal2001Quantized}. Subsequent works further emphasized the 
role of tight and equal-norm tight frames in reconstruction problems involving noise and 
erasures. For instance, Casazza and Kovačević~\cite{CasazzaKovacevic2003Erasures} and 
Holmes and Paulsen~\cite{HolmesPaulsen2004OptimalErasures} showed that equal-norm tight 
frames enjoy strong optimality properties in erasure settings. Taken together, these 
results indicate that both tightness and equal distribution of vector norms are essential 
features for stable and robust signal representation.

In light of the above, frames which are both Parseval and equal-norm are desirable in applications. Unfortunately, the space of equal-norm Parseval frames forms a complicated subvariety of $\R^{d \times n}$ which is hard to sample directly~\cite{BenedettoFickus2003,Dykema:2006ux,Cahill:2017gv,needham2022toric,mixon2024three}. On the other hand, spaces of frames which are only constrained to be Parseval or to be equal-norm (i.e., only satisfying one of the desired conditions) are easy to sample, as they can be identified with Stiefel manifolds or products of spheres, respectively. The goal of this paper is to address the sampling problem from a measure concentration perspective, addressing the natural question:  if one samples vectors satisfying one of the desired conditions (e.g., the Parseval condition), how likely is it that the resulting system is close to satisfying the other condition (e.g., the unit norm condition)? In other words, to what extent are 
the frame-theoretic structures that are optimal for reconstruction typical within natural 
random ensembles? 

\subsection{Contributions and Outline}

In this work, we address the questions above from the perspective of measure concentration on Riemannian manifolds (see~\cite{ledoux2001concentration,vershynin2020high}). An outline of the paper and descriptions of our main contributions are as follows:

\noindent \textbf{Concentration on Parseval Frames.}  In Section~\ref{concent_sphere_ball}, we consider random equal-norm frames, considered as i.i.d. samples from the uniform measure on a sphere. It is known in the frame theory community that random equal norm frames (i.e., sample from the sphere) are highly likely to be be nearly Parseval. Different versions of this statement are proved in~\cite{goyal1998quantized,bodmann2010road,kwok2019spectral}; see Section \ref{sec:related_work} for more details on related work. In  Theorem~\ref{concent_sphere}, we prove a version of this statement which originally appeared in~\cite{kwok2019spectral}: random unit norm frames are close to being Parseval with high probability, when the number of vectors in the frame is large---in fact, we arrived at this proof independently before being made aware of its existence in the literature, and the proof is included here for completeness and because it will be useful later in the paper. The proof relies on classical matrix concentration inequalities. Similar techniques allow us to establish concentration on Parseval frames for frames whose vector norms are not fully prescribed, but only bounded above (Theorem~\ref{concent_ball}). The tightness of these concentration bounds is explored via numerical experiments, included in the appendix.

\smallskip

\noindent \textbf{Concentration on Equal-Norm Frames.} We take the dual perspective in Section~\ref{sec:stiefel_manifold}, and consider concentration properties of random Parseval frames, considered as samples from a symmetric measure on the Stiefel manifold. We show in Theorem~\ref{thm:prevalence_epsH} that these frames are nearly equal-norm with high probability, assuming that the number of frame vectors is sufficiently large. The proof is more technical than the sphere case, and requires some estimates for geodesic distance and Ricci curvature of the Stiefel manifold. We also include numerical experiments illustrating this concentration result in the appendix.

\smallskip

\noindent \textbf{Application to the Paulsen Problem.} We give an application of our sphere concentration result to the Paulsen problem~\cite{bodmann2010road,casazza2013kadison,cahill2013paulsen,kwok2018paulsen,hamilton2021paulsen}. Roughly speaking, the Paulsen problem seeks a uniform bound on the squared distance from a frame that is \emph{$\varepsilon$-nearly equal-norm} and \emph{$\varepsilon$-nearly Parseval} (in a certain precise sense)—referred to briefly as an \emph{$\varepsilon$-nearly equal-norm Parseval frame}—to the subvariety of \emph{exactly} equal-norm Parseval frames. In Theorem~\ref{newpaulsen}, we derive an $O(\varepsilon^2 d)$ bound which holds with high probability for random nearly equal-norm and Parseval frames. Our bound is tighter than any of the deterministic bounds in the existing literature. In fact, we show in Proposition~\ref{prop:paulsen_bound} that any deterministic bound can be at best on the order of $O(\varepsilon d)$; this claim has been made in the literature, but to our knowledge no formal proof has appeared previously. This shows, in particular, that an optimal deterministic bound and our probabilistic bound are of different orders. Our proof is based on an extension of an example appearing in~\cite{ramachandran2021geodesic}. We note that similar probabilistic Parseval problem bounds, using a different sampling scheme, have appeared previously in~\cite{kwok2019spectral,ramachandran2021geodesic}; the distinction of our result is explained in the following subsection.

\subsection{Related Work}\label{sec:related_work}

Random frames have been studied in connection with quantization, erasures, uncertainty principles, and probabilistic constructions; see, e.g., \cite{goyal1998quantized,lyubarskii2010uncertainty,ehler2012random,vershynin2012introduction,bodmann2013random}. For instance, a basic model for random frames considers them as rectangular matrices with i.i.d. Gaussian entries. It follows from general measure concentration results that such a frame is highly likely to be nearly equal norm and nearly Parseval (after appropriate normalizations)---see \cite[Theorem 3.1.1 and 4.6.1]{vershynin2020high}, or related estimates in~\cite[Corollaries~3.2 and~3.5]{bodmann2013random}. More generally, Lyubarskii and Vershynin~\cite{lyubarskii2010uncertainty}, prove high-probability uncertainty principle estimates for random frame models, including subgaussian matrices, with applications to Kashin representations and vector quantization.

This paper is primarily concerned with random matrices sampled from distributions with additional structure imposed. There are several results in the literature which work with random frames whose vectors are sampled independently from a sphere, i.e., random unit-norm frames. Early work of Goyal, Vetterli, and Thao shows that the resulting frames are tight, asymptotically in the number of frame vectors~\cite[Theorem 1]{goyal1998quantized}. In the non-asymptotic regime, the earliest result we are aware of is due to Bodmann, who gives very general probabilistic guarantees on near-tightness for random fusion frames sampled from unitarily invariant distributions~\cite[Theorem 3.6]{bodmann2013random}; this specializes in the rank-1 case to essentially the setting of random unit norm frames. By working specifically with the case of unit norm frames, Kwok, Lau, and Ramachandran are able to derive much tighter probabilistic results~\cite{kwok2019spectral}. Theorem \ref{concent_sphere} below reproduces a result which can be found in the proof of~\cite[Lemma 5.4]{kwok2019spectral}, giving a non-asymptotic bound on the probability that a random unit norm frame is nearly Parseval. We include a proof of this result for the sake of completeness, and because Theorem \ref{concent_sphere} will be used in a crucial way to prove Theorem \ref{newpaulsen} later in the paper. Finally, we note that a stronger concentration result of this form is given in~\cite[Lemma A.13]{franks2020rigorous}, following from~\cite[Theorem 5.39]{vershynin2012introduction}, but these results involve unspecified constants. We prefer to work with fully explicit concentration bounds for the random models considered here, which allows one to examine the sharpness of these bounds through Monte Carlo simulations.

Rather than imposing the equal norm condition and studying the probability that the resulting frame is nearly Parseval, Theorem \ref{thm:prevalence_epsH} works in the opposite direction: we sample random Parseval frames by identifying the space of frames with the Stiefel manifold, endowed with Haar measure, and study the probability that the resulting frame is nearly equal norm. A related concentration phenomenon for random Parseval frames can also be inferred from the results of Hayden, Leung, and Winter~\cite{HaydenLeungWinter2006}. In particular, their concentration estimates for random projections imply, under the standard correspondence with Parseval frames, that the norm of each individual frame vector concentrates around the common equal-norm value. Our perspective is different: rather than controlling the deviation of the frame vectors from this value one at a time, we consider a frame chosen randomly from the space of Parseval frames and study its distance, as a whole, from the equal-norm condition. Thus, our result provides a global concentration statement for the frame rather than a vector-wise concentration estimate. This is in contrast to the more standard approach of deriving concentration results for a single frame then extending to the full frame via a union bound argument; see Remark \ref{remark:alternative_concentration_proof} for further discussion of the distinction.

The connection with the Paulsen problem was also considered in \cite{kwok2019spectral}. In that work, the authors obtain a high-probability \(O(\varepsilon^2 d)\)-type upper bound for the Paulsen distance in a random equal-norm model. Ramachandran later refined related results in his dissertation \cite{ramachandran2021geodesic}, using the result of \cite{franks2020rigorous}. These results are closely related to ours, but the underlying sampling models are different. The Paulsen problem concerns frames which are simultaneously \(\varepsilon\)-nearly equal-norm and \(\varepsilon\)-nearly Parseval. By contrast, the random frames considered in the works above are exactly equal-norm, rather than nearly equal-norm. Thus, the randomness is imposed on a smaller and more rigid class of frames than the one appearing naturally in the Paulsen problem. Our result addresses this distinction directly. We sample from the space of frames satisfying the two approximate Paulsen conditions themselves, and show that a high-probability \(O(\varepsilon^2 d)\) upper bound holds on this natural space. Moreover, our argument avoids the additional pseudorandomness and operator-scaling machinery used in some of the earlier works. Instead, we use a direct geometric approach: radial projection to the sphere, the explicit sphere concentration estimate from Theorem~\ref{concent_sphere}, the deterministic Hamilton--Moitra bound \cite{hamilton2021paulsen}, and the triangle inequality.

Ramachandran's dissertation \cite{ramachandran2021geodesic} also contains an \(O(\varepsilon d)\) upper bound for certain frames in a restricted range of parameters. This does not contradict the high-probability \(O(\varepsilon^2 d)\) estimate proved here. Rather, the two results apply in different regimes: the \(O(\varepsilon d)\) estimate is relevant when \(n\) is not too large relative to \(d\), whereas our \(O(\varepsilon^2 d)\) estimate holds with high probability once \(n\) is sufficiently large. To support the optimality of the \(O(\varepsilon d)\) behavior in the former regime, Ramachandran uses the \(d=2,n=4\) example appearing in Equation~(38) of \cite{casazza2012auto}.  In Proposition~\ref{prop:paulsen_bound}, we extend this construction to the regime \(n=2d\), showing that deterministic Paulsen bounds can be no better than order \(\Omega(\varepsilon d)\) in general.

\section*{Acknowledgements}

We would like to thank Akshay Ramachandran for providing detailed feedback earlier versions of the paper. TN was supported by NSF grants DMS 2324962 and CIF 2526630.

\section{Concentration of Parseval Frames}\label{concent_sphere_ball}

In this section, we establish concentration results for random vectors drawn from appropriately scaled spheres and balls. Our main results show that random frames drawn from these spaces are close to being Parseval with high probability, with respect to both the operator and Frobenius norms. We begin with a review of basic terminology and notation from frame theory.

\subsection{Frame Theoretic Concepts and Notation}

Throughout the paper, we consider $\R^d$ as an inner product space, endowed with its standard inner product $\langle \cdot, \cdot \rangle$ and norm $\|\cdot \|$. The adjoint (i.e., transpose) of a linear map $T:\R^d \to \R^n$ with respect to the standard inner products will be denoted by $T^\ast:\R^n \to \R^d$.

A finite sequence $(x_i)_{i=1}^n$ of vectors $x_i \in \R^d$ is called a \emph{frame} if it forms a spanning set for $\R^d$. Note that, for the sake of convenience, we consider a frame to be an ordered tuple of vectors (rather than an unordered set, which is an alternative convention).  

For any finite sequence of vectors $(x_i)_{i=1}^n$, we have the associated \emph{analysis operator} $T:\R^d \to \R^n$, defined by 
$T(x)=(\langle x,x_1\rangle,\dots,\langle x,x_n\rangle)$, and its adjoint, the \emph{synthesis operator} $T^\ast :\R^n \to \R^d$, $T^\ast (c_1,\dots,c_n)=\sum_{i=1}^n c_i x_i$. The \emph{frame operator} is $S =  T^*T: \R^d \to \R^d$, which we write as a sum of outer products $S = \sum_{i=1}^n x_i x_i^\ast$. In the context of signal processing, where $x \in \R^d$ is considered as a signal, application of the analysis operator amounts to taking linear measurements encoded by the sequence, and application of the synthesis operator amounts to reconstructing a signal from these measurements. A simple linear algebra exercise shows that a sequence is a frame if and only if its frame operator $S$ is invertible.

For frames in finite-dimensional spaces, there are certain natural classes of frames which are of particular interest. First, it is especially convenient when the frame operator is not just invertible, but is actually the identity map; in this case, the frame is said to be \emph{Parseval}. Second, one may wish for the vectors in the frame to be \emph{equal-norm}. If a frame $(x_i)_{i=1}^n$ in $\R^d$ is both Parseval and equal-norm, a simple calculation shows that the norm must be given by $\|x_i\|^2 = \frac{d}{n}$. 

In short, the results of this paper address the following question: if one of the desired properties above is enforced (e.g., the equal-norm condition), how likely is it that a random frame is close to satisfying the other property (e.g., Parseval)?

\subsection{Concentration for Spheres}

We begin by considering concentration properties for equal-norm frames. We use the notation $\mathbb{S}^{d-1}_R$ for the $(d-1)$-dimensional sphere of radius $R > 0$, considered as a submanifold of $\R^{d}$. We endow the sphere with the Riemannian metric induced from $\R^{d}$, and consider it as a measure space with normalized Riemannian volume; i.e., with the uniform probability measure. 

We consider concentration properties framed in terms of operator norms. For a linear operator on Euclidean space $A:\R^d \to \R^d$, we use $\|A\|_\mathrm{op}$ to denote its operator norm and $\|A\|_\mathrm{F}$ to denote its Frobenius norm. The main result of this section is the following.

\begin{theorem}[Concentration over the Sphere~\cite{kwok2019spectral}]\label{concent_sphere}
Let $(x_i)_{i=1}^n$ be a set of independent random vectors, drawn from the uniform distribution on $\mathbb{S}^{d-1}_{\sqrt{d/n}}$, and let  $S$ be the associated frame operator. Then, for every $\varepsilon \ge 0$,
\[
\mathbb{P}\!\left( \| S - I_d \|_{\mathrm{op}} \ge \varepsilon \right)
\;\le\;
2d \exp\!\left(
-\frac{n \varepsilon^2}
{\,2d - 2 + \tfrac{2\varepsilon(d-1)}{3}\,}
\right)
\]
and
\[
\mathbb P\!\left(\|S-I_d\|_{\mathrm F}\ge \varepsilon\right)
\le
2d \exp\!\left(
-\frac{n \varepsilon^2}
{\,2d^2 - 2d + \tfrac{2\varepsilon \sqrt{d}(d-1)}{3}\,}
\right).
\]
\end{theorem}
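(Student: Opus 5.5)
We will apply the matrix Bernstein inequality (Tropp's version for sums of independent, mean-zero, self-adjoint random matrices) to the summands
\[
X_i \;=\; x_i x_i^\ast - \tfrac{1}{n} I_d, \qquad i=1,\dots,n,
\]
so that $S - I_d = \sum_{i=1}^n X_i$. The inequality states that if $\mathbb{E} X_i = 0$, $\|X_i\|_{\mathrm{op}} \le L$ almost surely, and $\sigma^2 = \bigl\|\sum_i \mathbb{E} X_i^2\bigr\|_{\mathrm{op}}$, then
\[
\mathbb{P}\Bigl(\Bigl\|\sum_i X_i\Bigr\|_{\mathrm{op}} \ge t\Bigr) \le 2d \exp\Bigl(-\frac{t^2/2}{\sigma^2 + Lt/3}\Bigr).
\]
The plan is to compute $L$ and $\sigma^2$ exactly, substitute them, and simplify to obtain the first bound. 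The Frobenius bound will then follow from a norm comparison.

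\textbf{Mean zero.} Since $x_i = \sqrt{d/n}\,u_i$ with $u_i$ uniform on the unit sphere, rotation invariance gives $\mathbb{E}\, u_iu_i^\ast = \tfrac1d I_d$. Hence $\mathbb{E}\, x_ix_i^\ast = \tfrac1n I_d$ and $\mathbb{E} X_i = 0$.

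\textbf{Uniform bound $L$.} The matrix $X_i$ has eigenvalue $\tfrac dn - \tfrac1n = \tfrac{d-1}{n}$ in the direction of $x_i$ and eigenvalue $-\tfrac1n$ on the orthogonal complement. Therefore $L = \tfrac{d-1}{n}$ for $d \ge 2$.

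\textbf{Variance $\sigma^2$.} Using $(x_ix_i^\ast)^2 = \|x_i\|^2 x_ix_i^\ast = \tfrac dn x_ix_i^\ast$, we get
\[
\mathbb{E} X_i^2 = \tfrac dn \mathbb{E}\, x_ix_i^\ast - \tfrac2n \mathbb{E}\, x_ix_i^\ast + \tfrac1{n^2} I_d = \tfrac{d-1}{n^2} I_d,
\]
so $\sigma^2 = \tfrac{d-1}{n}$.

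\textbf{Substitution.} Plugging in gives the exponent
\[
-\frac{\varepsilon^2/2}{\tfrac{d-1}{n} + \tfrac{(d-1)\varepsilon}{3n}} \;=\; -\frac{n\varepsilon^2}{2d-2 + \tfrac{2\varepsilon(d-1)}{3}},
\]
which is exactly the first claim.

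\textbf{Frobenius bound.} Since $\|A\|_{\mathrm F} \le \sqrt d\,\|A\|_{\mathrm{op}}$, the event $\|S-I_d\|_{\mathrm F} \ge \varepsilon$ is contained in the event $\|S-I_d\|_{\mathrm{op}} \ge \varepsilon/\sqrt d$. Applying the first bound with $\varepsilon/\sqrt d$ gives the exponent
\[
-\frac{n\varepsilon^2/d}{2d-2+\tfrac{2\varepsilon(d-1)}{3\sqrt d}}.
\]
Multiplying numerator and denominator by $d$ yields the denominator $2d^2-2d+\tfrac{2\varepsilon\sqrt d(d-1)}{3}$, which matches the second statement.

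\textbf{Main obstacle.} The delicate points are bookkeeping rather than conceptual:
\begin{itemize}
\item using the sharp spectral bound $L=(d-1)/n$ instead of the crude triangle-inequality bound $d/n+1/n$;
\item computing the variance exactly via the identity $\|x_i\|^2 = d/n$;
\item checking that the matrix Bernstein constants (the factor $2d$ and the $t^2/2$ normalization) match the stated form.
\end{itemize}
The degenerate case $d=1$ makes both sides trivial and can be handled separately.
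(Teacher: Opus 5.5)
Your proposal is correct and follows the paper's proof essentially verbatim. Both apply matrix Bernstein to $x_ix_i^\ast - \tfrac1n I_d$ with $L=\sigma^2=\tfrac{d-1}{n}$, and then obtain the Frobenius bound from $\|A\|_{\mathrm F}\le\sqrt d\,\|A\|_{\mathrm{op}}$. Your explicit variance computation and the rescaling $\varepsilon\mapsto\varepsilon/\sqrt d$ spell out the details that the paper leaves as a ``direct computation.''
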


In the language of frame theory, this result has the following interpretation: for fixed $d$, a random equal-norm frame consisting of $n$ vectors is arbitrarily close to being Parseval with overwhelmingly high probability, provided $n$ is sufficiently large. This is concretely quantified in Corollary~\ref{cor:confidence_bounds_sphere_ball} below. As was explained in the introduction, this result appears in the proof of \cite[Lemma 5.4]{kwok2018paulsen}. Before being made aware of its existence in the literature, we had independently found a proof, using the same main tool (the Matrix Berstein Inequality, presented below). We provide a proof of the theorem for the sake of completeness, and because the result will be useful later on in the proof of Theorem \ref{newpaulsen}.

\begin{theorem}[Matrix Bernstein Inequality {\cite[Theorem 1.4]{tropp2012user}}]\label{bernstein}
Let $Y$ be a random self-adjoint $d \times d$ matrix such that $\mathbb{E}[Y]=0$ and $\|Y\|_{\mathrm{op}} \le A$ almost surely, for some $A>0$. Let $Y_1,\dots,Y_n$ be i.i.d.\ copies of $Y$. Then, for all $\varepsilon \ge 0$,
\[
\mathbb{P}\left(\left\|\sum_{i=1}^n Y_i\right\|_{\mathrm{op}} \ge \varepsilon \right)
\le 2d \cdot \exp\!\left( \frac{-{\varepsilon}^2/2}{\sigma^2 + A\varepsilon/3} \right),
\qquad
\text{where}\quad
\sigma^2 \coloneqq \left\|\sum_{i=1}^n \mathbb{E}[Y_i^2]\right\|_{\mathrm{op}}.
\]
\end{theorem}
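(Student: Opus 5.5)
The plan is the matrix Laplace transform method. Since $\left\|\sum_i Y_i\right\|_{\mathrm{op}} = \max\{\lambda_{\max}(\sum_i Y_i), \lambda_{\max}(-\sum_i Y_i)\}$, I will prove the one-sided bound
\[
\mathbb{P}\left(\lambda_{\max}\left(\sum_{i=1}^n Y_i\right) \ge \varepsilon\right) \le d\exp\left(\frac{-\varepsilon^2/2}{\sigma^2 + A\varepsilon/3}\right).
\]
I then apply it to the i.i.d. family $-Y_i$, which satisfies the same hypotheses with the same $A$ and $\sigma^2$. A union bound produces the factor $2d$.

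For the one-sided bound, fix $\theta>0$ and apply Markov's inequality to $e^{\theta\lambda_{\max}(\cdot)}$. Using $e^{\lambda_{\max}(M)} = \lambda_{\max}(e^{M}) \le \operatorname{tr} e^{M}$ for self-adjoint $M$, this gives
\[
\mathbb{P}\left(\lambda_{\max}\left(\sum_i Y_i\right)\ge\varepsilon\right)\le e^{-\theta\varepsilon}\,\mathbb{E}\operatorname{tr}\exp\left(\theta\sum_i Y_i\right).
\]
The key step is to decouple the expectation despite noncommutativity. For this I invoke Lieb's theorem: $H\mapsto \operatorname{tr}\exp(K+\log H)$ is concave on positive definite $H$. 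Combining it with Jensen's inequality and conditioning on one summand at a time (using independence) yields
\[
\mathbb{E}\operatorname{tr}\exp\left(\sum_i \theta Y_i\right)\le \operatorname{tr}\exp\left(\sum_i \log \mathbb{E}e^{\theta Y_i}\right).
\]

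Next I bound each matrix moment generating function. For $0<\theta<3/A$, I use the scalar inequality $e^{x}\le 1+x+\frac{x^2/2}{1-|x|/3}$, valid for $|x|<3$. It follows from the Taylor series via $k!\ge 2\cdot 3^{k-2}$. I apply it with $x=\theta\lambda$ for each eigenvalue $\lambda$ of $Y$, where $|\lambda|\le A$, so I may replace $|x|$ by $\theta A$ in the denominator. By the transfer rule (a scalar inequality on the spectrum gives a semidefinite inequality), and then taking expectations with $\mathbb{E}Y=0$, this gives
\[
\mathbb{E}e^{\theta Y}\preceq I+g(\theta)\,\mathbb{E}Y^2\preceq \exp\left(g(\theta)\,\mathbb{E}Y^2\right),\qquad g(\theta)=\frac{\theta^2/2}{1-\theta A/3}.
\]
Since the logarithm is operator monotone, $\log\mathbb{E}e^{\theta Y_i}\preceq g(\theta)\mathbb{E}Y_i^2$. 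The map $\operatorname{tr}\exp$ is monotone with respect to $\preceq$. Together with $\operatorname{tr}e^{M}\le d\,e^{\lambda_{\max}(M)}$, this yields
\[
\mathbb{P}\left(\lambda_{\max}\left(\sum_i Y_i\right)\ge\varepsilon\right)\le d\exp\left(-\theta\varepsilon+g(\theta)\sigma^2\right).
\]
Here I use $\lambda_{\max}(\sum_i\mathbb{E}Y_i^2)=\sigma^2$, since that sum is positive semidefinite.

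Finally I choose $\theta=\varepsilon/(\sigma^2+A\varepsilon/3)$, which lies in $(0,3/A)$. Then $1-\theta A/3=\sigma^2/(\sigma^2+A\varepsilon/3)$, so $g(\theta)\sigma^2=\theta\varepsilon/2$. The exponent becomes $-\theta\varepsilon/2=-\frac{\varepsilon^2/2}{\sigma^2+A\varepsilon/3}$, as claimed; the degenerate case $\sigma^2=0$ is trivial. The main obstacle is the decoupling step: the naive bound $\operatorname{tr}e^{A+B}\le\operatorname{tr}e^Ae^B$ (Golden--Thompson) works only for two summands. Lieb's concavity theorem is the genuinely deep input, and I would cite it rather than reprove it. The remaining steps are routine spectral calculus.
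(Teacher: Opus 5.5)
Your proposal is correct, and it is essentially the proof of the cited result. The paper gives no argument of its own and simply invokes Tropp's Theorem~1.4, whose proof is exactly your matrix Laplace transform argument: Lieb's concavity theorem gives subadditivity of the matrix cumulant generating functions, the Bernstein-type bound gives $\mathbb{E}e^{\theta Y}\preceq \exp(g(\theta)\,\mathbb{E}Y^2)$, and the choice $\theta=\varepsilon/(\sigma^2+A\varepsilon/3)$ finishes it. Tropp's Theorem~1.4 is the one-sided bound on $\lambda_{\max}$ with factor $d$, so your reduction via $-Y_i$ and a union bound is exactly what produces the two-sided, operator-norm form with factor $2d$ stated here.
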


\begin{proof}[Proof of Theorem~\ref{concent_sphere}]
Let $v = (v^{(1)},\ldots,v^{(d)})$ be a random vector drawn uniformly from the sphere $\mathbb{S}^{d-1}_{\sqrt{d/n}}$. By rotational invariance,
\[
\mathbb{E}[v^{(i)} v^{(j)}] = 0 \quad \text{for } i \neq j,
\qquad
\mathbb{E}[(v^{(i)})^2] = \frac{1}{n}, \qquad \text{and hence} \hspace{0.2cm} \mathbb{E}[v v^*] = \frac{1}{n} I_d.
\]

Let $(x_i)_{i=1}^n$ be independent random vectors with the same distribution as $v$, and define the frame operator \(S = \sum_{i=1}^n x_i x_i^*\). Then
\[
\mathbb{E}[S] = \sum_{i=1}^n \mathbb{E}[x_i x_i^*] = n \cdot \frac{1}{n} I_d = I_d.
\]

Define \(Y_i := x_i x_i^* - \frac{1}{n} I_d\). Then $\mathbb{E}[Y_i]=0$. Since $\|x_i\|^2=d/n$, the matrix $x_i x_i^*$ has eigenvalues $d/n$ and $0$ (with multiplicity $d-1$). Hence $Y_i$ has eigenvalues $\frac{d-1}{n}$ and $-\frac1n$ (with multiplicity $d-1$), so for $d\ge2$,
\[
\|Y_i\|_{\mathrm{op}}=\frac{d-1}{n}.
\]
Moreover, a direct computation shows that 
\[
\left\|\sum_{i=1}^n \mathbb{E}[Y_i^2]\right\|_{\mathrm{op}} = \frac{d-1}{n}.
\]
Applying Theorem~\ref{bernstein} yields the desired bound for the operator norm.

\medskip

The result in terms of the Frobenius norm follows directly from the relationship between the operator norm and the Frobenius norm:
\[
\sqrt{d}\,\|A\|_{\mathrm{op}} \geq \|A\|_F.
\]
\end{proof}

\begin{remark}
The same approach extends beyond the uniform sphere measure. In general, one may define
\[
Y_i := x_i x_i^* - \mathbb{E}[x_i x_i^*],
\]
so that \(\mathbb{E}[Y_i] = 0\). Thus, the concentration result in Theorem~\ref{bernstein} applies as long as the random vectors \(x_i\) satisfy $\mathbb{E}[x_i x_i^*] = \alpha I_d$ for some \(\alpha > 0\). In particular, this includes distributions supported on the sphere or the ball that are isotropic after appropriate normalization.
\end{remark}

\subsection{Concentration for Euclidean Balls}

The next main result is a concentration result for frames whose vectors are drawn from Euclidean balls. We use $\mathbb{B}^d_R$ to denote the closed Euclidean ball in $\R^d$ of radius $R > 0$. This is treated as a metric measure space with Euclidean metric and uniform probability measure. The main result of this subsection  is as follows.

\begin{theorem}[Concentration in the Ball]\label{concent_ball}

Let $(x_i)_{i=1}^n$ be a set of independent random vectors, drawn from the uniform distribution on  $\mathbb{B}_{\sqrt{\frac{d+2}{n}}}^d$ and let  $S$ be the associated frame operator. Then, for every $\varepsilon \ge 0$,
\[
\mathbb{P}\!\left(
\left\|S - I_d\right\|_{\mathrm{op}} \ge \varepsilon
\right)
\;\le\;
2d \exp\!\left(-\,\frac{n \varepsilon^2}{2d+2+\frac{2 \varepsilon(d+1)}{3}}\right)
\]
and
\[
\mathbb{P}\!\left( \| S - I_d \|_{\mathrm{F}} \ge \varepsilon \right)
\;\le\;
2d \exp\!\left(-\,\frac{n \varepsilon^2}{2d^2+2d+\frac{2 \varepsilon\sqrt{d}(d+1)}{3}}\right)
\].
\end{theorem}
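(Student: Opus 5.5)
We will follow the proof of Theorem~\ref{concent_sphere}, applying the Matrix Bernstein Inequality (Theorem~\ref{bernstein}) to the centered summands $Y_i := x_i x_i^* - \frac{1}{n} I_d$. Write $R^2 = \frac{d+2}{n}$ and let $v$ be uniform on $\mathbb{B}^d_R$.

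\textbf{Step 1: second moment.} We first compute $\mathbb{E}[vv^*]$. By rotational invariance it equals $\frac{1}{d}\mathbb{E}\|v\|^2 I_d$. The radial density of the uniform ball measure is proportional to $r^{d-1}$ on $[0,R]$, so
\[
\mathbb{E}\|v\|^2 = \frac{d}{d+2}R^2 = \frac{d}{n}.
\]
Hence $\mathbb{E}[vv^*] = \frac1n I_d$, which gives $\mathbb{E}[Y_i]=0$ and $\mathbb{E}[S]=I_d$. This computation is the reason for the radius $\sqrt{(d+2)/n}$.

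\textbf{Step 2: almost-sure bound.} The eigenvalues of $Y_i$ are $\|x_i\|^2 - \frac1n$, which lies in $[-\frac1n, \frac{d+1}{n}]$, and $-\frac1n$ with multiplicity $d-1$. Therefore
\[
\|Y_i\|_{\mathrm{op}} \le A := \frac{d+1}{n}.
\]

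\textbf{Step 3: variance.} Since $(x x^*)^2 = \|x\|^2 xx^*$, we have
\[
\mathbb{E}[Y_i^2] = \mathbb{E}[\|x_i\|^2 x_i x_i^*] - \frac{2}{n}\mathbb{E}[x_ix_i^*] + \frac{1}{n^2}I_d .
\]
Again by rotational invariance, $\mathbb{E}[\|v\|^2 vv^*] = \frac1d \mathbb{E}\|v\|^4 I_d$, and the radial density gives $\mathbb{E}\|v\|^4 = \frac{d}{d+4}R^4$. So
\[
\mathbb{E}[Y_i^2] = \Bigl(\frac{(d+2)^2}{(d+4)n^2} - \frac{1}{n^2}\Bigr) I_d .
\]
Summing over $i$,
\[
\sigma^2 = \frac{1}{n}\Bigl(\frac{(d+2)^2}{d+4}-1\Bigr) = \frac{1}{n}\cdot\frac{d^2+3d}{d+4}.
\]
This exact value is awkward, but it is at most $\frac{d+1}{n}$, since $d^2+3d \le (d+1)(d+4) = d^2+5d+4$. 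We will use this cleaner upper bound, which matches the denominator in the statement.

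\textbf{Step 4: conclude.} Plugging $\sigma^2\le \frac{d+1}{n}$ and $A=\frac{d+1}{n}$ into Theorem~\ref{bernstein}, and multiplying numerator and denominator by $2n$, gives
\[
\mathbb{P}\bigl(\|S-I_d\|_{\mathrm{op}}\ge\varepsilon\bigr) \le 2d\exp\!\left(-\frac{n\varepsilon^2}{2d+2+\tfrac{2\varepsilon(d+1)}{3}}\right),
\]
which is the operator-norm bound. For the Frobenius bound, note that $\|S-I_d\|_{\mathrm F}\ge\varepsilon$ implies $\|S-I_d\|_{\mathrm{op}}\ge \varepsilon/\sqrt d$. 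Substituting $\varepsilon/\sqrt d$ into the operator-norm bound and multiplying numerator and denominator by $d$ yields exactly the stated expression.

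The main obstacle is Step~3. The ball's radial randomness makes $\sigma^2$ a rational function of $d$ rather than the clean $\frac{d-1}{n}$ of the sphere case. The plan is to compute it exactly via the radial moments and then dominate it by $\frac{d+1}{n}$ as shown above.
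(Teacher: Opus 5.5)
Your proposal is correct and follows the paper's route. It applies the Matrix Bernstein inequality to $Y_i = x_i x_i^* - \frac1n I_d$ with $A = \frac{d+1}{n}$ and $\sigma^2 \le \frac{d+1}{n}$, with the radius chosen via Lemma~\ref{ball_radius} so that $\mathbb{E}[S]=I_d$, and then uses $\sqrt{d}\,\|\cdot\|_{\mathrm{op}} \ge \|\cdot\|_{\mathrm F}$ for the Frobenius bound. Your exact variance computation $\sigma^2 = \frac{d^2+3d}{(d+4)n}$ makes explicit a step the paper leaves implicit when it says to ``proceed exactly as in the sphere case.''
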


Sharper estimates could be derived from the methods of \cite{franks2020rigorous}, but these are largely based on~\cite[Theorem 5.39]{vershynin2012introduction}, which involves unspecified constants. The proof strategy presented below yields the explicit expressions in the theorem.

\begin{lemma}\label{ball_radius}
Let $v$ be a random vector uniformly distributed on the Euclidean ball $\mathbb{B}_r^d$.
Then
\[
\mathbb{E}[v v^*] = \frac{r^2}{d+2}\, I_d.
\]
In particular, choosing $r = \sqrt{\frac{d+2}{n}}$ ensures that $\mathbb{E}[v v^*] = \frac{1}{n}I_d$.
\end{lemma}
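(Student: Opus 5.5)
We prove Lemma~\ref{ball_radius} in two steps. First, rotational invariance shows that $\mathbb{E}[vv^*]$ is a scalar multiple of the identity. Second, a radial integral computes the trace, which fixes the scalar.

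\emph{Step 1: the matrix is scalar.} The uniform measure on $\mathbb{B}_r^d$ is invariant under every orthogonal map $Q \in O(d)$. Hence $Qv$ has the same distribution as $v$, and so
\[
Q\,\mathbb{E}[vv^*]\,Q^* = \mathbb{E}[vv^*] \qquad \text{for all } Q \in O(d).
\]
Now apply this with two kinds of orthogonal maps. Taking $Q$ to be the reflection flipping the sign of the $i$-th coordinate gives $\mathbb{E}[v^{(i)}v^{(j)}] = -\mathbb{E}[v^{(i)}v^{(j)}]$ for $i\ne j$, so the off-diagonal entries vanish. Taking $Q$ to be a coordinate permutation shows that all diagonal entries are equal. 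Therefore
\[
\mathbb{E}[vv^*] = c\, I_d, \qquad c = \tfrac{1}{d}\,\mathbb{E}\|v\|^2 .
\]

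\emph{Step 2: computing $\mathbb{E}\|v\|^2$.} We use polar coordinates. The volume of $\mathbb{B}_\rho^d$ is proportional to $\rho^d$, so the radius $\rho = \|v\|$ has density $d\rho^{d-1}/r^d$ on $[0,r]$. Consequently
\[
\mathbb{E}\|v\|^2 = \frac{d}{r^d}\int_0^r \rho^{d+1}\,d\rho = \frac{d\,r^2}{d+2}.
\]
Dividing by $d$ gives $c = r^2/(d+2)$, which is the claimed formula.

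\emph{Step 3: the specific radius.} Substituting $r^2 = (d+2)/n$ gives $c = 1/n$, so $\mathbb{E}[vv^*] = \frac{1}{n}I_d$.

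There is no real obstacle here. The only point requiring care is the radial density, which is where the factor $d+2$ comes from.
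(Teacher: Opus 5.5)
Your proof is correct and follows essentially the same route as the paper: rotational invariance reduces $\mathbb{E}[vv^*]$ to $\frac{1}{d}\mathbb{E}\|v\|^2\, I_d$, and a polar-coordinate computation gives $\mathbb{E}\|v\|^2 = \frac{d\,r^2}{d+2}$. The differences are minor. You justify the scalar form explicitly using sign-flip reflections and coordinate permutations. You also use the radial density $d\rho^{d-1}/r^d$ in place of the paper's ratio of integrals over the ball.
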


\begin{proof}
Since the uniform distribution on $\mathbb{B}_r^d$ is rotationally invariant, the matrix
$\mathbb{E}[v v^*]$ must be a scalar multiple of the identity.
In particular, all diagonal entries are equal, and we have $\mathbb{E}[v_i^2] = \frac{1}{d}\,\mathbb{E}[\|v\|^2]$. Let
\[
k = \frac{1}{d}\operatorname{tr}\bigl(\mathbb{E}[v v^*]\bigr)
= \frac{1}{d}\,\mathbb{E}[\|v\|^2].
\]
Then
\[
\mathbb{E}[v v^*] = k I_d.
\]

To compute $k$, note that
\[
k
= \frac{1}{d\,\operatorname{Vol}(\mathbb{B}_r^d)} \int_{\mathbb{B}_r^d} \|v\|^2 \, dV.
\]
Using polar coordinates $(\rho,\theta)$, where $\|v\| = \rho$ and
$dV = \rho^{d-1}\, d\rho \, d\theta$, we obtain
\[
\int_{\mathbb{B}_r^d} \|v\|^2 \, dV
= \int_0^{r} \int_{S^{d-1}} \rho^2 \rho^{d-1} \, d\theta \, d\rho
= \left( \int_0^{r} \rho^{d+1}\, d\rho \right)\operatorname{Vol}(S^{d-1})
= \frac{r^{d+2}}{d+2}\operatorname{Vol}(S^{d-1}).
\]
On the other hand, the volume of $\mathbb{B}_r^d$ is given by
\[
\operatorname{Vol}(\mathbb{B}_r^d)
= \int_0^{r} \rho^{d-1}\, d\rho \cdot \operatorname{Vol}(S^{d-1})
= \frac{r^d}{d}\operatorname{Vol}(S^{d-1}).
\]
Substituting these expressions yields
\[
k
= \frac{\frac{r^{d+2}}{d+2}\operatorname{Vol}(S^{d-1})}
{d \cdot \frac{r^d}{d}\operatorname{Vol}(S^{d-1})}
= \frac{r^2}{d+2}.
\]
Consequently,
\[
\mathbb{E}[v v^*] = \frac{r^2}{d+2}\, I_d.
\]
Choosing $r = \sqrt{\frac{d+2}{n}}$ makes the expected value $\frac{1}{n}I_d$.
\end{proof}

\begin{proof}[Proof of Theorem~\ref{concent_ball}]
For the concentration result in the operator norm, we proceed exactly as in the corresponding part of the proof of Theorem~\ref{concent_sphere}. By Lemma~\ref{ball_radius}, the radius of the ball must be \(\sqrt{(d+2)/n}\) in order to ensure that \(\mathbb{E}[S] = I_d\). For the Frobenius norm, we proceed in the same way as in Theorem~\ref{concent_sphere}.
\end{proof}

We conclude this section with a corollary that quantifies our concentration in terms of increasing $n$ (the number of vectors in the frames). The proof follows immediately from the main theorems.

\begin{corollary}\label{cor:confidence_bounds_sphere_ball}
Let \(\delta \in (0,1)\) and \(\varepsilon > 0\). Then, for both the sphere and ball models, there exists a constant $C_{d,\varepsilon}$ (depending on dimension $d$ and tolerance $\varepsilon$) such that if
\[
n \;\ge\; C_{d,\varepsilon}\,\log\!\left(\frac{2d}{\delta}\right),
\]
then
\[
\mathbb{P}\bigl(\|S-I_d\|_{\mathrm{op}} \ge \varepsilon\bigr)\le \delta.
\]

In particular:
\begin{itemize}
    \item To guarantee a confidence level of \(95\%\), it suffices to take
    \[
    n \;\ge\; C_{d,\varepsilon}\,\log(40d).
    \]
    \item To guarantee a confidence level of \(99\%\), it suffices to take
    \[
    n \;\ge\; C_{d,\varepsilon}\,\log(200d).
    \]
\end{itemize}

Here one may take
\[
C_{d,\varepsilon}=
\frac{2d-2+\frac{2\varepsilon(d-1)}{3}}{\varepsilon^2}
\]
for the sphere model, and
\[
C_{d,\varepsilon}=
\frac{2d+2+\frac{2\varepsilon(d+1)}{3}}{\varepsilon^2}
\]
for the ball model.

An analogous statement holds for the Frobenius norm.
\end{corollary}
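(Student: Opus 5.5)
The corollary follows by inverting the tail bounds of Theorem~\ref{concent_sphere} and Theorem~\ref{concent_ball}. For the sphere model, the operator-norm estimate reads
\[
\mathbb{P}\bigl(\|S-I_d\|_{\mathrm{op}}\ge\varepsilon\bigr)\le 2d\exp\!\left(-\frac{n}{C_{d,\varepsilon}}\right),
\qquad
C_{d,\varepsilon}=\frac{2d-2+\frac{2\varepsilon(d-1)}{3}}{\varepsilon^2}.
\]
This holds because the exponent $-n\varepsilon^2/(2d-2+\tfrac{2\varepsilon(d-1)}{3})$ equals $-n/C_{d,\varepsilon}$ by definition of $C_{d,\varepsilon}$. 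The same rewriting applies to the ball model, using the constant with $d+1$ in place of $d-1$ coming from Theorem~\ref{concent_ball}.

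We then require the right-hand side to be at most $\delta$. Since $\delta\in(0,1)$, the condition $2d\exp(-n/C_{d,\varepsilon})\le\delta$ is equivalent, after taking logarithms (which preserve order), to $n\ge C_{d,\varepsilon}\log(2d/\delta)$. This gives the main statement. The two special cases are obtained by substituting $\delta=0.05$ and $\delta=0.01$:
\[
\frac{2d}{0.05}=40d, \qquad \frac{2d}{0.01}=200d.
\]

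For the Frobenius norm, we repeat the same argument using the second inequality in each theorem. The constants become $(2d^2-2d+\tfrac{2\varepsilon\sqrt d(d-1)}{3})/\varepsilon^2$ for the sphere and $(2d^2+2d+\tfrac{2\varepsilon\sqrt d(d+1)}{3})/\varepsilon^2$ for the ball.

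None of these steps presents an obstacle. The only point requiring care is to check that each constant is positive, so that the inequality $n\ge\cdot$ is meaningful. This holds for $d\ge2$ and $\varepsilon>0$. In the ball model the constant is positive even for $d=1$.
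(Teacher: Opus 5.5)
Your proposal is correct and is exactly the paper's argument. The paper only remarks that the corollary follows immediately from Theorems~\ref{concent_sphere} and~\ref{concent_ball}. Your rewriting of each exponent as $-n/C_{d,\varepsilon}$ and solving $2d\exp(-n/C_{d,\varepsilon})\le\delta$ for $n$ supplies that omitted computation, including the Frobenius constants and the substitutions $\delta=0.05$ and $\delta=0.01$.
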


\section{Concentration of Equal-Norm Frames}\label{sec:stiefel_manifold}

In the previous section, we showed that random collections of equal-norm vectors concentrate around Parseval frames. In this section, we prove a complementary result: random Parseval frames concentrate around equal-norm frames. More precisely, we show that if a Parseval frame is chosen at random, then the norms of its frame vectors are close to the common norm of an equal-norm Parseval frame, namely \(\sqrt{d/n}\), with high probability.

A Parseval frame \((x_i)_{i=1}^n \subset \mathbb{R}^d\) can be identified with a matrix \(U \in \mathbb{R}^{n\times d}\) whose rows are the vectors \(x_i\). Such a matrix thus satisfies the condition \(U^*U=I_d\), so that  Parseval frames correspond to the points of the \emph{Stiefel manifold} 
\begin{equation}\label{eqn:stiefel_manifold}
\mathrm{St}(n,d) \coloneqq \{U \in \R^{n \times d} \mid U^\ast U = I_d\}.
\end{equation}
We note that the correspondence between Parseval frames and the Stiefel manifold has appeared previously in the frame theory literature, e.g.,~\cite{Dykema:2006ux,needham2021symplectic}.  Throughout this section, we equip \(\mathrm{St}(n,d)\) with its canonical Riemannian metric (described in the following subsection) and the associated normalized Riemannian volume measure, which we refer to as the \emph{symmetric measure}.

We can now state the main result of this section.

\begin{theorem}[Concentration over the Stiefel Manifold]
\label{thm:prevalence_epsH}
Let $(x_i)_{i=1}^n$ be a random Parseval frame in $\R^d$, drawn from the symmetric measure. Let $\varepsilon > 0$, assume that $n>d\geq 2$, and that $n$ is sufficiently large so that
\[
\frac{3\sqrt{17}}{4}
\sqrt{\frac{d+\ln n}{n}}
< \varepsilon.
\]
Then we have
\[
\mathbb{P}\!\left(
\exists\, i \in \{1,\dots,n\}
\;\text{such that}\;
\bigl|\,\|x_i\|-\sqrt{d/n}\,\bigr|
\ge \varepsilon
\right)
\le
\exp\!\left(
-\frac{(n-2)}{8}
\left(
\varepsilon-
\frac{3\sqrt{17}}{4}
\sqrt{\frac{d+\ln n}{n}}
\right)^2
\right).
\]
\end{theorem}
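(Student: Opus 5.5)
The plan is to apply Gromov--Lévy / Bakry--Émery concentration on $\mathrm{St}(n,d)$ to the single Lipschitz function
\[
F(U) \coloneqq \max_{1\le i\le n} \bigl|\,\|x_i\| - \sqrt{d/n}\,\bigr|,
\]
where $x_i$ is the $i$-th row of $U$. The event in the theorem is exactly $\{F\ge\varepsilon\}$. The argument has three ingredients: a lower bound on the Ricci curvature of $\mathrm{St}(n,d)$ with its canonical metric, a Lipschitz bound for $F$ with respect to the geodesic distance, and an upper bound on $\mathbb{E}[F]$ (or on a median of $F$). Concentration then gives
\[
\mathbb{P}\bigl(F\ge \mathbb{E}F + t\bigr)\le \exp\!\left(-\frac{\kappa t^2}{2L^2}\right),
\]
where $\kappa$ is the Ricci lower bound and $L$ the Lipschitz constant. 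Finally I would set $t=\varepsilon-\mathbb{E}F$. The hypothesis $\frac{3\sqrt{17}}{4}\sqrt{(d+\ln n)/n}<\varepsilon$ is precisely what makes $t>0$, provided I show $\mathbb{E}F\le \frac{3\sqrt{17}}{4}\sqrt{(d+\ln n)/n}$. Matching the constant $\frac{n-2}{8}$ suggests $\kappa=\frac{n-2}{4}$ or so and $L=1$, up to how the canonical metric is normalized.

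\textbf{Curvature and Lipschitz steps.} $\mathrm{St}(n,d)=O(n)/O(n-d)$ is a normal homogeneous space. Using O'Neill's formula for the Riemannian submersion $O(n)\to \mathrm{St}(n,d)$, where $O(n)$ carries the bi-invariant metric with Ricci curvature $\frac{n-2}{4}$ in the normalization $\langle X,Y\rangle=\tfrac12\operatorname{tr}(X^\ast Y)$, I would argue that Ricci curvature does not decrease under submersion. This gives $\operatorname{Ric}_{\mathrm{St}}\ge \frac{n-2}{4}$. I would double-check the canonical-metric conventions against the constant $(n-2)/8$. For the Lipschitz bound, each map $U\mapsto \|e_i^\ast U\|$ is $1$-Lipschitz for the Frobenius (chordal) distance. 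The canonical metric dominates the Euclidean one up to a factor (at most $\sqrt2$, depending on convention), and the geodesic distance dominates the chordal distance. A maximum of $L$-Lipschitz functions is $L$-Lipschitz, so $F$ is Lipschitz with a constant I would fix carefully so that the final exponent comes out as stated.

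\textbf{Expectation bound (main obstacle).} The hard part is bounding $\mathbb{E}F$. By $O(n)$-invariance, each row $x_i$ is distributed as the first $d$ coordinates of a uniform unit vector in $\mathbb{R}^n$. I would write $\|x_i\|^2=\frac{\|g_{1:d}\|^2}{\|g\|^2}$ with $g\sim N(0,I_n)$, or apply Lévy concentration on $\mathbb{S}^{n-1}$ to the $1$-Lipschitz map $u\mapsto\|u_{1:d}\|$. This gives a sub-Gaussian tail
\[
\mathbb{P}\bigl(\bigl|\|x_i\|-m\bigr|\ge s\bigr)\le 2e^{-(n-2)s^2/2},
\]
with $|m-\sqrt{d/n}|\lesssim 1/\sqrt n$. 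For the maximum of $n$ such variables, the standard estimate $\mathbb{E}\max\le \sqrt{2\sigma^2\ln(2n)}$ via the moment generating function and Jensen yields a $\sqrt{\ln n/n}$ term. The $\sqrt{d/n}$ term would come from crudely absorbing the centering error $|m-\sqrt{d/n}|$, or alternatively from bounding $\mathbb{E}\bigl|\|x_i\|-\sqrt{d/n}\bigr|$ directly. Combining $\sqrt a+\sqrt b\le\sqrt{2(a+b)}$ and tracking constants should produce $\frac{3\sqrt{17}}{4}\sqrt{(d+\ln n)/n}$. I expect the bookkeeping here, especially making the constants $\frac{3\sqrt{17}}{4}$ and $n-2$ line up exactly, to be the main difficulty. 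If the numbers do not match, I would adjust by using a median instead of the mean, or by working with $n-1$ in place of $n-2$, since the qualitative argument is unaffected.
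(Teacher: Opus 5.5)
Your argument is sound and uses the paper's skeleton: Theorem~\ref{ricci} applied once to a global max-type function, with a Ricci lower bound and a Frobenius-to-geodesic comparison. The test function and the bound on its mean are obtained differently.

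The paper uses the uncentered $f(U)=\max_i\|u_i\|$. It notes that the hypothesis forces $\varepsilon>\sqrt{d/n}$, so the lower-deviation event is empty. It then bounds $\mathbb{E}f$ by Tropp's Gaussian comparison (Theorem~\ref{sublinear}) plus the Laurent--Massart $\chi^2_d$ tail. That comparison needs $f$ nonnegative, convex and sublinear, and your centered $F$ is not, so you must argue differently. Your plan does work:
\begin{itemize}
\item Each row is the coordinate projection of a uniform point of $\mathbb{S}^{n-1}$.
\item The Herbst form of concentration on the sphere ($\mathrm{Ric}=n-2$) gives $\mathbb{E}e^{\lambda(\|x_i\|-m)}\le e^{\lambda^2/(2(n-2))}$. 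Hence $\mathbb{E}\max_i\bigl|\|x_i\|-m\bigr|\le\sqrt{2\ln(2n)/(n-2)}$, with no independence needed.
\item Jensen and the spectral gap $n-1$ of the sphere give $0\le\sqrt{d/n}-m\le\sqrt{\operatorname{Var}\|x_i\|}\le 1/\sqrt{n-1}$.
\end{itemize}
The resulting bound $\mathbb{E}F\le\sqrt{2\ln(2n)/(n-2)}+1/\sqrt{n-1}$ lies below $\frac{3\sqrt{17}}{4}\sqrt{(d+\ln n)/n}$ for every $n\ge 3$, $d\ge 2$. Even the crude centering bound $\sqrt{d/n}$ suffices, so you do not need medians or $n-1$ in place of $n-2$.

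Your route is self-contained, with no external comparison theorem. It treats both tails directly and yields a dimension-free threshold, i.e.\ a stronger result. The paper's route avoids row marginals and MGF bounds. However, the $\sqrt{d/n}$ in its threshold is intrinsic to the uncentered $f$, since $\mathbb{E}f\ge\mathbb{E}\|u_1\|\ge\sqrt{d/n}-1/\sqrt{n-1}$.

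You do need to fix the curvature bookkeeping. Take $\langle X,Y\rangle=\frac12\operatorname{tr}(X^*Y)$ on $\mathfrak{o}(n)$, the normalization that induces the canonical metric $\frac12\|A\|^2+\|B\|^2$. Then $\mathrm{Ric}_{O(n)}=\frac{n-2}{2}$, not $\frac{n-2}{4}$; the latter value belongs to $\operatorname{tr}(X^*Y)$. Also, Ricci curvature need not increase under a general Riemannian submersion. It does here because the fibres of $O(n)\to O(n)/O(n-d)$ are totally geodesic, so O'Neill gives $\mathrm{Ric}(X,X)=\mathrm{Ric}_{O(n)}(\tilde X,\tilde X)+2|A_{\tilde X}|^2$. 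This yields $\mathrm{Ric}\ge\frac{n-2}{2}$, matching Lemma~\ref{lem:ricci-stiefel}. Combined with $L=\sqrt2$ from $g_C(\Delta,\Delta)\ge\frac12\|\Delta\|_F^2$, you get exactly the exponent $\frac{n-2}{8}$.

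Only the scale-invariant ratio $\kappa/L^2=\frac{n-2}{4}$ matters. Your pair $(\frac{n-2}{4},1)$ is the consistent one for the metric $2g_C$, which has the same normalized volume. In fact $L=1$ already holds for $g_C$. Row norms are invariant under $U\mapsto UQ$ with $Q\in O(d)$, so their differentials kill the $UA$ component and are bounded by $\|B\|_F\le g_C(\Delta,\Delta)^{1/2}$. With $\kappa=\frac{n-2}{2}$, this would improve the exponent to $\frac{n-2}{4}$.
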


\begin{remark}
    The theorem shows that random Parseval frames concentrate on frames whose norms are all $\varepsilon$-close to $\sqrt{d/n}$, in an additive sense. Frequently in frame theory, an alternative notion of $\varepsilon$-nearly equal-norm frames, based on relative error, is considered. This notion is considered in Section~\ref{sec:paulsen_problem}, in relation to the Paulsen problem.
\end{remark}

In analogy with Corollary \ref{cor:confidence_bounds_sphere_ball}, the theorem immediately yields a quantification of concentration in $n$.

\begin{corollary}\label{confidence}
Fix $\varepsilon > 0$. Under the assumptions of 
Theorem~\ref{thm:prevalence_epsH}, suppose further that
\[
\varepsilon
\geq
\frac{3\sqrt{17}}{2}
\sqrt{\frac{d+\ln n}{n}}.
\]
If
\[
n \ge 2+\frac{32\ln(20)}{\varepsilon^2},
\]
then the conclusion of Theorem~\ref{thm:prevalence_epsH} holds with
confidence at least \(95\%\). If
\[
n \ge 2+\frac{32\ln(100)}{\varepsilon^2},
\]
then the conclusion holds with confidence at least \(99\%\).
\end{corollary}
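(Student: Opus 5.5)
The plan is to derive the corollary directly from Theorem~\ref{thm:prevalence_epsH}, treating that theorem as a black box. I will not revisit the Stiefel-manifold geometry.

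Set
\[
\beta \coloneqq \frac{3\sqrt{17}}{4}\sqrt{\frac{d+\ln n}{n}}, \qquad t \coloneqq \varepsilon-\beta .
\]
The corollary's extra hypothesis reads $\varepsilon \ge 2\beta$. Since $d\ge 2$ and $n>d$, we have $\beta>0$, so this hypothesis gives $\varepsilon > \beta$. Hence the standing assumption of Theorem~\ref{thm:prevalence_epsH} holds and the theorem applies. Moreover, $\beta\le \varepsilon/2$ gives $t\ge \varepsilon/2>0$, and therefore
\[
\mathbb{P}\Bigl(\exists\, i:\ \bigl|\,\|x_i\|-\sqrt{d/n}\,\bigr|\ge\varepsilon\Bigr)\le \exp\!\Bigl(-\tfrac{n-2}{8}t^2\Bigr)\le \exp\!\Bigl(-\tfrac{(n-2)\varepsilon^2}{32}\Bigr).
\]
This uses that $x\mapsto e^{-cx^2}$ is decreasing for $x\ge0$ and that $n-2>0$, which holds because $n>d\ge2$.

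For the confidence levels, the bound $\exp(-(n-2)\varepsilon^2/32)\le\delta$ is equivalent to $n\ge 2+\frac{32\ln(1/\delta)}{\varepsilon^2}$. Taking $\delta=1/20$ gives the $95\%$ statement, and $\delta=1/100$ gives the $99\%$ statement, matching exactly the displayed thresholds on $n$. Here ``conclusion holds with confidence'' means that the complementary event, namely $\bigl|\,\|x_i\|-\sqrt{d/n}\,\bigr|<\varepsilon$ for all $i$, has probability at least $1-\delta$.

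There is no real obstacle. The only point to watch is that the condition $\varepsilon\ge 2\beta$ itself depends on $n$, so it is assumed jointly with the lower bound on $n$ rather than derived from it. The corollary is correctly stated as a conjunction of the two hypotheses, and I will just note this.
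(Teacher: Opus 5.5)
Your proposal is correct and matches the paper, which gives no written proof and simply says the corollary follows immediately from Theorem~\ref{thm:prevalence_epsH}. The hypothesis $\varepsilon \ge 2\beta$ gives $\varepsilon-\beta \ge \varepsilon/2$, so the tail bound is at most $\exp(-(n-2)\varepsilon^2/32)$, and solving $\exp(-(n-2)\varepsilon^2/32)\le\delta$ for $\delta = 1/20$ and $\delta = 1/100$ gives exactly the stated thresholds on $n$.
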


The proof of Theorem~\ref{thm:prevalence_epsH} is geometric. It relies on a concentration inequality on manifolds together with two key ingredients: an upper bound for the expectation of a suitable Lipschitz function, and a lower bound on the Ricci curvature of \(\mathrm{St}(n,d)\).

\begin{theorem}[\cite{ledoux2001concentration}]\label{ricci}
Let \(M\) be a compact, connected Riemannian manifold with \(\mathrm{Ric}\ge \kappa>0\), and let \(f:M\to\mathbb{R}\) be Lipschitz with constant \(\mathrm{Lip}(f)\). If \(X\) is distributed according to the normalized Riemannian volume measure on \(M\), then for all \(t>0\),
\[
\mathbb{P}\!\left(f(X)-\mathbb{E}[f(X)]\ge t\right)
\le
\exp\!\left(-\frac{\kappa t^2}{2\,\mathrm{Lip}(f)^2}\right).
\]
\end{theorem}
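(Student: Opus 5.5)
The plan is the semigroup (Bakry--\'Emery) route to a logarithmic Sobolev inequality, followed by the Herbst argument. By homogeneity it suffices to treat $\mathrm{Lip}(f)=1$: the general case follows by applying the result to $f/\mathrm{Lip}(f)$ with $t/\mathrm{Lip}(f)$ in place of $t$. We also first treat smooth $f$. A Lipschitz $f$ is differentiable almost everywhere (Rademacher) with $|\nabla f|\le \mathrm{Lip}(f)$. It can be approximated uniformly on the compact manifold $M$ by smooth functions with $|\nabla f_k|\le \mathrm{Lip}(f)+\delta$, for example by Greene--Wu smoothing. Uniform convergence lets us pass to the limit in the tail bound, and afterwards $\delta\to 0$.

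\textbf{Step 1: log-Sobolev inequality.} Let $\mu$ be the normalized volume measure and $P_t=e^{t\Delta}$ the heat semigroup. Since $M$ is compact and connected, $P_t g\to \int g\,d\mu$ as $t\to\infty$. Bochner's formula together with $\mathrm{Ric}\ge\kappa$ gives the curvature-dimension condition $\Gamma_2(g)\ge \kappa\,\Gamma(g)$, where $\Gamma(g)=|\nabla g|^2$. The standard interpolation argument, differentiating $s\mapsto P_s\big(\Gamma(P_{t-s}g)\big)$, then yields the gradient commutation
\[
|\nabla P_t g|\le e^{-\kappa t}P_t|\nabla g| .
\]
Next write the entropy of a positive function $h$ as an integral along the heat flow:
\[
\mathrm{Ent}_\mu(h)=\int_0^\infty \int \frac{|\nabla P_t h|^2}{P_t h}\,d\mu\,dt .
\]
Bound the integrand using the commutation estimate and Cauchy--Schwarz in the form $(P_t|\nabla h|)^2\le P_t h\cdot P_t(|\nabla h|^2/h)$. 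Integrating $e^{-2\kappa t}$ over $t\in(0,\infty)$ gives
\[
\mathrm{Ent}_\mu(g^2)\le \frac{2}{\kappa}\int |\nabla g|^2\,d\mu .
\]
I expect this step to be the main technical obstacle. In particular, the differentiation under the integral and the $t\to\infty$ limit need care, although compactness makes them harmless.

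\textbf{Step 2: Herbst argument.} Let $H(\lambda)=\mathbb{E}[e^{\lambda f}]$ for $\lambda>0$ and apply the log-Sobolev inequality to $g=e^{\lambda f/2}$. Using $|\nabla g|^2=\tfrac{\lambda^2}{4}|\nabla f|^2 e^{\lambda f}\le \tfrac{\lambda^2}{4}e^{\lambda f}$, we get
\[
\lambda H'(\lambda)-H(\lambda)\log H(\lambda)\le \frac{\lambda^2}{2\kappa}H(\lambda),
\]
which says $\frac{d}{d\lambda}\big(\lambda^{-1}\log H(\lambda)\big)\le \frac{1}{2\kappa}$. As $\lambda\to 0^+$ we have $\lambda^{-1}\log H(\lambda)\to\mathbb{E}f$, so integrating from $0$ gives
\[
\log H(\lambda)\le \lambda\,\mathbb{E}f+\frac{\lambda^2}{2\kappa}.
\]

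\textbf{Step 3: Chernoff bound.} Markov's inequality applied to $e^{\lambda(f-\mathbb{E}f)}$ gives
\[
\mathbb{P}(f-\mathbb{E}f\ge t)\le \exp\!\Big(-\lambda t+\frac{\lambda^2}{2\kappa}\Big).
\]
Choosing $\lambda=\kappa t$ yields $\exp(-\kappa t^2/2)$. Undoing the normalization $\mathrm{Lip}(f)=1$ gives the stated bound $\exp\big(-\kappa t^2/(2\,\mathrm{Lip}(f)^2)\big)$.
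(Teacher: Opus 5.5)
The paper gives no proof of this statement; it is quoted from Ledoux. Your route (Bakry--\'Emery log-Sobolev inequality with constant $2/\kappa$, then the Herbst argument, then Chernoff) is the argument behind that citation. Your reduction to $\mathrm{Lip}(f)=1$, the smoothing, the entropy-along-the-flow formula, the Herbst differential inequality and the choice $\lambda=\kappa t$ are all correct.

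The genuine gap is in how you obtain the gradient commutation in Step 1. Differentiating $\phi(s)=P_s\big(\Gamma(P_{t-s}g)\big)$ gives $\phi'(s)=2P_s\big(\Gamma_2(P_{t-s}g)\big)\ge 2\kappa\,\phi(s)$. This yields only the $L^2$ bound $|\nabla P_tg|^2\le e^{-2\kappa t}P_t\big(|\nabla g|^2\big)$. Since $(P_t|\nabla g|)^2\le P_t\big(|\nabla g|^2\big)$, this is weaker than the $L^1$ bound $|\nabla P_tg|\le e^{-\kappa t}P_t|\nabla g|$ that you claim. Your Cauchy--Schwarz step needs the $L^1$ form: it is $(P_t|\nabla h|)^2$, not $P_t\big(|\nabla h|^2\big)$, that is bounded by $P_th\cdot P_t\big(|\nabla h|^2/h\big)$. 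With only the $L^2$ bound, Step 1 does not close. What is missing is the Hessian term you discarded from Bochner's formula when you passed to $\Gamma_2\ge\kappa\Gamma$.

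Either of two standard repairs works.

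(i) Keep the Hessian. Bochner gives $\Gamma_2(u)=\|\nabla^2u\|^2+\mathrm{Ric}(\nabla u,\nabla u)$. Since $\nabla|\nabla u|^2=2\nabla^2u(\nabla u,\cdot)$, one has $|\nabla\Gamma(u)|^2\le 4\|\nabla^2u\|^2\,\Gamma(u)$. Hence $\Gamma_2(u)\ge\kappa\Gamma(u)+|\nabla\Gamma(u)|^2/\big(4(\Gamma(u)+\eta)\big)$ for every $\eta>0$. Feed this into the derivative of $\psi(s)=P_s\big(\sqrt{\Gamma(P_{t-s}g)+\eta}\big)$ to get $\psi'\ge\kappa\psi-\kappa\sqrt{\eta}$. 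Integrating gives $\sqrt{\Gamma(P_tg)+\eta}\le e^{-\kappa t}P_t\sqrt{\Gamma(g)+\eta}+\sqrt{\eta}$, and letting $\eta\to 0$ yields the $L^1$ commutation.

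(ii) Bypass pointwise commutation. The Fisher information $I(t)=\int|\nabla P_th|^2/P_th\,d\mu$ satisfies $I'(t)=-2\int P_th\,\Gamma_2(\log P_th)\,d\mu\le-2\kappa I(t)$. This needs only $\Gamma_2\ge\kappa\Gamma$ applied to $\log P_th$. Then $\mathrm{Ent}_\mu(h)=\int_0^\infty I(t)\,dt\le I(0)/(2\kappa)$ is exactly your log-Sobolev inequality.

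With either repair, the rest of your proof goes through unchanged.
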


The rest of this section is devoted to the proof of Theorem \ref{thm:prevalence_epsH}, which is more technical than the proofs of the concentration results presented in the previous section. We note that a concentration result can be derived somewhat straightforwardly from Theorem \ref{ricci} together with a union bound. However, the proof of Theorem \ref{thm:prevalence_epsH} avoids the use of a union bound and thereby yields a stronger concentration estimate; see Remark \ref{remark:alternative_concentration_proof} for details.

\subsection{Geometric Ingredients}

Since the main result follows directly from Theorem~\ref{ricci}, we need two geometric ingredients: the geodesic distance and the Ricci curvature for the Stiefel manifold. For the geodesic distance, it is enough for our purposes to understand its relationship with the Frobenius norm. Indeed, we will define a function whose Lipschitz constant can be estimated easily with respect to the Frobenius norm, and the comparison between the Frobenius norm and the geodesic distance will then allow us to obtain the required Lipschitz constant. For the Ricci curvature, it will be sufficient to establish an appropriate lower bound.

Let us recall some basic geometric structures of the Stiefel manifold; a  standard reference for details is~\cite{edelman1998geometry}.  The tangent space at \( U \in \mathrm{St}(n, d) \) is given by
\[
T_U \mathrm{St}(n, d) = \left\{ \Delta \in \mathbb{R}^{n \times d} \;\middle|\; U^* \Delta + \Delta^* U = 0 \right\}.
\]
Equivalently, any tangent vector \( \Delta \in T_U \mathrm{St}(n, d) \) can be expressed in the form
\[
\Delta = UA + U_\perp B, 
\]
where $A$ is a skew-symmetric $d \times d$ matrix, $B \in \R^{(n-d) \times d}$ is an arbitrary matrix, and  \( U_\perp \in \mathbb{R}^{n \times (n - d)} \) is an orthonormal basis for the complement of the span of the columns of \( U \). The canonical Riemannian metric is defined  at \( U \in \mathrm{St}(n, d) \) as the map
\begin{align*}
    T_U \mathrm{St}(n,d) \times T_U \mathrm{St}(n,d) &\to \R \\
    (\Delta_1, \Delta_2) &\mapsto \mathrm{Tr} \left( \Delta_1^* \left( I_n - \frac{1}{2} U U^* \right) \Delta_2 \right).
\end{align*}
By general principles, this Riemannian metric yields a volume measure, which we normalize to define the \emph{symmetric measure}, used in the statement of Theorem \ref{thm:prevalence_epsH}.
We use \(d_C\) to denote the geodesic distance with respect to the canonical metric. As stated above, to deduce that  certain functions are Lipschitz with respect to $d_C$, the following bound in terms of Frobenius distance will suffice.

\begin{lemma}\label{lem:geodesic-frobenius}
For \(U_1,U_2\in \mathrm{St}(n,d)\),
\[
d_C(U_1,U_2)\ge \frac{1}{\sqrt2}\,\|U_1-U_2\|_{\mathrm F}.
\]
\end{lemma}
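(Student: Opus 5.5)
Since $d_C$ is the infimum of canonical lengths of piecewise smooth curves in $\mathrm{St}(n,d)$ joining $U_1$ and $U_2$, the plan is a pointwise comparison of the canonical and Euclidean norms on each tangent space, followed by integration along curves.

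\emph{Step 1 (pointwise comparison).} Fix $U\in\mathrm{St}(n,d)$ and $\Delta\in T_U\mathrm{St}(n,d)$. We will show
\[
\mathrm{Tr}\!\left(\Delta^*\bigl(I_n-\tfrac12UU^*\bigr)\Delta\right)\ \ge\ \tfrac12\,\|\Delta\|_{\mathrm F}^2 .
\]
Because $U^*U=I_d$, the matrix $P=UU^*$ is an orthogonal projection. Hence $I_n-\tfrac12P$ has eigenvalues $1$ and $\tfrac12$, so $I_n-\tfrac12P\succeq \tfrac12 I_n$. The inequality follows since $\mathrm{Tr}(\Delta^*M\Delta)\ge \lambda_{\min}(M)\|\Delta\|_{\mathrm F}^2$ for symmetric $M$. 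Equivalently, in the decomposition $\Delta=UA+U_\perp B$ the canonical squared norm is $\tfrac12\|A\|_{\mathrm F}^2+\|B\|_{\mathrm F}^2$, while the Euclidean one is $\|A\|_{\mathrm F}^2+\|B\|_{\mathrm F}^2$, which gives the same bound.

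\emph{Step 2 (integration along curves).} Let $\gamma:[0,1]\to\mathrm{St}(n,d)$ be any piecewise smooth curve from $U_1$ to $U_2$. By Step 1, its canonical length satisfies
\[
L_C(\gamma)=\int_0^1\|\dot\gamma(t)\|_C\,dt\ \ge\ \tfrac1{\sqrt2}\int_0^1\|\dot\gamma(t)\|_{\mathrm F}\,dt .
\]
The last integral is the Euclidean length of $\gamma$ viewed as a curve in $\R^{n\times d}$. This length is at least the straight-line distance $\|U_1-U_2\|_{\mathrm F}$, by the triangle inequality for integrals: $\|\int\dot\gamma\|\le\int\|\dot\gamma\|$.

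\emph{Step 3 (conclusion).} Taking the infimum over all such curves gives $d_C(U_1,U_2)\ge \tfrac1{\sqrt2}\|U_1-U_2\|_{\mathrm F}$. The compact manifold is connected when $n>d$, so $d_C$ is finite. If $U_1$ and $U_2$ lay in different components, $d_C$ would be $+\infty$ and the bound would hold trivially.

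The only step needing care is Step 1, specifically the eigenvalue bound for $I_n-\tfrac12UU^*$ restricted to tangent vectors. Since that bound holds on all of $\R^{n\times d}$, there is no real obstacle.
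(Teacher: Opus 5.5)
Your proof is correct, but it takes a different and more self-contained route than the paper. The paper does not argue directly. It cites a bound of Mataigne et al.,
$d_C(U_1,U_2)\ge \sqrt{\tfrac12}\cdot 2\sqrt d\,\arcsin\bigl(\|U_1-U_2\|_{\mathrm F}/(2\sqrt d)\bigr)$, and then discards the arcsine via $\arcsin x\ge x$.

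You instead derive the inequality from two elementary facts:
\begin{itemize}
\item the pointwise comparison $I_n-\tfrac12UU^*\succeq \tfrac12 I_n$, or equivalently canonical squared norm $\tfrac12\|A\|_{\mathrm F}^2+\|B\|_{\mathrm F}^2$ versus Euclidean squared norm $\|A\|_{\mathrm F}^2+\|B\|_{\mathrm F}^2$;
\item Euclidean arc length dominates chord length.
\end{itemize}

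Your argument in fact recovers the cited bound with one extra observation. Every $U\in\mathrm{St}(n,d)$ has $\|U\|_{\mathrm F}=\sqrt d$, so any curve in $\mathrm{St}(n,d)$ lies on the Frobenius sphere of radius $\sqrt d$. Its Euclidean length is therefore at least the great-circle distance $2\sqrt d\,\arcsin\bigl(\|U_1-U_2\|_{\mathrm F}/(2\sqrt d)\bigr)$. Combined with your Step 1, this gives exactly the arcsine inequality.

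The citation thus buys a sharper intermediate estimate that the paper immediately throws away. Your version needs no external reference and makes transparent that the constant $1/\sqrt2$ comes from the $\tfrac12\|A\|_{\mathrm F}^2$ term of the canonical metric. Your treatment of the disconnected case $n=d$ is also correct, though it is irrelevant to the paper's use of the lemma, where $n>d$ is assumed.
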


\begin{proof}
By~\cite{mataigne2026bounds},
\[
d_C(U_1,U_2)
\ge
\sqrt{\tfrac12}\cdot 2\sqrt d \,
\arcsin\!\left(\frac{\|U_1-U_2\|_{\mathrm F}}{2\sqrt d}\right).
\]
The result follows, since \(\arcsin(x)\ge x\) for \(x\in[0,1]\).
\end{proof}

Next, we derive a lower bound on Ricci curvature.

\begin{lemma}\label{lem:ricci-stiefel}
Let \(\xi \in T_U\mathrm{St}(n,d)\) be a unit tangent vector with respect to the canonical metric. If \(n>d+1\), then
\[
\mathrm{Ric}(\xi,\xi)\ge \frac n2-1.
\]
\end{lemma}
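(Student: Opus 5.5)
The plan is to realize $\mathrm{St}(n,d)$ with its canonical metric as a \emph{normal homogeneous space} $O(n)/O(n-d)$. Then O'Neill's formula for Riemannian submersions compares its Ricci curvature with that of $O(n)$ under a bi-invariant metric, which can be computed exactly.

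\textbf{Step 1 (the submersion).} I would equip $\mathfrak{so}(n)$ with the bi-invariant inner product $\langle X,Y\rangle=-\tfrac12\mathrm{tr}(XY)$ and consider the map $\pi:O(n)\to\mathrm{St}(n,d)$, $Q\mapsto Q\,[I_d;0]$. Its fibres are the cosets of $H=O(n-d)$, acting on the last $n-d$ coordinates. At the identity, the vertical space $\mathfrak h$ consists of skew matrices supported in the lower-right $(n-d)\times(n-d)$ block. The horizontal space $\mathfrak m$ consists of the matrices
\[
\begin{pmatrix} A & -B^* \\ B & 0\end{pmatrix},\qquad A^*=-A .
\]
Such a matrix maps to the tangent vector $UA+U_\perp B$, which has canonical length squared $\tfrac12\|A\|_F^2+\|B\|_F^2$. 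This equals $-\tfrac12\mathrm{tr}(X^2)$ for the matrix $X$ above. Hence $\pi$ is a Riemannian submersion onto the canonical metric; this is the standard description in~\cite{edelman1998geometry}, and by homogeneity it suffices to work at $U=[I_d;0]$.

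\textbf{Step 2 (curvature of the total space).} For a bi-invariant metric, $K_G(X,Y)=\tfrac14\|[X,Y]\|^2$ for orthonormal $X,Y$, and $\mathrm{Ric}_G(X,X)=-\tfrac14 B_{\mathfrak{so}(n)}(X,X)$. The Killing form of $\mathfrak{so}(n)$ is $B(X,X)=(n-2)\mathrm{tr}(X^2)$, so
\[
\mathrm{Ric}_G(X,X)=\tfrac{n-2}{2}\|X\|^2 .
\]
Thus a unit $\xi$ satisfies $\mathrm{Ric}_G(\xi,\xi)=\tfrac n2-1$.

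\textbf{Step 3 (O'Neill).} Let $\xi$ be a unit horizontal vector, let $\{e_j\}$ be an orthonormal basis of the rest of $\mathfrak m$, and let $\{f_k\}$ be an orthonormal basis of $\mathfrak h$. O'Neill's formula gives
\[
K_M(\xi,e_j)=K_G(\xi,e_j)+\tfrac34\|[\xi,e_j]^{\mathfrak h}\|^2 .
\]
Summing over $j$ and writing $\mathrm{Ric}_G(\xi,\xi)=\sum_j K_G(\xi,e_j)+\sum_k K_G(\xi,f_k)$, I obtain
\[
\mathrm{Ric}_M(\xi,\xi)=\tfrac n2-1-\tfrac14\sum_k\|[\xi,f_k]\|^2+\tfrac34\sum_j\|[\xi,e_j]^{\mathfrak h}\|^2 .
\]

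\textbf{Step 4 (the main obstacle).} The key point is controlling the negative term. Ad-invariance gives
\[
\sum_j\|[\xi,e_j]^{\mathfrak h}\|^2=\sum_{j,k}\langle[\xi,e_j],f_k\rangle^2=\sum_{j,k}\langle e_j,[\xi,f_k]\rangle^2=\sum_k\|[\xi,f_k]^{\mathfrak m}\|^2 .
\]
Since $[\mathfrak h,\mathfrak m]\subset\mathfrak m$, each $[\xi,f_k]$ already lies in $\mathfrak m$; it is also orthogonal to $\xi$, because $\langle[\xi,f_k],\xi\rangle=0$. So the last sum equals $\sum_k\|[\xi,f_k]\|^2$. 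Substituting into Step 3 yields
\[
\mathrm{Ric}_M(\xi,\xi)=\tfrac n2-1+\tfrac12\sum_k\|[\xi,f_k]\|^2\ \ge\ \tfrac n2-1 .
\]
I expect the delicate part to be checking these bracket and normalization conventions carefully, namely the factor $\tfrac12$ in the metric versus the Killing form and the sign and $\tfrac34$ in O'Neill's formula. Once they are confirmed, the bound is immediate. The hypothesis $n>d+1$ ensures that $\mathrm{St}(n,d)$ is connected and that the bound $\tfrac n2-1$ is positive; the computation itself does not otherwise use it.
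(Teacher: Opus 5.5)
Your argument is correct, and it takes a genuinely different route from the paper.

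The paper does not derive the curvature. It quotes an explicit Ricci formula for the canonical metric (from Nguyen), which for $\xi=UA+U_\perp B$ reads $\mathrm{Ric}(\xi,\xi)=\tfrac{n-2}{4}\|A\|^2+\bigl(n-\tfrac d2-\tfrac32\bigr)\|B\|^2$. It then minimizes this quadratic form on the constraint $\tfrac12\|A\|^2+\|B\|^2=1$, obtaining $\min\{\tfrac n2-1,\,n-\tfrac d2-\tfrac32\}$.

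You instead derive the curvature from the normal homogeneous structure $O(n)/O(n-d)$ and O'Neill's formula. The conventions you flagged all check out. The metric $-\tfrac12\mathrm{tr}(XY)$ restricted to $\mathfrak m$ is exactly the canonical metric. The Killing form gives $\mathrm{Ric}_G=\tfrac{n-2}{2}$ on unit vectors. The $\tfrac34$ versus $\tfrac14$ bookkeeping in Step 4 is right.

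The two computations agree exactly. Take $f_k$ to be the elementary skew matrices $C_k=E_{ab}-E_{ba}$ in the lower-right block, which are orthonormal for your metric. Then $[\xi,f_k]$ is the horizontal vector with $A$-part $0$ and $B$-part $-C_kB$, and $\sum_k\|[\xi,f_k]\|^2=(n-d-1)\|B\|_F^2$. So your identity becomes $\mathrm{Ric}(\xi,\xi)=\tfrac n2-1+\tfrac{n-d-1}{2}\|B\|^2$. This is precisely the paper's formula after eliminating $\|A\|^2$ with the unit constraint.

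The paper's route is shorter, given the citation. Yours is self-contained: it uses only bi-invariant curvature, the Killing form of $\mathfrak{so}(n)$, and O'Neill. It avoids matching conventions with an external source, and it explains the constant. The value $\tfrac n2-1$ is the Ricci curvature of $O(n)$ itself, and the quotient map, whose fibres are totally geodesic, can only increase Ricci curvature in horizontal directions.

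Either form shows the bound is attained at $B=0$. It also shows the inequality already holds for $n\ge d+1$, as does connectedness of $\mathrm{St}(n,d)$. So the hypothesis $n>d+1$ is stronger than either argument needs.
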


\begin{proof}
Write \(\xi=UA+U_\perp B\), where \(A\) is skew-symmetric and \(B\in \mathbb{R}^{(n-d)\times d}\). The explicit formula for the Ricci curvature under the canonical metric, obtained from~\cite{nguyen2022curvatures}, gives
\[
\mathrm{Ric}(\xi,\xi)
=
\left(\frac{2-d}{4}+\frac{d-n}{4}\right)\mathrm{Tr}(A^2)
+
\left(\frac{1-d}{2}+(n-2)\right)\mathrm{Tr}(B^*B).
\]
Since \(\mathrm{Tr}(A^2)=-\|A\|^2\), this becomes
\[
\mathrm{Ric}(\xi,\xi)
=
\frac{n-2}{4}\|A\|^2
+
\left(n-\frac d2-\frac32\right)\|B\|^2.
\]
Because \(\xi\) is a unit vector with respect to the canonical metric,
\[
\frac12\|A\|^2+\|B\|^2=1.
\]
Optimizing under this constraint yields
\[
\mathrm{Ric}(\xi,\xi)\ge
\min\!\left\{
\frac n2-1,\,
n-\frac d2-\frac32
\right\}.
\]
The result follows if $n > d+1$.
\end{proof}

\subsection{Analytic Ingredients}

We now introduce the function that will be used in the proof of Theorem~\ref{thm:prevalence_epsH}. We will estimate its Lipschitz constant and compute its expected value. For \(G\in \mathbb{R}^{n\times d}\), define
\[
f(G):=\max_{1\le i\le n}\|g_i\|,
\]
where \(g_i\in \mathbb{R}^d\) denotes the \(i\)-th row of \(G\). When restricted to \(\mathrm{St}(n,d)\), this function records the maximum row norm of the corresponding Parseval frame.

We first estimate \(\mathbb{E}[f(U)]\), where \(U\) is drawn from the symmetric measure on \(\mathrm{St}(n,d)\). Since the row norms of \(U\) are not independent, we compare \(U\) to a Gaussian matrix using the following result of Tropp.

\begin{theorem}[{\cite[Theorem~1]{tropp2012comparison}}]\label{sublinear}
Let \(U\) be a random element of \(\mathrm{St}(n,d)\) drawn from the symmetric measure, and let \(G\in\mathbb{R}^{n\times d}\) be a random matrix with independent \(\mathcal{N}(0,n^{-1})\) entries. For each nonnegative, sublinear, convex function \(f\) on \(\mathbb{R}^{n\times d}\) and each weakly increasing, convex function \(\Phi:\mathbb{R}\to\mathbb{R}\), we have
\[
\mathbb{E}\,\Phi\!\bigl(f(U)\bigr)
\le
\mathbb{E}\,\Phi\!\left(\left(1+\frac{d}{2n}\right)f(G)\right).
\]
\end{theorem}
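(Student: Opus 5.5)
Since this comparison principle is imported from \cite{tropp2012comparison}, we only sketch how we would prove it. The approach is a polar-decomposition plus Jensen argument. We write the Gaussian matrix as $G = Q R$ with $R := (G^*G)^{1/2}$ and $Q := G R^{-1}$; these are well defined almost surely because $G$ has full column rank. By orthogonal invariance of the Gaussian law under $G \mapsto VG$ for $V \in O(n)$, the factor $Q$ is distributed according to the symmetric measure on $\mathrm{St}(n,d)$, and $Q$ is independent of $R$. This is the standard fact that the polar factor of a Gaussian matrix is Haar-distributed and independent of its modulus. Hence we may take $U = Q$. Moreover, since $G \mapsto GW$ for $W \in O(d)$ preserves the law of $G$, the law of $R$ is invariant under conjugation $R \mapsto W^*RW$. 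Therefore $\mathbb{E}[R] = c\, I_d$ with
\[
c = \tfrac{1}{d}\,\mathbb{E}\,\mathrm{Tr}\bigl((G^*G)^{1/2}\bigr).
\]

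The main step is conditional Jensen. Condition on $Q$. The map $R \mapsto f(QR)$ is convex, and $\Phi$ is convex and weakly increasing, so $R \mapsto \Phi(f(QR))$ is convex. Jensen's inequality then gives
\[
\mathbb{E}\bigl[\Phi(f(G)) \,\big|\, Q\bigr] \ge \Phi\bigl(f(Q\,\mathbb{E}R)\bigr) = \Phi\bigl(c\, f(Q)\bigr),
\]
where the last equality uses positive homogeneity of $f$, which follows from sublinearity. Taking expectations gives $\mathbb{E}\,\Phi(c f(U)) \le \mathbb{E}\,\Phi(f(G))$. We then apply this inequality to the increasing convex function $t \mapsto \Phi(t/c)$ and to the sublinear convex function $c^{-1}f$. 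This yields
\[
\mathbb{E}\,\Phi(f(U)) \le \mathbb{E}\,\Phi\bigl(c^{-1} f(G)\bigr).
\]
Because $f \ge 0$ and $\Phi$ is increasing, it remains only to show that $c^{-1} \le 1 + \frac{d}{2n}$.

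Bounding $c$ from below is the step we expect to be the main obstacle. We would first use the scalar inequality $\sqrt{x} \ge \tfrac12(3x - x^2)$ for $x \ge 0$, which is tangent to $\sqrt{x}$ at $x = 1$. Applied to the eigenvalues of the Wishart matrix $W = G^*G$, it gives
\[
c \ge \tfrac{1}{2d}\,\bigl(3\,\mathbb{E}\,\mathrm{Tr}\,W - \mathbb{E}\,\mathrm{Tr}\,W^2\bigr).
\]
The Gaussian moment computation gives $\mathbb{E}\,\mathrm{Tr}\,W = d$ and $\mathbb{E}\,\mathrm{Tr}\,W^2 = d(n+d+1)/n$. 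Together these give $c \ge 1 - \frac{d+1}{2n}$. This has the right order but is slightly weaker than the target $c \ge (1 + \frac{d}{2n})^{-1}$.

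To close the gap, we would sharpen the estimate of $\mathbb{E}\,\mathrm{Tr}\,W^{1/2}$. One option is to use the exact distribution of the singular values, via the Bartlett decomposition into independent chi variables, to compute $\mathbb{E}$ of the sum of singular values more precisely. Another is to use a better-adapted polynomial lower bound for $\sqrt{x}$ that exploits the concentration of the spectrum of $W$ around $1$. Either route should give $c \ge 1 - \frac{d}{2n} + O(n^{-2})$, and hence $c^{-1} \le 1 + \frac{d}{2n}$.
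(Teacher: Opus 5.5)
\textbf{Verdict: genuine gap.} The reduction is sound, but the bound that fixes the constant $1+\frac{d}{2n}$ is never proved, and the proposed repair does not work.

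The paper gives no proof of this statement; it imports it from \cite{tropp2012comparison}. The parts of your argument that do hold are these: the polar factorization $G=QR$ with $R=(G^*G)^{1/2}$, $Q$ symmetric-measure distributed and independent of $R$, $\mathbb{E}R=cI_d$ by $O(d)$-invariance, conditional Jensen, and positive homogeneity. Together they give $\mathbb{E}\,\Phi(f(U))\le\mathbb{E}\,\Phi(c^{-1}f(G))$. (In the rescaling step substitute \emph{either} $t\mapsto\Phi(t/c)$ \emph{or} $c^{-1}f$, not both; doing both produces $c^{-2}$.)

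The real content of the statement is the nonasymptotic bound $c\ge(1+\frac{d}{2n})^{-1}$ for every $n\ge d$, and you do not prove it.
\begin{itemize}
\item Your moment bound $c\ge 1-\frac{d+1}{2n}$ only yields the constant $(1-\frac{d+1}{2n})^{-1}$. This is strictly larger than $1+\frac{d}{2n}$; at $d=n$ it exceeds $2$ instead of $\frac32$.
\item The step ``$c\ge 1-\frac{d}{2n}+O(n^{-2})$, hence $c^{-1}\le 1+\frac{d}{2n}$'' is a non sequitur. With $x=\frac{d}{2n}$ you need $c\ge\frac{1}{1+x}=1-x+\frac{x^2}{1+x}$. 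That is a \emph{positive} second-order term of definite size, valid uniformly up to $d=n$, not an asymptotic statement.
\item Better quadratic minorants cannot close the gap. Any quadratic minorant of $\sqrt{x}$ sees only $\mathbb{E}\operatorname{Tr}W=d$ and $\mathbb{E}\operatorname{Tr}W^2=d(1+\frac{d+1}{n})$. The best one (tangent at $x=1+\frac{d+1}{n}$, equivalently H\"older) gives $c\ge(1+\frac{d+1}{n})^{-1/2}$. This implies the target iff $d^2\ge 4n$, so it fails for instance when $d=1$.
\item The Bartlett factor $T$ in $\sqrt{n}\,G=Q'T$ has the same singular values as $\sqrt{n}\,G$. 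However, $\sum_j\sigma_j(T)$ is not an explicit function of its independent entries, so there is nothing to ``compute more precisely.''
\end{itemize}

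The missing idea is a one-sided bound on the nuclear norm: $\|T\|_{S_1}=\max_{\|V\|_{\mathrm{op}}\le 1}\operatorname{Tr}(V^*T)\ge\operatorname{Tr}T$. Since $T_{jj}\sim\chi_{n-j+1}$, this gives $c\ge\frac{1}{d\sqrt{n}}\sum_{j=1}^d\mathbb{E}\chi_{n-j+1}$. Next use $\mathbb{E}\chi_m\ge\sqrt{m-\tfrac12}$ (Gautschi--Kershaw), and by concavity $\sqrt{m-\tfrac12}\ge\int_{m-1}^{m}\sqrt{t}\,dt$. With $y=d/n\in(0,1]$ this yields
\[
c\;\ge\;\frac{1}{d\sqrt{n}}\int_{n-d}^{n}\sqrt{t}\,dt=\frac{2}{3y}\Bigl(1-(1-y)^{3/2}\Bigr)\;\ge\;\frac{2}{2+y}=\Bigl(1+\frac{d}{2n}\Bigr)^{-1}.
\]
The last inequality is $h(y)=(2+y)\bigl(1-(1-y)^{3/2}\bigr)-3y\ge 0$ on $[0,1]$. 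To see it, note $h(0)=h'(0)=h(1)=0$, and $h''$ has the sign of $\frac32-\frac{15}{4}y$. So $h$ is convex on $[0,\frac25]$ and concave on $[\frac25,1]$, hence nonnegative. The chain is an equality at $y=1$, which shows there is no slack for a cruder estimate of $c$ when $d=n$.
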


We are now ready to bound the expected value.

\begin{proposition}\label{prop:EfU}
Assume \(n>d\geq 2\). Then
\[
\mathbb{E}[f(U)]
\le
\frac{3\sqrt{17}}{4}
\sqrt{\frac{d+\ln n}{n}}.
\]
\end{proposition}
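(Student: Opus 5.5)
We apply Theorem~\ref{sublinear} with \(\Phi(t)=t\) and \(f(G)=\max_i\|g_i\|\). This \(f\) is nonnegative, positively homogeneous, subadditive (a maximum of seminorms), and hence convex, so the theorem applies and gives
\[
\mathbb{E}[f(U)]\le \Bigl(1+\frac{d}{2n}\Bigr)\mathbb{E}[f(G)]\le \frac32\,\mathbb{E}\Bigl[\max_i\|g_i\|\Bigr],
\]
using \(d<n\). Here \(G\) has i.i.d.\ \(\mathcal N(0,n^{-1})\) entries. It then suffices to show \(\mathbb{E}[\max_i\|g_i\|]\le \frac{\sqrt{17}}{2}\sqrt{(d+\ln n)/n}\).

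Write \(g_i=n^{-1/2}h_i\), where the \(h_i\) are independent standard Gaussian vectors in \(\R^d\), so \(\|h_i\|^2\sim\chi^2_d\). We bound the maximum through the moment generating function of its square. By Jensen,
\[
\mathbb{E}\max_i\|h_i\|\le\Bigl(\mathbb{E}\max_i\|h_i\|^2\Bigr)^{1/2}.
\]
For \(\lambda\in(0,1/2)\), apply Jensen to the exponential and then a union bound:
\[
\exp\bigl(\lambda\,\mathbb{E}\max_i\|h_i\|^2\bigr)\le \sum_{i=1}^n\mathbb{E}e^{\lambda\|h_i\|^2}=n(1-2\lambda)^{-d/2}.
\]
This yields
\[
\mathbb{E}\max_i\|h_i\|^2\le \frac{\ln n+\frac d2\ln\frac1{1-2\lambda}}{\lambda}.
\]

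Next we choose \(\lambda\) to get a clean constant. With \(\lambda=1/4\) the bound is \(4\ln n+2d\ln 2\le 4(d+\ln n)\). This gives \(\mathbb{E}\max\|g_i\|\le 2\sqrt{(d+\ln n)/n}\), and \(2<\sqrt{17}/2\approx 2.06\), so the claimed constant follows with room to spare. The stated constant \(\sqrt{17}/2\) suggests the authors used a slightly different choice, for instance an integrated tail bound of the form \(\mathbb{P}(\|h\|\ge\sqrt d+t)\le e^{-t^2/2}\) split at a threshold. Any of these routes suffices, since we only need the constant to be at most \(\sqrt{17}/2\). Multiplying by \(3/2\) gives \(\frac{3\sqrt{17}}{4}\sqrt{(d+\ln n)/n}\).

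The main obstacle is checking the hypotheses of Theorem~\ref{sublinear}. Sublinearity means \(f(\alpha G)=\alpha f(G)\) for \(\alpha\ge0\) and \(f(G+H)\le f(G)+f(H)\), both of which are immediate row by row. Beyond that, the work is the constant bookkeeping in the chi-square maximum. There we must be careful that the Jensen steps go in the right direction: the square root is concave and the exponential is convex. The hypothesis \(d\ge2\) is not needed for the \(\lambda=1/4\) bound; it presumably entered the authors' own constant optimization.
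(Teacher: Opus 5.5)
Your proof is correct: the Jensen directions, the chi-square moment generating function, the estimate \(2d\ln 2\le 4d\), and the comparison \(2<\sqrt{17}/2\) all check out. The first step (Tropp's comparison with \(\Phi(t)=t\) and \(1+\frac{d}{2n}\le\frac32\)) is the same as in the paper. After that you bound \(\mathbb{E}[f(G)]\) by a genuinely different route.

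The paper works with the tail of the maximum directly. It writes \(\mathbb{E}[f(G)]=\int_0^\infty\mathbb{P}(f(G)>t)\,dt\) and bounds \(\mathbb{P}(f(G)>t)\le n\,\mathbb{P}(\chi^2_d\ge nt^2)\) via \(1-x^n\le n(1-x)\). It then inserts the Laurent--Massart chi-square tail inequality, changes variables, and uses \(\int_B^\infty e^{-r^2}\,dr\le e^{-B^2}/(2B)\) with the split point \(B=\sqrt{\ln n}\). Finally it uses \(n>d\ge 2\) to absorb the leftover term \((2\ln n)^{-1/2}\) into \(\frac12\sqrt d\), and a Cauchy--Schwarz step produces \(\sqrt{17}/2\). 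So your guess about where \(d\ge 2\) enters is right.

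Your Jensen/MGF/union-bound argument needs only the chi-square moment generating function. It avoids the tail integration, the Laurent--Massart inequality, and the hypothesis \(d\ge2\), and it gives a slightly better final constant, \(3<\frac{3\sqrt{17}}{4}\).

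The paper's route does produce a sharper intermediate estimate, \(\mathbb{E}[f(G)]\le n^{-1/2}\bigl(\sqrt{d+2\ln n+2\sqrt{d\ln n}}+(2\ln n)^{-1/2}\bigr)\). Its leading behavior beats your \(\lambda=1/4\) bound \(n^{-1/2}\sqrt{4\ln n+2d\ln 2}\) when either \(d\) or \(\ln n\) dominates. However, this advantage is discarded when passing to the clean form \(\sqrt{(d+\ln n)/n}\). Your method also recovers it if you optimize over \(\lambda\): the estimate \(-\frac d2\ln(1-2\lambda)\le \lambda d+\frac{d\lambda^2}{1-2\lambda}\), together with the substitution \(\mu=\lambda/(1-2\lambda)\), gives \(\mathbb{E}\max_i\|h_i\|^2\le d+2\sqrt{d\ln n}+2\ln n\), with no extra \((2\ln n)^{-1/2}\) term.
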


\begin{proof}
Take \(\Phi\) to be the identity function in Theorem~\ref{sublinear}, which is weakly increasing and convex. The function \(f\) is nonnegative, convex, and sublinear on \(\mathbb{R}^{n\times d}\), hence
\[
\mathbb{E}[f(U)]
\le
\left(1+\frac{d}{2n}\right)\mathbb{E}[f(G)].
\]

We now compute the expected value of \(f(G)\). This follows from a straightforward calculation, since the vectors are independent and the distribution of each vector can be identified explicitly. \(\|g_i\|^2\sim \frac1n\chi_d^2\) since each component of \(g_i\) has \(\mathcal{N}(0,\frac{1}{n})\).

Since \(f(G)\ge0\), we use the standard identity
\[
\mathbb{E}[f(G)] = \int_0^\infty \mathbb{P}(f(G)>t)\,dt.
\]
Moreover, by independence,
\[
\mathbb{P}(f(G)>t)
=
1-\left(\mathbb{P}(\|g_i\|\le t)\right)^n
=
1-\left(\mathbb{P}\left(\frac 1n \chi_d^2\le t^2\right)\right)^n
=
1-\left(\mathbb{P}(\chi_d^2\le nt^2)\right)^n.
\]
Using \(1-x^n\le n(1-x)\) for \(x\in[0,1]\), we obtain
\[
\mathbb{P}(f(G)>t)\le n\,\mathbb{P}(\chi_d^2\ge nt^2).
\]
Hence, for every \(A>0\),
\begin{align*}
\mathbb{E}[f(G)] &= \int_0^A \mathbb{P}(f(G)>t)\,dt + \int_A^\infty \mathbb{P}(f(G)>t)\,dt \\ 
&\le
\int_0^A 1\,dt +
\int_A^\infty n\,\mathbb{P}(\chi_d^2\ge nt^2)\,dt\\
&=
A+n\int_A^\infty \mathbb{P}(\chi_d^2\ge nt^2)\,dt.
\end{align*}

Now apply Laurent--Massart's inequality~\cite[Lemma~1]{laurent2000adaptive}: if \(X\sim \chi_d^2\), then for all \(x\ge0\),
\[
\mathbb{P}\!\left(X-d\ge 2\sqrt{dx}+2x\right)\le e^{-x}.
\]
Solving $2\sqrt{dx}+2x = nt^2-d$ for \(x\) yields
\[
x=\frac{\left(\sqrt{2nt^2-d}-\sqrt d\right)^2}{4},
\]
and therefore
\[
\mathbb{P}(\chi_d^2\ge nt^2)
\le
\exp\!\left(
-\frac{\left(\sqrt{2nt^2-d}-\sqrt d\right)^2}{4}
\right).
\]
Substituting this into the previous inequality and performing the change of variables
\[
r=\frac{\sqrt{2nt^2-d}-\sqrt d}{2}
\]
gives
\[
\mathbb{E}[f(G)]
\le
A+\int_B^\infty
\frac{2r+\sqrt d}{\sqrt{\frac{(2r+\sqrt d)^2+d}{2n}}}
\,e^{-r^2}\,dr, \qquad \text{where} \hspace{0.2cm}
A=\sqrt{\frac{(2B+\sqrt d)^2+d}{2n}}.
\]
Bounding the prefactor by \(\sqrt{2n}\), we obtain
\[
\mathbb{E}[f(G)]
\le
A+\sqrt{2n}\int_B^\infty e^{-r^2}\,dr.
\]
Using
\[
\int_B^\infty e^{-r^2}\,dr \le \frac{e^{-B^2}}{2B},
\qquad B>0,
\]
we conclude that
\[
\mathbb{E}[f(G)]
\le
\sqrt{\frac{(2B+\sqrt d)^2+d}{2n}}
+\sqrt{2n}\,\frac{e^{-B^2}}{2B}.
\]
Choosing \(B=\sqrt{\ln n}\) yields
\[
\mathbb{E}[f(G)]
\le
\sqrt{\frac{4\ln n+4\sqrt{d\ln n}+2d}{2n}}
+\frac{1}{\sqrt{2n\ln n}}.
\]
Since \(n>d\geq 2\), we have
\begin{align*}
\mathbb{E}[f(U)]
&\leq
\left(1+\frac{d}{2n}\right)
\left(
\sqrt{\frac{4\ln n+4\sqrt{d\ln n}+2d}{2n}}
+
\frac{1}{\sqrt{2n\ln n}}
\right) \\
&=
\frac{1}{\sqrt{n}}
\left(1+\frac{d}{2n}\right)
\left(
\sqrt{d+2\ln n+2\sqrt{d\ln n}}
+
\frac{1}{\sqrt{2\ln n}}
\right) \\
&\leq
\frac{1}{\sqrt{n}}
\left(1+\frac{d}{2n}\right)
\left(
\sqrt{d}+\sqrt{2\ln n}
+
\frac{1}{\sqrt{2\ln n}}
\right) \\
&\leq
\frac{3}{2\sqrt{n}}
\left(
\frac{3}{2}\sqrt{d}+\sqrt{2\ln n}
\right) \\
&\leq
\frac{3\sqrt{17}}{4}
\sqrt{\frac{d+\ln n}{n}}.
\end{align*}
This gives the claimed result.
\end{proof}

We next compute a Lipschitz constant for \(f\) on \(\mathrm{St}(n,d)\).

\begin{proposition}\label{prop:lipschitz-f}
The restriction of \(f\) to \(\mathrm{St}(n,d)\) is \(\sqrt2\)-Lipschitz with respect to the geodesic distance \(d_C\).
\end{proposition}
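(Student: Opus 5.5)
The plan is to show first that \(f\) is \(1\)-Lipschitz on all of \(\mathbb{R}^{n\times d}\) with respect to the Frobenius norm, and then transfer this to the geodesic distance \(d_C\) using Lemma~\ref{lem:geodesic-frobenius}.

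\textbf{Step 1: Frobenius Lipschitz bound.} Fix \(G,H\in\mathbb{R}^{n\times d}\) with rows \(g_i,h_i\). For each \(i\), the reverse triangle inequality gives
\[
\|g_i\| \le \|h_i\| + \|g_i-h_i\| \le f(H) + \|g_i-h_i\|.
\]
Since \(\|g_i-h_i\|^2\le \sum_j \|g_j-h_j\|^2\), we have \(\|g_i-h_i\| \le \|G-H\|_{\mathrm F}\). Taking the maximum over \(i\) yields \(f(G)\le f(H)+\|G-H\|_{\mathrm F}\). Exchanging the roles of \(G\) and \(H\) gives
\[
|f(G)-f(H)|\le \|G-H\|_{\mathrm F}.
\]

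\textbf{Step 2: Transfer to \(d_C\).} For \(U_1,U_2\in\mathrm{St}(n,d)\), Step 1 together with Lemma~\ref{lem:geodesic-frobenius} gives
\[
|f(U_1)-f(U_2)| \le \|U_1-U_2\|_{\mathrm F}\le \sqrt2\, d_C(U_1,U_2).
\]
This is the claimed \(\sqrt2\)-Lipschitz bound.

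There is no real obstacle. All the geometry is contained in Lemma~\ref{lem:geodesic-frobenius}, which we take as given. The only point to check is that the maximum of the row-norm functions inherits the Lipschitz constant of each term, and that each row-norm function is \(1\)-Lipschitz in the Frobenius norm. Both facts are immediate from the estimates in Step 1.
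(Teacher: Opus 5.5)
Your proposal is correct and follows essentially the same route as the paper: show that $f$ is $1$-Lipschitz for the Frobenius norm via row-wise triangle inequalities and $\max_i\|g_i-h_i\|\le\|G-H\|_{\mathrm F}$, then convert to $d_C$ using Lemma~\ref{lem:geodesic-frobenius}. The paper bounds $|\max_i\|u_i\|-\max_i\|v_i\||$ by $\max_i|\|u_i\|-\|v_i\||$ directly, whereas you prove a one-sided bound and then symmetrize; this is only a cosmetic difference.
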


\begin{proof}
Let \(U,V\in \mathrm{St}(n,d)\), and let \(u_i\) and \(v_i\) denote their \(i\)-th rows. Then
\[
|f(U)-f(V)|
=
\left|\max_i\|u_i\|-\max_i\|v_i\|\right|
\le
\max_i\left|\|u_i\|-\|v_i\|\right|
\le
\max_i\|u_i-v_i\|.
\]
Therefore,
\[
|f(U)-f(V)|
\le
\bigl\|(\|u_1-v_1\|,\dots,\|u_n-v_n\|)\bigr\|_\infty
\le
\bigl\|(\|u_1-v_1\|,\dots,\|u_n-v_n\|)\bigr\|_2
=
\|U-V\|_{\mathrm F}.
\]
Thus \(f\) is \(1\)-Lipschitz with respect to the Frobenius norm. Applying Lemma~\ref{lem:geodesic-frobenius} yields the result.
\end{proof}

\subsection{Proof of Theorem~\ref{thm:prevalence_epsH}}

We are now prepared to prove the main theorem of this section.
\begin{proof}[Proof of Theorem~\ref{thm:prevalence_epsH}]
Let \(U\in \mathrm{St}(n,d)\) be the matrix associated with the random Parseval frame, and let \(u_i\) denote its rows. Since
\[
\frac{3\sqrt{17}}{4}
\sqrt{\frac{d+\ln n}{n}}
<\varepsilon,
\]
we have \(\varepsilon>\sqrt{\frac{d}{n}}\). Therefore,
\(\left|\|u_i\|-\sqrt{\frac dn}\right|\ge \varepsilon
\)
can only occur when
\(\|u_i\|-\sqrt{\frac dn}\ge \varepsilon\),
because the alternative inequality
\(\sqrt{\frac dn}-\|u_i\|\ge \varepsilon\) would force
\(\|u_i\|\le \sqrt{\frac dn}-\varepsilon<0\), which is impossible. Hence
\[
\mathbb{P}\!\left(
\exists\, i \text{ such that }
\left|\|u_i\|-\sqrt{\frac dn}\right|\ge \varepsilon
\right)
=
\mathbb{P}\!\left(
\max_{1\le i\le n}\|u_i\|-\sqrt{\frac dn}\ge \varepsilon
\right).
\]
Since \(f(U)=\max_i\|u_i\|\), this equals
\[
\mathbb{P}\!\left(
f(U)-\sqrt{\frac dn}\ge \varepsilon
\right)
=
\mathbb{P}\!\left(
f(U)-\mathbb{E}[f(U)]
\ge
\varepsilon-\bigl(\mathbb{E}[f(U)]-\sqrt{\tfrac dn}\bigr)
\right).
\]
By Proposition~\ref{prop:EfU},
\[
\mathbb{E}[f(U)]
\le
\frac{3\sqrt{17}}{4}
\sqrt{\frac{d+\ln n}{n}},
\]
and therefore
\[
\mathbb{P}\!\left(
\exists\, i \text{ such that }
\left|\|u_i\|-\sqrt{\frac dn}\right|\ge \varepsilon
\right)
\le
\mathbb{P}\!\left(
f(U)-\mathbb{E}[f(U)]
\ge
\varepsilon-
\frac{3\sqrt{17}}{4}
\sqrt{\frac{d+\ln n}{n}}
\right).
\]
By Proposition~\ref{prop:lipschitz-f}, \(f\) is \(\sqrt2\)-Lipschitz. Moreover, by Lemma~\ref{lem:ricci-stiefel}, the Ricci curvature of \(\mathrm{St}(n,d)\) under the canonical metric is bounded below by \(\frac{n}{2}-1\). Applying Theorem~\ref{ricci} with
\[
\kappa=\frac n2-1
\qquad \text{and} \qquad
\mathrm{Lip}(f)=\sqrt2
\]
gives
\[
\mathbb{P}\!\left(f(U)-\mathbb{E}[f(U)]\ge t\right)
\le
\exp\!\left(-\frac{(n-2)t^2}{8}\right),
\qquad t\ge0.
\]
Finally, substituting
\[
t=
\varepsilon-
\frac{3\sqrt{17}}{4}
\sqrt{\frac{d+\ln n}{n}},
\]
which is positive by assumption, yields the claim.
\end{proof}

In the proof above, we employed an explicit upper bound on
\(\mathbb{E}[f(G)]\). In principle, one could obtain a sharper estimate
by using the exact expression for \(\mathbb{E}[f(G)]\), which involves
the incomplete gamma function and the gamma function. Since our goal
was to state a clean explicit concentration bound and keep the resulting
inequality as simple as possible, we used an upper bound for
\(\mathbb{E}[f(G)]\). For completeness, we record below the concentration
result obtained when the exact expression is used.

\begin{proposition}\label{main}
Let $(x_i)_{i=1}^n$ be a random Parseval frame in $\R^d$, drawn from the symmetric measure, and let $\varepsilon>0$ be given. Then, for all sufficiently large $n$ satisfying $K<\varepsilon$, we have
\[
\mathbb{P}\!\left(
\max_{i \in \{1,\dots,n\}} \|x_i\| - \sqrt{\frac{d}{n}} \ge \varepsilon
\right)
\le
\exp\!\left(
-\frac{(n-2)(\varepsilon-K)^2}{8}
\right),
\]
where
\[
K
\coloneqq
\left(1+\frac{d}{2n}\right)\frac{1}{\sqrt{2n}}
\int_0^\infty
\frac{
1 -
\left(
\dfrac{\gamma\!\left(\frac{d}{2},y\right)}
{\Gamma\!\left(\frac{d}{2}\right)}
\right)^n
}{\sqrt{y}}
\,dy
-
\sqrt{\frac{d}{n}}.
\]
\end{proposition}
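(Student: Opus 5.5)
The plan is to rerun the proof of Theorem~\ref{thm:prevalence_epsH} verbatim, changing only one ingredient: the explicit estimate of Proposition~\ref{prop:EfU} is replaced by an exact formula for $\mathbb{E}[f(G)]$. The key observation is that $K$ is designed so that
\[
K = \left(1+\frac{d}{2n}\right)\mathbb{E}[f(G)] - \sqrt{\frac{d}{n}},
\]
where $G$ has i.i.d.\ $\mathcal{N}(0,n^{-1})$ entries. Then Theorem~\ref{sublinear}, with $\Phi$ the identity, gives
\[
\mathbb{E}[f(U)] - \sqrt{d/n} \le K .
\]

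\textbf{Step 1: an exact formula for $\mathbb{E}[f(G)]$.} As in the proof of Proposition~\ref{prop:EfU}, the rows are independent and $\|g_i\|^2\sim \frac1n\chi^2_d$, so
\[
\mathbb{P}(f(G)>t) = 1-\left(\mathbb{P}(\chi_d^2\le nt^2)\right)^n .
\]
The chi-square CDF is
\[
\mathbb{P}(\chi^2_d\le s)=\frac{\gamma(d/2,s/2)}{\Gamma(d/2)} .
\]
Hence
\[
\mathbb{E}[f(G)]=\int_0^\infty\left(1-\left(\frac{\gamma(d/2,nt^2/2)}{\Gamma(d/2)}\right)^n\right)dt .
\]
Substituting $y=nt^2/2$ gives $t=\sqrt{2y/n}$ and $dt = \frac{1}{\sqrt{2n}}\,y^{-1/2}\,dy$. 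This reproduces exactly the prefactor $\frac{1}{\sqrt{2n}}$ and the integrand in $K$.

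The only care needed here is checking this change of variables and the normalization of $\gamma$. It is also worth confirming that the integral converges: near $y=0$ the integrand behaves like $y^{-1/2}$, and it decays rapidly as $y\to\infty$. This bookkeeping is the main potential snag; everything else is formal.

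\textbf{Step 2: concentration.} The event in the statement is exactly $f(U)-\sqrt{d/n}\ge\varepsilon$. By Step~1 this event is contained in
\[
\{\,f(U)-\mathbb{E}[f(U)]\ge \varepsilon-K\,\}.
\]
The assumption $K<\varepsilon$ makes $t=\varepsilon-K$ positive.

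Now apply Theorem~\ref{ricci} with $\mathrm{Lip}(f)=\sqrt2$ (Proposition~\ref{prop:lipschitz-f}) and $\kappa=\frac n2-1$ (Lemma~\ref{lem:ricci-stiefel}). This yields the bound
\[
\exp\!\left(-\frac{(n-2)(\varepsilon-K)^2}{8}\right).
\]
Lemma~\ref{lem:ricci-stiefel} requires $n>d+1$, which holds for $n$ sufficiently large as in the hypothesis.

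Note that, unlike the theorem, no two-sided reduction is needed, since the statement only concerns the upper deviation of $\max_i\|x_i\|$.
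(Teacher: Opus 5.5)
Your proposal is correct and follows the same route as the paper. The paper does not write out a separate proof; it says the result comes from rerunning the proof of Theorem~\ref{thm:prevalence_epsH} with the bound on \(\mathbb{E}[f(G)]\) replaced by its exact value \(\frac{1}{\sqrt{2n}}\int_0^\infty y^{-1/2}\bigl(1-(\gamma(\tfrac d2,y)/\Gamma(\tfrac d2))^n\bigr)\,dy\), so that \(K=(1+\frac{d}{2n})\mathbb{E}[f(G)]-\sqrt{d/n}\), and then applying Theorem~\ref{ricci} with \(\kappa=\frac n2-1\) and \(\mathrm{Lip}(f)=\sqrt2\), exactly as you do. Your change of variables \(y=nt^2/2\) and chi-square CDF normalization are correct, and you are right that no two-sided reduction is needed for this one-sided statement.
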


Although this formula provides a sharper result, we chose to state the previous version instead. The reason is that the constant \(K\) appearing here is rather complicated, whereas in the previous formulation we replace it by a simpler upper bound that can be stated more transparently. If \(n \ge d/\varepsilon^2\), the one-sided condition appearing in the probability estimate in Proposition~\ref{main} is equivalent to the corresponding two-sided condition.
In Section~\ref{numerical}, we compare the theoretical predictions
of Proposition~\ref{main} with empirical results obtained
from numerical experiments.

\begin{remark}\label{remark:alternative_concentration_proof}
An alternative concentration estimate can be obtained by applying Theorem~\ref{ricci} to each coordinate separately and then using a union bound. Define
\[
g_i:\mathrm{St}(n,d)\to \mathbb{R},
\qquad
g_i(U)=e_i^*\operatorname{diag}(UU^*),
\]
so that \(g_i(U)=\|u_i\|^2\). Then \(\mathbb{E}[g_i(U)]=d/n\), and \(g_i\) is \(2\sqrt2\)-Lipschitz with respect to the geodesic distance. Applying Theorem~\ref{ricci} and summing over \(i\) yields
\[
\mathbb{P}\!\left(
\exists\, i,\;
\left|\|u_i\|^2-\frac dn\right|\ge \varepsilon
\right)
\le
2n\exp\!\left(-\frac{(n-2)\varepsilon^2}{32}\right).
\]
This bound is valid, but weaker than Theorem~\ref{thm:prevalence_epsH} because of the extra factor \(n\) produced by the union bound.
\end{remark}

\section{Application to the Paulsen Problem}\label{sec:paulsen_problem}
In this section, we present an application of  Theorem~\ref{concent_sphere} to the \emph{Paulsen problem}, a major open problem in finite-dimensional frame theory~\cite{bodmann2010road,casazza2013kadison,cahill2013paulsen,kwok2018paulsen,hamilton2021paulsen}, which we now recall. We call \((x_i)_{i=1}^n\) an \textit{\(\varepsilon\)-nearly Parseval frame} if \(\|S-I_d\|_{\mathrm{op}} \leq \varepsilon\), and an 
\textit{\(\varepsilon\)-nearly equal-norm frame} if the squared norms of its frame vectors satisfy
\((1-\varepsilon)\frac{d}{n}
\leq
\|x_i\|^2
\leq
(1+\varepsilon)\frac{d}{n}\) for all \(i \in \{1,\ldots,n\}\). We write \(\epsF\) for the class of frames that are both \(\varepsilon\)-nearly equal-norm and \(\varepsilon\)-nearly Parseval, and \(\F\) for the class of (exactly) equal-norm Parseval frames in \(\mathbb{R}^d\).

\begin{problem}[Paulsen Problem~\cite{bodmann2010road}]\label{prob:Paulsen}
    Find the simplest function $f(d,n,\varepsilon)$ satisfying, for all $(x_i)_{i=1}^n \in \epsF$, the bound 
    \[
    \inf_{(y_i)_{i=1}^n \in \F} d\big((x_i)_{i=1}^n,(y_i)_{i=1}^n \big)^2 \leq f(d,n,\varepsilon),
    \]
    where the distance is 
    \[
    d\big((x_i)_{i=1}^n,(y_i)_{i=1}^n \big) = \left(\sum_{i=1}^n \|x_i - y_i\|^2 \right)^{1/2}.
    \]
\end{problem}

The existence of such a function $f(d,n,\varepsilon)$ is attributed to Hadwin
by Bodmann and Casazza~\cite{bodmann2010road}. 
Subsequent work focused on expressing a bounding function in the cleanest possible form and, in particular, without explicit dependence on $n$. 
In \cite{kwok2018paulsen}, Kwok et al.\ were the first to obtain a bound depending only on $d$ and $\varepsilon$; more precisely, they proved a bound of order $O(\varepsilon d^{13/2})$. 
Subsequently, Hamilton and Moitra obtained the following result, which will be useful for our purposes.

\begin{theorem}[{\cite{hamilton2021paulsen}}]\label{thm:hamilton_moitra}
    For $(x_i)_{i=1}^n \in \epsF$, 
    \[
    \inf_{(y_i)_{i=1}^n \in \F} d\big((x_i)_{i=1}^n,(y_i)_{i=1}^n \big)^2 \leq 20\varepsilon d^2.
    \]
\end{theorem}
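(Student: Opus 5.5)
Since Theorem~\ref{thm:hamilton_moitra} is quoted from~\cite{hamilton2021paulsen}, I would not prove it from scratch inside this paper. If I had to, I would follow the scaling strategy. The idea is to move a frame $(x_i)_{i=1}^n\in\epsF$ to an exactly equal-norm Parseval frame only through \emph{scalings}: a linear change $x_i\mapsto Lx_i$ together with rescalings $x_i\mapsto c_i x_i$. Hamilton and Moitra's actual argument may differ in details; what follows is the route I would take.

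First I would run an alternating (Sinkhorn-type) procedure:
\[
x_i\mapsto S^{-1/2}x_i
\]
to restore the Parseval condition, alternating with
\[
x_i\mapsto \sqrt{d/n}\,x_i/\|x_i\|
\]
to restore the equal-norm condition. Equivalently, I would follow the continuous flow that interpolates these steps. Because the input is $\varepsilon$-nearly equal-norm and $\varepsilon$-nearly Parseval, the process starts close to the solution set. The natural potential to track is the capacity / log-determinant
\[
\Phi=\log\det\Bigl(\sum_i c_i x_ix_i^*\Bigr)-\frac dn\sum_i\log c_i .
\]
Near-equal-normness and near-Parsevality should give an initial suboptimality gap of order $O(\varepsilon d)$.

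The core estimate compares two quantities along the flow. The first is the squared speed $\sum_i\|\dot x_i\|^2$. The second is the rate of decrease of $\Phi$, which is a quadratic form in the errors $S-I_d$ and $\|x_i\|^2-d/n$. I would aim to show that the instantaneous movement is at most $O(d)$ times the decrease of the potential, that is,
\[
\sum_i\|\dot x_i\|^2\le C d\cdot\bigl(-\dot\Phi\bigr).
\]
I would also show that the errors stay of order $\varepsilon$ throughout, which requires the potential to decrease at a geometric rate. Integrating then gives a total squared displacement of order $d\cdot \varepsilon d=O(\varepsilon d^2)$. Keeping track of constants should produce the factor $20$.

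The main obstacle is the geometric-rate (strong-convexity) step. The potential can be flat in directions corresponding to nearly decomposable frames, where the frame almost splits into orthogonal pieces. In those directions the decrease of $\Phi$ does not control the remaining error. I would first try to show that $\varepsilon$-nearly equal-norm Parseval frames are quantitatively far from such degenerate configurations, giving a spectral-gap bound of order $1/d$ for the Hessian of $\Phi$. If that fails, I would fall back on a smoothed-analysis perturbation: make a tiny random rotation of the input, which costs only negligible distance and guarantees the gap.
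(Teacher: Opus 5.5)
Citing \cite{hamilton2021paulsen} is exactly what the paper does; it gives no proof of its own. The fallback sketch, however, would not prove the stated bound, and it breaks at the step you flag. Your first option, that frames in $\epsF$ are quantitatively far from decomposable, is false. Exact equal-norm Parseval frames can be orthogonal direct sums, for instance $\tfrac{1}{\sqrt2}(e_1,e_1,e_2,e_2)$ in $\R^2$, or the block-diagonal $V_k$ of Example~\ref{ex2}. For these, $\Phi$ has an extra flat direction (rescaling one block against the other), so the gap is $0$, and by continuity it is arbitrarily small on nearby frames in $\epsF$. Your fallback does not repair this. A rotation of $\R^d$ applied to all $x_i$ changes nothing intrinsic. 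Any perturbation that does change the configuration moves the gap only continuously, so a perturbation of negligible cost buys a negligible gap, not one of order $1/d$. Making the smoothing strong enough to matter is the continuous-operator-scaling route of \cite{kwok2018paulsen}, and it yields only $O(\varepsilon d^{13/2})$, not $20\varepsilon d^2$. Separately, integrating $\sum_i\|\dot x_i\|^2\le Cd(-\dot\Phi)$ bounds the energy $\int\sum_i\|\dot x_i\|^2\,dt$, not the squared displacement. Passing to path length costs a factor of the inverse convergence rate, and the constant $20$ is asserted rather than derived.

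The missing idea is a movement bound that does not depend on conditioning, which radial isotropic position provides in \cite{hamilton2021paulsen}. Replace $x_i$ by $u_i=S^{-1/2}x_i$. This is cheap, and $\|u_i\|^2\in[(1-\delta)\frac dn,(1+\delta)\frac dn]$ with $\delta=O(\varepsilon)$. Perturb to general position and use Barthe's theorem to get $A\succ0$ with $\frac dn\sum_i\widehat{Au_i}\,\widehat{Au_i}^{*}=I_d$, where $\hat v=v/\|v\|$. Then $w_i=\sqrt{d/n}\,\widehat{Au_i}$ is an exact equal-norm Parseval frame. Let $P_k$ be the projection onto the top $k$ eigenvectors of $A$ (eigenvalues $\lambda_1\ge\cdots\ge\lambda_d$). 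Then $\|P_k\widehat{Av}\|^2\ge\|P_k\hat v\|^2$ for every $v$. Radial isotropy gives $\frac dn\sum_i\|P_k\widehat{Au_i}\|^2=k$, and Parsevality gives $\frac dn\sum_i\|P_k\hat u_i\|^2\ge k/(1+\delta)$. In the eigenbasis, the $j$-th coordinate of $\widehat{Au_i}$ is $\lambda_j/\|Au_i\|$ times that of $\hat u_i$. So the coordinates agree in sign, and the indices where the squared coordinate grows form an initial segment. Hence $\|\hat u_i-\widehat{Au_i}\|^2\le 2\sum_{k=1}^{d-1}\bigl(\|P_k\widehat{Au_i}\|^2-\|P_k\hat u_i\|^2\bigr)$, and summing over $i$ gives
\[
\sum_{i=1}^n\Bigl\|\sqrt{d/n}\,\hat u_i-w_i\Bigr\|^2\le 2\delta\sum_{k=1}^{d-1}k=\delta d(d-1).
\]
This is where the $d^2$ comes from, and it holds however ill-conditioned $A$ is. Nearly decomposable frames therefore cause no difficulty, and no flow, potential, or smoothing is needed. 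The two normalizations, $x_i\mapsto u_i$ and $u_i\mapsto\sqrt{d/n}\,\hat u_i$, cost only $O(\varepsilon^2 d)$. The triangle inequality and a limiting argument over the general-position perturbation then give a bound of the form $C\varepsilon d^2$.
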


At present, this is the best general bound for the Paulsen problem in the literature. On the other hand, it was shown in \cite{casazza2013kadison} that any such bound must necessarily be at least of order $\varepsilon^2 d$. Our main result is the following. In the statement, we refer to a \emph{uniform measure} on $\epsF$, which is defined precisely in the following subsection---see \eqref{eqn:measure_on_F}.

\begin{theorem}[Concentration for the Paulsen Problem] \label{newpaulsen}
    Let $\varepsilon \in (0,1)$ and let $(x_i)_{i=1}^n$ be chosen uniformly at random from $\epsF$. Then the bound
    \begin{equation}\label{eqn:newPaulsenBound}
    \inf_{(y_i)_{i=1}^n \in \F} d\big((x_i)_{i=1}^n,(y_i)_{i=1}^n \big)^2 \leq (\sqrt{20}+\sqrt{2})^2\varepsilon^2 d
    \end{equation}
    holds with  probability at least $$1 - \frac{2d \exp\!\left(
-\,
\frac{n \varepsilon^4}{
2d^3
}
\right)}{1
-
2d \exp\!\left(
-\frac{6 {\varepsilon}^2 n}{5(d+2)^2}
\right)}.$$
\end{theorem}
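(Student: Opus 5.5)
The plan is to read ``uniform on $\epsF$'' as in \eqref{eqn:measure_on_F}: the uniform product measure on the set $\mathcal{A}^n$ of $\varepsilon$-nearly equal-norm frames, where $\mathcal{A}=\{x:(1-\varepsilon)\tfrac dn\le\|x\|^2\le(1+\varepsilon)\tfrac dn\}$ is a spherical shell, conditioned on the event $P=\{\|S-I_d\|_{\mathrm{op}}\le\varepsilon\}$. If $E$ denotes the event that the bound \eqref{eqn:newPaulsenBound} fails, then
\[
\mathbb{P}(E\mid P)\le \frac{\mathbb{P}(E)}{\mathbb{P}(P)}=\frac{\mathbb{P}(E)}{1-\mathbb{P}(P^c)},
\]
with both probabilities taken under the unconditioned product shell measure. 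The numerator of the stated bound should come from $\mathbb{P}(E)$ and the denominator from $\mathbb{P}(P^c)$. So the proof splits into two independent estimates.

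\textbf{Numerator (deterministic reduction plus sphere concentration).} Given $(x_i)\in\mathcal{A}^n$, radially project to $z_i=\sqrt{d/n}\,x_i/\|x_i\|$. Since the shell measure is rotation invariant, the $z_i$ are i.i.d.\ uniform on $\mathbb{S}^{d-1}_{\sqrt{d/n}}$, so Theorem~\ref{concent_sphere} applies to them.
\begin{itemize}
\item Apply Theorem~\ref{concent_sphere} with tolerance $\delta=\varepsilon^2/d$. This gives $\|S_z-I_d\|_{\mathrm{op}}\le\delta$ except with probability at most $2d\exp(-n\delta^2/(2d-2+2\delta(d-1)/3))$. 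Since $\delta\le 1$, the denominator in the exponent is at most $2d$, so the failure probability is at most $2d\exp(-n\varepsilon^4/(2d^3))$.
\item On the good event, $(z_i)$ is exactly equal-norm and $\delta$-nearly Parseval, so $(z_i)\in\mathcal{F}^{\delta}_{d,n}$. Theorem~\ref{thm:hamilton_moitra} then yields some $(y_i)\in\F$ with $d(z,y)^2\le 20\delta d^2=20\varepsilon^2 d$.
\item The projection moves each vector by $|\|x_i\|-\sqrt{d/n}|\le\sqrt{d/n}\,(1-\sqrt{1-\varepsilon})\le\varepsilon\sqrt{d/n}$. Hence $d(x,z)^2\le\varepsilon^2 d\le 2\varepsilon^2 d$.
\item The triangle inequality gives $d(x,y)\le(\sqrt{20}+\sqrt2)\,\varepsilon\sqrt d$, which is \eqref{eqn:newPaulsenBound}. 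Therefore $\mathbb{P}(E)\le 2d\exp(-n\varepsilon^4/(2d^3))$.
\end{itemize}

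\textbf{Denominator (shell concentration).} It remains to show that $\mathbb{P}(P^c)\le 2d\exp(-6\varepsilon^2 n/(5(d+2)^2))$. I would follow the proof of Theorem~\ref{concent_ball} via the remark after Theorem~\ref{concent_sphere}.
\begin{itemize}
\item Rotation invariance gives $\mathbb{E}[x x^*]=\alpha I_d$ with $\alpha=\mathbb{E}\|x\|^2/d$, computed in polar coordinates exactly as in Lemma~\ref{ball_radius}. The shell radii lie in $[\sqrt{(1-\varepsilon)d/n},\sqrt{(1+\varepsilon)d/n}]$, so $|n\alpha-1|$ is a small explicit multiple of $\varepsilon$ (the $\rho^{d+1}$ weighting creates a bias of order $\varepsilon$).
\item Write $S-I_d=\sum_i(x_ix_i^*-\alpha I_d)+(n\alpha-1)I_d$. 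Set $Y_i=x_ix_i^*-\alpha I_d$, with $\|Y_i\|_{\mathrm{op}}\le(1+\varepsilon)d/n$ and variance proxy $\sigma^2\lesssim(1+\varepsilon)d/n$.
\item Apply Theorem~\ref{bernstein} at level $t=\varepsilon-|n\alpha-1|$. The resulting exponent should simplify to $6\varepsilon^2n/(5(d+2)^2)$; the factor $(d+2)$ presumably arises through the polar-coordinate moments.
\end{itemize}

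The main obstacle is this bias term. I must show that $|n\alpha-1|$ is at most a fixed fraction of $\varepsilon$, with the fraction controlled uniformly in $d$ via the $\rho^{d+1}$ moment. I must also carry out the algebra that turns Bernstein's exponent into the clean constant $6/(5(d+2)^2)$. If the bias cannot be made a fixed fraction of $\varepsilon$, I would first try a crude bound $t\ge c\,\varepsilon/(d+2)$ and absorb everything into the stated, deliberately lossy, exponent. Combining the two estimates in the conditional-probability inequality gives the claimed probability.
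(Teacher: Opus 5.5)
Your outline matches the paper's proof: the conditioning inequality under $\mu^{\otimes n}$, radial projection with uniform pushforward, Theorem~\ref{concent_sphere} at tolerance $\delta=\varepsilon^2/d$, Theorem~\ref{thm:hamilton_moitra}, and the triangle inequality. That half is correct, and your estimate $d(x,z)^2\le\varepsilon^2 d$ is slightly sharper than the paper's $2\varepsilon^2 d$. One small fix is needed there. The inequality $2d-2+\tfrac{2\delta(d-1)}{3}\le 2d$ requires $\delta(d-1)\le 3$. This holds because $\delta=\varepsilon^2/d<1/d$, not merely because $\delta\le 1$; at $\delta=1$ it fails for $d\ge 5$.

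The genuine gap is the denominator bound $\mathbb{P}(P^c)\le 2d\exp\bigl(-6\varepsilon^2 n/(5(d+2)^2)\bigr)$, which you leave open, and neither of your plans delivers it.

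Your primary hope is that $M-1$ (with $M=n\alpha\ge 1$) is a fixed fraction of $\varepsilon$ uniformly in $d$. This is false. With $k=d/2$, the computation in Lemma~\ref{expectedvalueshell} gives
\[
1+\varepsilon-M=\frac{1+\varepsilon}{k+1}-\frac{k}{k+1}\cdot\frac{2\varepsilon}{\bigl(\frac{1+\varepsilon}{1-\varepsilon}\bigr)^{k}-1},
\]
which is about $2(1+\varepsilon)/(d+2)$ once $\varepsilon d$ is large. The shell measure concentrates near the outer radius, so $M\to 1+\varepsilon$. Hence the Bernstein level $t=\varepsilon-(M-1)$ is only of order $1/d$ there. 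The bound $t\ge 4\varepsilon/(d+2)$ used in Proposition~\ref{propshell} is sharp up to a constant when $\varepsilon$ is of constant size.

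Your fallback is exactly the paper's route, but the loss cannot be absorbed, because the variance proxy is genuinely large: $\sigma^2=n\bigl(\mathbb{E}\|x\|^4/d-(\mathbb{E}\|x\|^2)^2/d^2\bigr)\ge (d-1)/n$. With $A=(1+\varepsilon)d/n$, $\sigma^2\le(1+\varepsilon)^2 d/n$ and $t=4\varepsilon/(d+2)$, Theorem~\ref{bernstein} gives the exponent
\[
\frac{3t^2 n}{6d(1+\varepsilon)^2+2d(1+\varepsilon)t}
=\frac{24\varepsilon^2 n}{(d+2)(1+\varepsilon)\bigl(3d(d+2)(1+\varepsilon)+4d\varepsilon\bigr)}
\ \ge\ \frac{3\varepsilon^2 n}{2d(d+2)^2}.
\]
This is of order $\varepsilon^2 n/d^3$, a factor $\asymp d$ weaker than $6\varepsilon^2 n/(5(d+2)^2)$. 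Using the exact $t$ instead still only gives about $2n/(d(d+2)^2)$ when $\varepsilon d\gg 1$.

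The exponent displayed in Proposition~\ref{propshell} is the middle expression above with the leading $d$ in $3d(d+2)(1+\varepsilon)$ missing. As far as I can check, the paper's own argument therefore supports only the denominator $1-2d\exp\bigl(-\tfrac{3\varepsilon^2 n}{2d(d+2)^2}\bigr)$. Matrix Bernstein on the shell cannot produce $6/(5(d+2)^2)$ for large $d$. Reaching order $\varepsilon^2 n/d^2$ needs a tool whose variance proxy is $O(1/n)$, for example scalar Bernstein for $u^*(S-MI_d)u$ over a net of unit vectors. That costs an $e^{O(d)}$ prefactor in place of $2d$.
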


The next (immediate) corollary quantifies the success probability in terms of $n$.

\begin{corollary}
A random frame in $\epsF$ satisfies the Paulsen bound \eqref{eqn:newPaulsenBound} with probability $95\%$ if
\[
n \;\ge\; \frac{2d^3 \ln(80d)}{ \varepsilon^4},
\]
and with probability $99\%$ if
\[
n \;\ge\; \frac{2d^3 \ln(400d)}{ \varepsilon^4}.
\]
\end{corollary}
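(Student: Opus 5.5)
The plan is to plug the two sample-size thresholds directly into the failure probability of Theorem~\ref{newpaulsen} and bound its numerator and denominator separately. Write the failure probability as $a/(1-b)$, where
\[
a = 2d\exp\!\left(-\frac{n\varepsilon^4}{2d^3}\right), \qquad b = 2d\exp\!\left(-\frac{6\varepsilon^2 n}{5(d+2)^2}\right).
\]
It suffices to show $a \le 1/40$ and $b \le 1/2$ for the $95\%$ claim, since then $a/(1-b) \le 2a \le 1/20$. Likewise, $a \le 1/200$ and $b \le 1/2$ give the $99\%$ claim.

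\textbf{The numerator.} This is immediate. If $n \ge 2d^3\ln(80d)/\varepsilon^4$, then $n\varepsilon^4/(2d^3) \ge \ln(80d)$, so $a \le 2d/(80d) = 1/40$. Replacing $80d$ by $400d$ gives $a \le 1/200$.

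\textbf{The denominator.} Substituting the same lower bound on $n$ into the exponent of $b$ gives
\[
\frac{6\varepsilon^2 n}{5(d+2)^2} \;\ge\; \frac{12}{5}\cdot\frac{d^3}{(d+2)^2}\cdot\frac{\ln(80d)}{\varepsilon^2} \;\ge\; \frac{12}{5}\cdot\frac{d^3}{(d+2)^2}\,\ln(80d),
\]
where the last step uses $\varepsilon<1$. For $d\ge 2$ we have $4d^2 \ge (d+2)^2$, hence $d^3/(d+2)^2 \ge d/4$. The exponent is therefore at least $\tfrac{3d}{5}\ln(80d)$, and so
\[
b \;\le\; 2d\,(80d)^{-3d/5}.
\]
This quantity is decreasing in $d$; at $d=2$ it equals $4\cdot 160^{-6/5} < 0.01$. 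The $400d$ threshold only makes $b$ smaller.

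\textbf{The main obstacle} is the degenerate case $d=1$, where the crude estimate above only gives $b \le 2\cdot 80^{-4/15}\approx 0.62$. That is not enough for the $95\%$ claim when $\varepsilon$ is close to $1$. I would first try to handle $d=1$ directly by keeping the factor $1/\varepsilon^2$, or by noting that $\mathbb{R}^1$ is implicitly excluded, as in the standing assumption $d\ge 2$ of Section~\ref{sec:stiefel_manifold}. Otherwise I would state the corollary for $d\ge2$.
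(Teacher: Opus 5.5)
Your argument is correct and is exactly what the paper's ``immediate'' corollary encodes: the constants $80d$ and $400d$ are chosen so that the numerator is at most $1/40$ (resp.\ $1/200$) while the denominator stays at least $1/2$, and your check that $b<0.01$ for $d\ge 2$ supplies the step the paper leaves implicit. The $d=1$ case is harmless: the paper tacitly assumes $d\ge 2$ (e.g.\ in Proposition~\ref{propshell}), and for $d=1$ the projections $z_i=\pm 1/\sqrt{n}$ already form an equal-norm Parseval frame, so the bound holds with probability one.
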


The theorem is proven in the next subsection, after setting up necessary preliminary results. In Section~\ref{sec:optimal_Paulsen_Bound}, we clarify the distinction between the best possible deterministic scaling and the high-probability scaling obtained here.

\subsection{Proof of  Theorem \ref{newpaulsen}}

Let us begin by setting up some relevant notation. Consider the subset of $\mathbb{R}^d$ consisting of all vectors whose squared norms lie between $(1-\varepsilon)\frac{d}{n}$ and $(1+\varepsilon)\frac{d}{n}$.
This set can be viewed as a thin spherical shell in $\mathbb{R}^d$.  
We denote it by
\[
\mathcal{S}_{\varepsilon}^d
\coloneqq
\left\{
v \in \mathbb{R}^d
:\;
(1-\varepsilon)\frac{d}{n}
\leq
\|v\|^2
\leq
(1+\varepsilon)\frac{d}{n}
\right\}.
\]
Let $\mu$ denote the uniform probability measure on
$\mathcal{S}^d_{\varepsilon}$, and consider the product measure
$\mu^{\otimes n}$ on $(\mathcal{S}^d_{\varepsilon})^n$.
Our primary set of interest is $\epsF \subset (\mathcal{S}^d_{\varepsilon})^n$. Since $\mathbb{P}_{\mu^{\otimes n}}\big( (x_1,\dots,x_n) \in \epsF \big) > 0$, the corresponding conditional probabilities are well-defined. Let $\nu$ denote the conditional probability measure, i.e.,
\begin{equation}\label{eqn:measure_on_F}
\nu(\,\cdot \,) \coloneqq \mu^{\otimes n}(\,\cdot \mid \epsF).
\end{equation}

Moreover, $\nu$ is precisely the normalized restriction of the uniform product measure
to $\epsF$; in particular, it is again uniform on $\epsF$. This is the measure on $\epsF$ that is used in Theorem \ref{newpaulsen}.

The proof of Theorem~\ref{newpaulsen} consists of two separate parts. In the first part, we compute the relevant probability, and in the second part, we derive the upper bound. Since the proof is rather lengthy, we first establish some auxiliary results that will be needed in the argument.

\begin{lemma}{\label{expectedvalueshell}}
Let $v$ be a random vector uniformly distributed on $\mathcal{S}^d_{\varepsilon}$. Then, 
    \[
\mathbb{E}\|v\|^2
=
\frac dn
\cdot
\frac{d}{d+2}
\cdot
\frac{
(1+\varepsilon)^{\frac{d+2}{2}}
-
(1-\varepsilon)^{\frac{d+2}{2}}
}{
(1+\varepsilon)^{\frac d2}
-
(1-\varepsilon)^{\frac d2}
} \leq \frac dn
\cdot
\frac{d}{d+2}
\cdot \left( 1+\varepsilon +\frac{2(1-\varepsilon)}{d}\right).
\]
\end{lemma}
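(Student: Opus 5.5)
The plan is to compute the second moment with polar coordinates, exactly as in the proof of Lemma~\ref{ball_radius}, and then to get the inequality from a mean value estimate on the function $t\mapsto t^{d/2}$.

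\textbf{The identity.} Write $r_\pm^2 = (1\pm\varepsilon)\frac{d}{n}$ for the outer and inner radii of the shell $\mathcal{S}^d_\varepsilon$. In polar coordinates $dV=\rho^{d-1}\,d\rho\,d\theta$, and the factor $\operatorname{Vol}(S^{d-1})$ cancels, so
\[
\mathbb{E}\|v\|^2=\frac{\int_{r_-}^{r_+}\rho^{d+1}\,d\rho}{\int_{r_-}^{r_+}\rho^{d-1}\,d\rho}
=\frac{d}{d+2}\cdot\frac{r_+^{d+2}-r_-^{d+2}}{r_+^{d}-r_-^{d}}.
\]
Substituting $r_\pm^{k}=(d/n)^{k/2}(1\pm\varepsilon)^{k/2}$ pulls out one factor of $d/n$ from the quotient. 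This gives the stated closed form.

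\textbf{The inequality.} Set $p=(1+\varepsilon)^{d/2}$ and $q=(1-\varepsilon)^{d/2}$. The numerator of the fraction splits as
\[
(1+\varepsilon)p-(1-\varepsilon)q=(1+\varepsilon)(p-q)+2\varepsilon q .
\]
Hence the fraction equals
\[
1+\varepsilon+\frac{2\varepsilon q}{p-q}.
\]
It therefore suffices to show $\frac{2\varepsilon q}{p-q}\le \frac{2(1-\varepsilon)}{d}$, which is equivalent to
\[
p-q\ \ge\ \frac{d\,\varepsilon\, q}{1-\varepsilon}.
\]

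\textbf{Main step.} We write
\[
p-q=\frac d2\int_{1-\varepsilon}^{1+\varepsilon}t^{d/2-1}\,dt .
\]
For $d\ge 2$ the integrand is nondecreasing in $t$, so it is bounded below by its value at the left endpoint, $(1-\varepsilon)^{d/2-1}$. This yields
\[
p-q\ \ge\ \frac d2\cdot 2\varepsilon\cdot(1-\varepsilon)^{d/2-1}=\frac{d\,\varepsilon\,q}{1-\varepsilon},
\]
which is exactly the required inequality. Here $\varepsilon\in(0,1)$, so $1-\varepsilon>0$ and the division is legitimate.

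The only delicate point is this monotonicity, which requires $d\ge 2$; this is the standing setting of the paper. For $d=1$ the exponent $d/2-1$ is negative, and I would handle that case by a separate direct check if it is needed.
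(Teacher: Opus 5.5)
Your proof is correct and is essentially the paper's argument. The paper makes the same split of the fraction into $(1+\varepsilon)+\frac{2\varepsilon}{(\frac{1+\varepsilon}{1-\varepsilon})^{d/2}-1}$ and then applies Bernoulli's inequality $(1+u)^{d/2}\ge 1+\frac d2 u$ with $u=\frac{2\varepsilon}{1-\varepsilon}$; multiplied by $q=(1-\varepsilon)^{d/2}$, this is exactly your estimate $p-q\ge d\varepsilon(1-\varepsilon)^{d/2-1}$, which you instead prove by integrating the nondecreasing function $t^{d/2-1}$.

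Both arguments need $d\ge 2$, and you should drop the $d=1$ aside rather than defer it to a direct check. For $d=1$ the fraction equals $2+\sqrt{1-\varepsilon^2}$, which exceeds $3-\varepsilon$ for every $\varepsilon\in(0,1)$, so the stated inequality is false in that case.
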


\begin{proof} By the definition of the expected value, we have
    \[
\mathbb{E}\|v\|^2
=
\frac{\int_{\mathcal{S}^d_{\varepsilon}} \|v\|^2\, dv}{\int_{\mathcal{S}^d_{\varepsilon}} dv}.
\]
Using polar coordinates $v=r\omega$ with $\omega\in S^{d-1}$ and $dv = r^{d-1}\,dr\,d\omega$, and writing
\[
a=\sqrt{\frac dn(1-\varepsilon)},
\qquad
b=\sqrt{\frac dn(1+\varepsilon)},
\]
we obtain
\[
\mathbb{E}\|v\|^2
=
\frac{\int_a^b r^2\, r^{d-1}\,dr}{\int_a^b r^{d-1}\,dr}
=
\frac{d}{d+2}\cdot
\frac{b^{d+2}-a^{d+2}}{b^{d}-a^{d}}.
\]
Substituting $a,b$ yields 
\begin{equation}\label{eqn:expectation}
\mathbb{E}\|v\|^2
=
\frac dn
\cdot
\frac{d}{d+2}
\cdot
\frac{
(1+\varepsilon)^{\frac{d+2}{2}}
-
(1-\varepsilon)^{\frac{d+2}{2}}
}{
(1+\varepsilon)^{\frac d2}
-
(1-\varepsilon)^{\frac d2}
},
\end{equation}
and this proves the equality part of the lemma. 

Since the expression \eqref{eqn:expectation} is  complicated, we now derive a simpler upper bound. Let $k=d/2$, $\alpha=1+\varepsilon$ and $\beta=1-\varepsilon$ where $\varepsilon \in (0,1)$. Define
\[
L
\coloneqq
\frac{\alpha^{k+1}-\beta^{k+1}}{\alpha^{k}-\beta^{k}}.
\]

Observe that $\alpha^{k+1}-\beta^{k+1}
=
\alpha(\alpha^k-\beta^k)
+
\beta^k(\alpha-\beta)$, hence
\[
L
=
\alpha+\frac{\beta^k(\alpha-\beta)}{\alpha^k-\beta^k}
=
\alpha+\frac{\alpha-\beta}{\left(\frac{\alpha}{\beta}\right)^k-1}.
\]

Now
\[
\alpha-\beta=2\varepsilon,
\qquad
\frac{\alpha}{\beta}
=
\frac{1+\varepsilon}{1-\varepsilon}
=
1+u, \quad \text{where} \quad u=\frac{2\varepsilon}{1-\varepsilon}>0.
\]

Then
\[
\left(\frac{\alpha}{\beta}\right)^k-1
=
(1+u)^k-1
\ge
ku \quad \text{by Bernoulli's inequality.}
\]

Therefore,
\[
\frac{\alpha-\beta}{\left(\frac{\alpha}{\beta}\right)^k-1}
\le
\frac{2\varepsilon}{k\cdot \frac{2\varepsilon}{1-\varepsilon}}
=
\frac{1-\varepsilon}{k}.
\]

Thus
\[
L \le \alpha+\frac{1-\varepsilon}{k}.
\]
This implies the inequality in the lemma. 
\end{proof}

Using Lemma~\ref{expectedvalueshell}, we obtain the following result.

\begin{proposition}\label{propshell}
Let $x_1,\dots,x_n$ be i.i.d.\ sampled from $\mathcal{S}^d_{\varepsilon}$. Then
\[
\mathbb{P}_{\mu^{\otimes n}}\big( (x_1,\dots,x_n) \in \epsF \big)
>
1
-
2d \exp\!\left(
-\frac{24 {\varepsilon}^2 n}{(d+2)(1+\varepsilon)\left(3(d+2)(1+\varepsilon)+4d\varepsilon\right)}
\right).
\]
\end{proposition}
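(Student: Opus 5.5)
Every sample from $\mu^{\otimes n}$ is automatically $\varepsilon$-nearly equal-norm, because each $x_i$ lies in $\mathcal{S}^d_\varepsilon$. So the event $\{(x_1,\dots,x_n)\in\epsF\}$ is exactly the event $\{\|S-I_d\|_{\mathrm{op}}\le\varepsilon\}$. The plan is to bound the complement with the Matrix Bernstein Inequality (Theorem~\ref{bernstein}), as in the proof of Theorem~\ref{concent_sphere}. The new feature is that $\mathbb{E}[S]$ is no longer $I_d$, so part of the tolerance $\varepsilon$ must be spent on this bias.

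\textbf{Step 1 (bias).} Write $m\coloneqq\mathbb{E}\|v\|^2$ for $v\sim\mu$. By rotational invariance $\mathbb{E}[vv^*]=\frac{m}{d}I_d$, hence $\mathbb{E}[S]=\frac{nm}{d}I_d$. The triangle inequality gives
\[
\|S-I_d\|_{\mathrm{op}}\le \|S-\mathbb{E}S\|_{\mathrm{op}}+\Bigl|\tfrac{nm}{d}-1\Bigr|.
\]
Lemma~\ref{expectedvalueshell} yields
\[
\tfrac{nm}{d}\le \tfrac{d}{d+2}\Bigl(1+\varepsilon+\tfrac{2(1-\varepsilon)}{d}\Bigr)=1+\tfrac{(d-2)\varepsilon}{d+2}.
\]
From the identity $L=\alpha+(\text{positive})$ in that proof we get $L\ge 1+\varepsilon$, which gives a matching lower bound for $\frac{nm}{d}$. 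I expect both sides to be controlled by $\frac{(d-2)\varepsilon}{d+2}$, possibly after a short case check of the lower side. It then suffices to bound $\mathbb{P}(\|S-\mathbb{E}S\|_{\mathrm{op}}\ge t)$ with the leftover tolerance
\[
t=\varepsilon-\tfrac{(d-2)\varepsilon}{d+2}=\tfrac{4\varepsilon}{d+2}.
\]

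\textbf{Step 2 (Bernstein parameters).} Set $Y_i=x_ix_i^*-\frac{m}{d}I_d$, so the $Y_i$ are i.i.d.\ and centered. The eigenvalues of $Y_i$ are $\|x_i\|^2-\frac{m}{d}$ and $-\frac{m}{d}$, so
\[
\|Y_i\|_{\mathrm{op}}\le A\coloneqq\tfrac{d(1+\varepsilon)}{n}.
\]
For the variance, $\mathbb{E}[Y_i^2]\preceq\mathbb{E}[\|x_i\|^2x_ix_i^*]\preceq\frac{d(1+\varepsilon)}{n}\cdot\frac{m}{d}I_d$. Summing over $i$ and inserting the Step 1 bound on $\frac{nm}{d}$ gives
\[
\sigma^2\le (1+\varepsilon)\tfrac{nm}{d}\cdot\tfrac{d}{n}.
\]
This still has to be simplified to the form that produces the factor $(d+2)(1+\varepsilon)$ in the statement.

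\textbf{Step 3 (assemble).} Plug $t=\frac{4\varepsilon}{d+2}$, $A$ and $\sigma^2$ into $2d\exp\bigl(-\frac{t^2/2}{\sigma^2+At/3}\bigr)$. With $t^2/2=\frac{8\varepsilon^2}{(d+2)^2}$ and $At/3=\frac{4d\varepsilon(1+\varepsilon)}{3n(d+2)}$, clearing denominators should give the exponent
\[
\frac{24\varepsilon^2 n}{(d+2)(1+\varepsilon)\bigl(3(d+2)(1+\varepsilon)+4d\varepsilon\bigr)}.
\]
The strict inequality in the statement then comes from passing from the event $\{\|S-I_d\|_{\mathrm{op}}>\varepsilon\}$ to the Bernstein event $\{\ge t\}$, together with the slack in Step 1.

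\textbf{Main obstacle.} The hardest part is the bookkeeping: matching the exact $d$-dependence of the stated exponent. This requires choosing the right bound on $\sigma^2$, where I expect a factor $d$ to cancel against the $\frac{d}{n}$ scaling. It also requires ensuring the two-sided bias estimate $|\frac{nm}{d}-1|\le\frac{(d-2)\varepsilon}{d+2}$ really holds, which means checking the lower bound from $L\ge1+\varepsilon$. If the lower side fails for small $d$, I would instead take $t=\varepsilon-|\frac{nm}{d}-1|$ directly and bound it from below.
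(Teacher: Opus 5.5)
\textbf{Gap: Step~3 does not deliver the stated exponent, and no bookkeeping will.} Your plan is the paper's route: absorb the bias $M=\frac nd\mathbb{E}\|v\|^2$ by the triangle inequality, then apply Matrix Bernstein with $A=\frac dn(1+\varepsilon)$, a variance bound of order $\frac dn(1+\varepsilon)^2$, and margin $t\ge\frac{4\varepsilon}{d+2}$.

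Using $1+\frac{(d-2)\varepsilon}{d+2}\le1+\varepsilon$, your Step~2 gives $\sigma^2\le\frac dn(1+\varepsilon)^2$. Substituting into $\frac{t^2/2}{\sigma^2+At/3}$ gives
\[
\frac{24\varepsilon^2 n}{d(d+2)(1+\varepsilon)\bigl(3(d+2)(1+\varepsilon)+4\varepsilon\bigr)}.
\]
This is the claimed exponent with $3(d+2)(1+\varepsilon)+4d\varepsilon$ replaced by $3d(d+2)(1+\varepsilon)+4d\varepsilon$, so it is smaller by a factor of order $d$.

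The cancellation of a factor $d$ that you anticipate does not exist. By Jensen,
\[
\sum_i\mathbb{E}[Y_i^2]=\Bigl(\tfrac nd\mathbb{E}\|v\|^4-\tfrac n{d^2}(\mathbb{E}\|v\|^2)^2\Bigr)I_d\succeq\tfrac{(d-1)M^2}{n}I_d,
\]
which is the same order as in the sphere case. Nor can the margin be enlarged. From the identity for $L$,
\[
M-1=\varepsilon-\tfrac{2(1+\varepsilon)}{d+2}+O\bigl((\tfrac{1-\varepsilon}{1+\varepsilon})^{d/2}\bigr),
\]
so for fixed $\varepsilon$ the true margin is only about $\frac{2(1+\varepsilon)}{d+2}$.

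\textbf{The inequality as stated appears to be false for large $d$.} Take $\varepsilon=\frac12$ and $n=cd^3$ with $c$ small.
\begin{itemize}
\item The margin is about $\frac3d$.
\item Heuristically, the Marchenko--Pastur (semicircle) edge gives $\lambda_{\max}(S)-M\approx2M\sqrt{d/n}\approx\frac{3}{\sqrt c\,d}$, which is well beyond the margin. So $\mathbb{P}_{\mu^{\otimes n}}\bigl((x_1,\dots,x_n)\in\epsF\bigr)\to0$.
\item The stated exponent is about $\frac{24n}{39d^2}\approx0.6\,cd$, which forces the right-hand side to tend to $1$.
\end{itemize}

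The paper's proof contains the same slip. Its intermediate bound $2d\exp\bigl(-\frac{3t^2n}{6d(1+\varepsilon)^2+2d(1+\varepsilon)t}\bigr)$ is correct. But substituting $t=\frac{4\varepsilon}{d+2}$ into it gives the exponent displayed above, not the stated one. So what your argument (and the paper's) actually proves is the bound with the extra factor $d$. It also gives $\ge$ rather than $>$, since Bernstein yields no strictness.

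\textbf{Smaller point on the lower side of the bias.} The bound $L\ge1+\varepsilon$ only gives what you need when $\varepsilon(d-1)\ge1$. Instead, note that $\frac nd\|v\|^2$ has density proportional to $s^{d/2-1}$ on $[1-\varepsilon,1+\varepsilon]$. This density is nondecreasing for $d\ge2$, so its mean $M$ is at least the midpoint $1$.
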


\begin{proof}
We have 
\begin{align*}
\mathbb{P}_{\mu^{\otimes n}}\big( (x_1,\dots,x_n) \in \epsF \big) &= \mathbb{P}_{\mu^{\otimes n}}\Big(
(x_1,\dots,x_n)
\text{ is $\varepsilon$–nearly Parseval}
\Big)\\
&= \mathbb{P}_{\mu^{\otimes n}}\Big( \|S-I_d\|_{\mathrm{op}}\leq \varepsilon
\Big)\\
&= 1 - \mathbb{P}_{\mu^{\otimes n}}\Big( \|S-I_d\|_{\mathrm{op}} > \varepsilon
\Big)
\end{align*}

Proceeding as in the sphere and ball cases, we invoke
Theorem~\ref{bernstein} , the Matrix Bernstein inequality. Define $Y_i \coloneqq x_i x_i^{*} - \mathbb{E}\!\left[x_i x_i^{*}\right]$,
so that, clearly $\mathbb{E}[Y_i]=0$, and $\mathbb{E}\!\left[x_i x_i^{*}\right]= (\mathbb{E}\|v\|^2/d) I_d$ by the rotation invariant distribution. Next, we estimate the parameters in Theorem~\ref{bernstein}.

First, we need to find $R$, an upper bound for $\|Y_i\|_{\mathrm{op}}$. Since $\|x_i\|^2 \le \frac dn(1+\varepsilon)$ for all
$x_i \in \mathcal{S}^d_{\varepsilon}$, we have
\[
\|x_i x_i^{*}\|_{\mathrm{op}}=\|x_i\|^2
\le
\frac dn(1+\varepsilon).
\]
Therefore,
\begin{align*}
\|Y_i\|_{\mathrm{op}}
=
\left\|x_i x_i^{*}- \left(\frac{\mathbb{E}\|v\|^2}{d}\right) I_d\right\|_{\mathrm{op}}
&=
\frac{d}{n}(1+\varepsilon) - \frac{\mathbb{E}\|v\|^2}{d} \leq \frac dn(1+\varepsilon),
\end{align*}
 so we can take the last term as $R$.

Now we derive an expression for the variance parameter $\sigma^2$. We compute 
\begin{align*}
\sum_{i=1}^n \mathbb{E}[Y_i^2]
=
n \mathbb{E}[Y_i^2]&=n\mathbb{E}\left[\|x_i\|^2x_ix_i^* - 2\left(\frac{\mathbb{E}\|x_i\|^2}{d}\right) x_i x_i^{*}+ \left(\frac{\mathbb{E}\|x_i\|^2}{d}\right)^2I_d\right]\\
&=n\left(\frac{\mathbb{E}\left[\|x_i\|^4\right]}{d}-\frac{\left(\mathbb{E}\|x_i\|^2\right)^2}{d^2}\right)I_d.
\end{align*}
Taking operator norms and using
$\|x_i\|^2 \le \frac dn(1+\varepsilon)$, we obtain
\[
\sigma^2
\coloneqq
\left\|
\sum_{i=1}^n \mathbb{E}[Y_i^2]
\right\|_{\mathrm{op}}
\le n \left\|\frac{\mathbb{E}\left[\|x_i\|^4\right]}{d}I_d\right\|_{\mathrm{op}} \le
n \cdot
\frac{\big(\frac dn(1+\varepsilon)\big)^2}{d}
=
\frac dn(1+\varepsilon)^2.
\]

Therefore, we obtain
\[
\mathbb{P}\Big(\big\|S - M I_d\big\|_{\mathrm{op}} > t\Big)
\le
2d \exp\!\left(
-\frac{3 t^2 n}{6d(1+\varepsilon)^2 + 2d(1+\varepsilon)t}
\right),
\]
where $M \coloneqq n\left(\mathbb{E}[\|v\|^2]/d\right)$. By the triangle inequality for the operator norm, if
\[
\|S-I_d\|_{\mathrm{op}}-\|I_d-MI_d\|_{\mathrm{op}} > t,
\]
then
\[
\|S-MI_d\|_{\mathrm{op}} > t.
\]
Hence, the probability of the first event is bounded by the probability of
the second, and the previous tail bound also applies to
\[
\mathbb{P}\Big(\|S-I_d\|_{\mathrm{op}} > t+\|I_d-MI_d\|_{\mathrm{op}}\Big).
\]

Since our aim is an $\varepsilon$--nearly Parseval statement, we set
\[
\varepsilon = t+\|I_d-MI_d\|_{\mathrm{op}},
\qquad\text{i.e.,}\qquad
t=\varepsilon-\|I_d-MI_d\|_{\mathrm{op}}.
\]
To use the bound we must have $t>0$, so it remains to ensure
\[
\varepsilon-\|I_d-MI_d\|_{\mathrm{op}} = \varepsilon - |1 - M| > 0.
\]

By Lemma~\ref{expectedvalueshell}, we know the exact value and an upper bound for $M$
$$M=\frac{d}{d+2}
\cdot
\frac{
(1+\varepsilon)^{\frac{d+2}{2}}
-
(1-\varepsilon)^{\frac{d+2}{2}}
}{
(1+\varepsilon)^{\frac d2}
-
(1-\varepsilon)^{\frac d2}
} \leq
\frac{d}{d+2}
\cdot \left( 1+\varepsilon +\frac{2(1-\varepsilon)}{d}\right).$$

Viewing $M$ as a function of $d$, one checks that it is increasing in $d$ and
satisfies $M=1$ at $d=2$. Hence, for $d\ge 2$ we have $M\ge 1$, and therefore
\[
\|I_d - M I_d\|_{\mathrm{op}} = |1-M| = M-1.
\]
In particular,
\[
t
=
\varepsilon - \|I_d - M I_d\|_{\mathrm{op}}
=
\varepsilon - (M-1)
=
\varepsilon + 1 - M \geq \frac{4\varepsilon}{d+2} > 0.
\]

Substituting all of the parameters we have obtained into
Proposition~\ref{bernstein} yields the claimed result.

\end{proof}

We now have all the necessary ingredients to prove
Theorem~\ref{newpaulsen}.

\begin{proof}[Proof of Theorem~\ref{newpaulsen}]
    We define the radial projection map
\[
P : \mathcal{S}_{\varepsilon}^d
\longrightarrow
\sqrt{\frac{d}{n}}\, \mathbb{S}^{d-1} \quad \text{by} \quad P(v)
=
\frac{\sqrt{d/n}}{\|v\|} \, v .
\]

This map rescales each vector in the shell onto the sphere of radius $\sqrt{d/n}$. Since $\mu$ is rotation-invariant and $P(Qv) = QP(v)$ for every orthogonal transformation $Q \in O(d)$, it follows that the pushforward measure $P_{\#}\mu$ is also rotation-invariant on the sphere $\sqrt{d/n}\, \mathbb{S}^{d-1}$. By uniqueness of the normalized Haar measure on the sphere, we conclude that $P_{\#}\mu$ is precisely the uniform probability measure on $\sqrt{d/n}\, \mathbb{S}^{d-1}$.

The strategy for the rest of the proof is sketched as follows. After selecting $(x_i)_{i=1}^n \in \epsF$, we show that the projected vectors
$(P(x_i))_{i=1}^n$ are, with high probability,
$\varepsilon^2/d$--nearly Parseval.
Since they also have equal-norm with $\|P(x_i)\|=\sqrt{d/n}$, they belong to $\mathcal{F}_{d,n}^{\varepsilon^2/d}$, so that we can apply the result of Hamilton and Moitra
(Theorem~\ref{thm:hamilton_moitra}),
we obtain a sharper upper bound.
Moreover, we show that $(x_i)_{i=1}^n$ is close to
$(P(x_i))_{i=1}^n$.
We will then apply the triangle inequality to conclude the proof.

We begin by determining the relevant probability.
For this purpose, we introduce some notation and definitions.

\[
\mathcal{A}
\coloneqq
\left\{
\text{$\frac{\varepsilon^2}{d}$–nearly Parseval frames
with vectors in }
\sqrt{\frac{d}{n}}\, \mathbb{S}^{d-1}
\right\}.
\]

We are interested in the conditional probability $\mathbb{P}_{\nu}\Big(
(P(x_1),\dots,P(x_n))\in \mathcal{A}
\Big)$. Since $\epsF$ corresponds exactly to the
$\varepsilon$–nearly Parseval condition in $\bigl(\mathcal{S}_\varepsilon^d\bigr)^n$, this equals
\[
\mathbb{P}_{\mu^{\otimes n}}\Big(
(P(x_1),\dots,P(x_n))\in \mathcal{A}
\;\Big|\;
(x_1,\dots,x_n)
\text{ is $\varepsilon$–nearly Parseval}
\Big).
\]

Using complements, we write
\[
=
1
-
\mathbb{P}_{\mu^{\otimes n}}\Big(
(P(x_1),\dots,P(x_n))\notin \mathcal{A}
\;\Big|\;
(x_1,\dots,x_n)
\text{ is $\varepsilon$–nearly Parseval}
\Big).
\]

By the definition of conditional probability,
\[
=
1
-
\frac{
\mathbb{P}_{\mu^{\otimes n}}\Big(
(P(x_1),\dots,P(x_n))\notin \mathcal{A}
\;\text{ and }\;
(x_1,\dots,x_n)
\text{ are $\varepsilon$–nearly Parseval}
\Big)
}{
\mathbb{P}_{\mu^{\otimes n}}\Big(
(x_1,\dots,x_n)
\text{ is $\varepsilon$–nearly Parseval}
\Big)
}.
\]

Finally, we obtain the lower bound
\[
\geq
1
-
\frac{
\mathbb{P}_{\mu^{\otimes n}}\Big(
(P(x_1),\dots,P(x_n))\notin \mathcal{A}
\Big)
}{
\mathbb{P}_{\mu^{\otimes n}}\Big(
(x_1,\dots,x_n)
\text{ is $\varepsilon$–nearly Parseval}
\Big)
}.
\]

Since $P_{\#}\mu$ is the uniform probability measure on
$\mathbb{S}^{d-1}_{\sqrt{d/n}}$, the vectors
$P(x_1),\dots,P(x_n)$ are i.i.d.\ uniform on the sphere.
Hence
\[
\mathbb{P}_{\mu^{\otimes n}}\Big((P(x_1),\dots,P(x_n))\notin \mathcal{A}\Big)
\]
is exactly the probability in Theorem~\ref{concent_sphere}
with $\varepsilon$ replaced by $\varepsilon^2/d$.

Therefore, 
\[
\mathbb{P}_{\mu^{\otimes n}}\Big((P(x_1),\dots,P(x_n))\notin \mathcal{A}\Big)
\le
2d \exp\!\left(
-\,
\frac{n \varepsilon^4}{
2d\!\left(d^2-d+\frac{2\varepsilon^2(d-1)}{3}\right)
}
\right).
\]

By Proposition~\ref{propshell}, we have
$$\mathbb{P}_{\mu^{\otimes n}}\Big(
(x_1,\dots,x_n)
\text{ is $\varepsilon$–nearly Parseval}
\Big) > 1
-
2d \exp\!\left(
-\frac{24 {\varepsilon}^2 n}{(d+2)(1+\varepsilon)\left(3(d+2)(1+\varepsilon)+4d\varepsilon\right)}
\right)$$

Consequently, we obtain the following lower bound for the main probability:

\[
\mathbb{P}_{\nu}\Big(
(P(x_1),\dots,P(x_n))\in \mathcal{A}
\Big) \geq 1 - \frac{2d \exp\!\left(
-\,
\frac{n \varepsilon^4}{
2d \,\!\left(d^2-d+\frac{2\varepsilon^2(d-1)}{3}\right)
}
\right)}{1
-
2d \exp\!\left(
-\frac{24 {\varepsilon}^2 n}{(d+2)(1+\varepsilon)\left(3(d+2)(1+\varepsilon)+4d\varepsilon\right)}
\right)}
\]

After simplifying the expression in this probability bound,
we may write the following alternative lower bound.

$$\mathbb{P}_{\nu}\Big(
(P(x_1),\dots,P(x_n))\in \mathcal{A}
\Big) \geq 1 - \frac{2d \exp\!\left(
-\,
\frac{n \varepsilon^4}{
2d^3
}
\right)}{1
-
2d \exp\!\left(
-\frac{6 {\varepsilon}^2 n}{5(d+2)^2}
\right)}$$

We have now completed the analysis of the probability term appearing
in the theorem. In particular, for a randomly chosen
$(x_i)_{i=1}^n \in \epsF$, we have obtained a lower bound on the probability that $(z_i)_{i=1}^n$ with $z_i = P x_i$ forms an $\varepsilon^2/d$--nearly Parseval frame. Moreover, since each $z_i$ satisfies $\|z_i\| = \sqrt{d/n}$, the vectors automatically have equal-norm. Consequently,
$(z_i)_{i=1}^n$ belongs to
$\mathcal{F}_{d,n}^{\varepsilon^2/d}$ with at least the same probability. In this setting, applying 
Theorem~\ref{thm:hamilton_moitra} of Hamilton and Moitra yields
\[
\inf_{(y_i)_{i=1}^n \in \F}
d\!\left((y_i)_{i=1}^n,(z_i)_{i=1}^n\right)^2
\;\le\; 20\frac{\varepsilon^2}{d} d^2 = 20\varepsilon^2d.
\]

At this point, our main focus is to estimate $d\!\left((x_i)_{i=1}^n,(z_i)_{i=1}^n\right)^2$. We have
\[
x_i - z_i
= x_i\!\left(1 - \sqrt{\frac{d}{n}}\,\frac{1}{\|x_i\|}\right),
\]
so that
\[
\|x_i - z_i\|^2
= \|x_i\|^2
\left(1 - \sqrt{\frac{d}{n}}\,\frac{1}{\|x_i\|}\right)^2
=
\left(\|x_i\| - \sqrt{\frac{d}{n}}\right)^2.
\]
Summing over $i$, we have
\[
\sum_{i=1}^n \|x_i - z_i\|^2
=
\sum_{i=1}^n
\left(\|x_i\| - \sqrt{\frac{d}{n}}\right)^2.
\]
Note that
\[
\|x_i\|
\in
\left[
\sqrt{\frac{d}{n}}\sqrt{1-\varepsilon},
\,
\sqrt{\frac{d}{n}}\sqrt{1+\varepsilon}
\right], 
\]
which implies 
\[
\left|\|x_i\| - \sqrt{\frac{d}{n}}\right|
\le
\sqrt{\frac{d}{n}}
\left(
\sqrt{1+\varepsilon}
-
\sqrt{1-\varepsilon}
\right),
\]
yielding
\[
\sum_{i=1}^n \|x_i - z_i\|^2
\le
\sum_{i=1}^n
\frac{d}{n}
\left(
\sqrt{1+\varepsilon}
-
\sqrt{1-\varepsilon}
\right)^2
=
d
\left(
\sqrt{1+\varepsilon}
-
\sqrt{1-\varepsilon}
\right)^2.
\]
Next, observe that 
\[
\left(
\sqrt{1+\varepsilon}
-
\sqrt{1-\varepsilon}
\right)^2
=
2 - 2\sqrt{1-\varepsilon^2}
<
2 - 2(1-\varepsilon^2)
=
2\varepsilon^2,
\]
which implies
\[
d\!\left((x_i)_{i=1}^n,(z_i)_{i=1}^n\right)^2=\sum_{i=1}^n \|x_i - z_i\|^2
\le
2\varepsilon^2 d.
\]
By using triangle inequality, we have
\begin{align*}
    \inf_{(y_i)_{i=1}^n \in \F}d\!\left((x_i)_{i=1}^n,(y_i)_{i=1}^n\right) &\leq d\!\left((x_i)_{i=1}^n,(z_i)_{i=1}^n\right) + \inf_{(y_i)_{i=1}^n \in \F}d\!\left((y_i)_{i=1}^n,(z_i)_{i=1}^n\right) \\
    &\leq \sqrt{2\varepsilon^2 d} + \sqrt{20\varepsilon^2 d} = \left( \sqrt{2}+\sqrt{20} \right)\sqrt{\varepsilon^2 d}.
\end{align*}
After squaring both sides, we obtain the result.
\end{proof}

It was shown by Cahill and Casazza~\cite{cahill2013paulsen} that the Paulsen problem is equivalent to the \emph{projection problem} in operator theory. Using this equivalence, we have the following probabilistic result for the projection problem. The proof follows directly from~\cite[Theorem 4.1]{cahill2013paulsen} and Theorem~\ref{newpaulsen}.

\begin{corollary}
    Let $e_1,\ldots,e_n \in \R^n$ be the standard basis vectors. Let $P$ be a random projection to a $d$-dimensional subspace, sampled under the condition that 
    \[
    (1-\varepsilon)\frac{d}{n} \leq \|Pe_i\|^2 \leq (1+\varepsilon)\frac{d}{n},
    \]
    for all $i$, where $\|\cdot\|$ is the standard norm. Then there exists a projection $Q$ with $\|Qe_i\|^2 = \frac{d}{n}$, for all $i$, satisfying
    \[
    \sum_{i=1}^n \|Pe_i - Qe_i\|^2 \leq 4\left(\sqrt{20}+\sqrt{2}\right)^2 \varepsilon^2 d
    \]
    with high probability, provided $n$ is sufficiently large. 
\end{corollary}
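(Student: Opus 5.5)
The plan is to pass from the Paulsen problem to the projection problem using the Cahill--Casazza equivalence \cite[Theorem 4.1]{cahill2013paulsen}, and then apply Theorem~\ref{newpaulsen}. Write $P$ as $P = U U^*$ with $U \in \mathrm{St}(n,d)$. The rows $x_i$ of $U$ form a Parseval frame in $\R^d$. Since $P$ is an orthogonal projection, we have $\|Pe_i\|^2 = \langle Pe_i, e_i\rangle = \|U^*e_i\|^2 = \|x_i\|^2$. So the sampling condition on $P$ says exactly that $(x_i)_{i=1}^n$ is $\varepsilon$-nearly equal-norm. Being exactly Parseval, it is certainly $\varepsilon$-nearly Parseval, so $(x_i) \in \epsF$. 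I would interpret the ``random projection sampled under the condition'' so that the induced frame is distributed as in Theorem~\ref{newpaulsen}. That theorem then gives, with probability tending to $1$ as $n \to \infty$ (for fixed $d, \varepsilon$), an equal-norm Parseval frame $(y_i) \in \F$ with
\[
\sum_i \|x_i - y_i\|^2 \le (\sqrt{20}+\sqrt2)^2 \varepsilon^2 d.
\]

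Next, convert $(y_i)$ into a projection. Let $V$ be the matrix with rows $y_i$, so $V \in \mathrm{St}(n,d)$, and set $Q = VV^*$. Then $\|Qe_i\|^2 = \|y_i\|^2 = d/n$. The remaining task is to compare $\sum_i \|Pe_i - Qe_i\|^2 = \|UU^* - VV^*\|_{\mathrm F}^2$ with $\|U - V\|_{\mathrm F}^2$. The natural estimate is
\[
UU^* - VV^* = (U-V)U^* + V(U-V)^*.
\]
Each term has Frobenius norm at most $\|U-V\|_{\mathrm F}$, because $U^*$ and $V$ are isometries, so they have operator norm $1$. This yields
\[
\|P-Q\|_{\mathrm F}^2 \le 4\|U-V\|_{\mathrm F}^2 \le 4(\sqrt{20}+\sqrt2)^2\varepsilon^2 d,
\]
which is precisely the factor $4$ in the statement. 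This is the content of the easy direction of \cite[Theorem 4.1]{cahill2013paulsen}.

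The main obstacle is measure-theoretic rather than computational. Theorem~\ref{newpaulsen} is stated for the uniform measure $\nu$ on $\epsF$, which contains non-Parseval frames. A random projection conditioned on nearly equal norms instead gives exactly Parseval frames, that is, Haar measure on $\mathrm{St}(n,d)$ conditioned on the diagonal constraint. I would handle this in one of two ways. The first is to interpret the corollary loosely: ``with high probability'' is inherited through the correspondence, and the paper asserts the result follows directly. The second, more honestly, is to rerun the proof of Theorem~\ref{newpaulsen} in this model. Radially project rows to the sphere of radius $\sqrt{d/n}$, bound the distance from the projected frame to the original by $2\varepsilon^2 d$ exactly as before, and then control near-Parsevality of the projected frame. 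The latter should follow because $\|S_z - I\|_{\mathrm{op}}$ is small when the norms are nearly equal and $S_x = I$. Concretely, $S_z - I = \sum_i \bigl(\tfrac{d}{n\|x_i\|^2} - 1\bigr) x_i x_i^*$, which has operator norm at most $\varepsilon/(1-\varepsilon)$. However, this gives only $O(\varepsilon)$ rather than $\varepsilon^2/d$. So for the sharp constant one genuinely needs the concentration input, and I would cite Theorem~\ref{newpaulsen} for it, accepting that the probability is taken with respect to the sampling model identified with $\nu$.
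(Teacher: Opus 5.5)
Your deterministic reduction is correct, and it is exactly what the paper's proof invokes. With $P=UU^*$ and $U\in\mathrm{St}(n,d)$, the rows of $U$ form an exactly Parseval, $\varepsilon$-nearly equal-norm frame, and $UU^*-VV^*=(U-V)U^*+V(U-V)^*$ gives $\|P-Q\|_{\mathrm F}^2\le 4\|U-V\|_{\mathrm F}^2$, which is where the factor $4$ comes from.

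The genuine gap is the probabilistic transfer. You flag it but do not close it, and your first option cannot close it. The measure $\nu$ of Theorem~\ref{newpaulsen} is a normalized restriction of $\mu^{\otimes n}$, so it is absolutely continuous with respect to Lebesgue measure on $\R^{n\times d}$. Every frame obtained from a projection lies in $\mathrm{St}(n,d)$, a submanifold of codimension $d(d+1)/2$. These frames therefore form a $\nu$-null set. A bound holding with $\nu$-probability $1-\delta$ is compatible with its failing for every exactly Parseval frame, so it says nothing about the law of $(x_i)$ induced by a conditioned random projection. Likewise, \cite[Theorem 4.1]{cahill2013paulsen} transfers worst-case bounds valid for every frame in $\epsF$, not high-probability statements for a particular measure.

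You could tie projections to $\nu$ by letting $P=US^{-1}U^*$ be the projection onto the column space of a $\nu$-random $U$. Since $\|US^{-1/2}-U\|_{\mathrm F}^2=O(\varepsilon^2 d)$, your argument then gives an $O(\varepsilon^2 d)$ bound. However, the constant changes, the diagonal constraint holds only at tolerance $2\varepsilon/(1-\varepsilon)$, and the law of $P$ is not the one in the statement.

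Your second option fails for the reason you give. Deterministically $\|S_z-I_d\|_{\mathrm{op}}\le\varepsilon/(1-\varepsilon)$, and Theorem~\ref{thm:hamilton_moitra} then gives only $O(\varepsilon d^2)$, no better than applying it to $(x_i)$ directly. What is missing is a new concentration estimate in the projection model itself. For $U$ Haar-distributed on $\mathrm{St}(n,d)$ and conditioned on the diagonal constraint, you would need $\|S_z-I_d\|_{\mathrm{op}}=\|\sum_i c_iu_iu_i^*\|_{\mathrm{op}}\le\varepsilon^2/d$ with high probability, where $c_i=\frac{d}{n\|u_i\|^2}-1$. This operator is traceless and is a sum of $n$ terms of norm $O(\varepsilon d/n)$, so cancellation for large $n$ is plausible. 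But neither ingredient of the proof of Theorem~\ref{newpaulsen} survives:
\begin{itemize}
\item The directions $u_i/\|u_i\|$ are not independent, so Theorem~\ref{concent_sphere} does not apply.
\item The conditioning event does not have probability close to $1$. For fixed $d,\varepsilon$ each $n\|u_i\|^2$ is approximately $\chi^2_d$, so the probability that all $n$ rows lie in the band tends to $0$. Hence the bound $\mathbb{P}(A\mid B)\ge 1-\mathbb{P}(A^c)/\mathbb{P}(B)$, used with Proposition~\ref{propshell}, has no analogue.
\end{itemize}

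The paper's proof consists only of citing \cite[Theorem 4.1]{cahill2013paulsen} and Theorem~\ref{newpaulsen}, so it passes over exactly this point. You diagnosed the issue correctly, but neither your argument nor that citation proves the corollary for a projection sampled under the stated condition.
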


\subsection{On the Optimal Paulsen Bound}\label{sec:optimal_Paulsen_Bound}

The goal of this subsection is to clarify the theoretical order of an optimal bound $f(\varepsilon,d,n)$ in the Paulsen problem. As was mentioned above, it was shown in \cite{casazza2013kadison}, via a family of examples, that the Paulsen problem must at least be on the order of $\varepsilon^2 d$. On the other hand, quoting from~\cite{cahill2013paulsen}: ``For all examples we know at this time, we have
\(
f(\varepsilon,d,n) \leq 16\varepsilon d
\).''
The papers \cite{kwok2018paulsen} and \cite{hamilton2021paulsen} 
state that the optimal theoretical bound in the Paulsen problem must be at least of order $\varepsilon d$, both citing~\cite{cahill2013paulsen} as the source. However, as far as we can tell, there is no such theoretical bound established in that paper, or anywhere else in the literature. Our goal is to clarify the situation by proving:

\begin{proposition}\label{prop:paulsen_bound}
    If a bound $f(\varepsilon,d,n)$ for the Paulsen problem is a polynomial in $\varepsilon$ and $d$, then it must be of order at least $O(\varepsilon d)$. 
\end{proposition}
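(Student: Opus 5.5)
The plan is to exhibit, for every $d$ and every small $\varepsilon$, an explicit frame in $\mathcal{F}_{2,4}^{\varepsilon}$, and then its $d/2$-fold analogue in $\mathcal{F}_{d,2d}^{\varepsilon}$ (taking $d$ even and $n=2d$), whose squared distance to $\mathcal{F}_{d,2d}$ is at least $c\,\varepsilon d$ for an absolute constant $c>0$. Any admissible $f(\varepsilon,d,n)$ must then satisfy $f(\varepsilon,d,2d)\ge c\,\varepsilon d$ along this family. If $f$ is a polynomial in $\varepsilon$ and $d$, letting $\varepsilon\to 0$ shows it cannot have only terms of order $\varepsilon^2$ or higher in $\varepsilon$. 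Letting $d\to\infty$ shows its linear-in-$\varepsilon$ part has degree at least one in $d$. Together these give order at least $\varepsilon d$.

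\emph{Step 1 (the $d=2$, $n=4$ building block).} Start from the example in Equation~(38) of \cite{casazza2012auto}, as used by Ramachandran. This is a one-parameter family of four vectors in $\R^2$, each with squared norm within a factor $1\pm\varepsilon$ of $1/2$, whose frame operator is within $\varepsilon$ of $I_2$. The mechanism I expect is that the vectors have equal norm only up to $O(\varepsilon)$, but to become exactly equal-norm \emph{and} Parseval some vectors must rotate by an angle of order $\sqrt{\varepsilon}$ rather than $\varepsilon$. This is because the ENPFs of $4$ vectors in $\R^2$ near a degenerate configuration (pairs of parallel vectors) are described by a square-root type equation. I would verify the two membership conditions for $\mathcal{F}^{\varepsilon}_{2,4}$ directly, then prove the lower bound $\inf_{y\in\mathcal{F}_{2,4}} d(x,y)^2\ge c\varepsilon$. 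For the lower bound I would use the explicit description of ENPFs in $\R^2$: a four-vector frame $(\tfrac{1}{\sqrt2}(\cos\theta_j,\sin\theta_j))_j$ is Parseval if and only if $\sum_j e^{2i\theta_j}=0$, so four unit complex numbers sum to zero. Such quadruples are exactly pairs of antipodal points. Measuring the distance from the example's angles to this set should give the $\sqrt{\varepsilon}$ angular displacement, hence a squared distance of order $\varepsilon$.

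\emph{Step 2 (tensoring up to $n=2d$).} Write $\R^d=W_1\oplus\cdots\oplus W_{d/2}$ with $W_k=\mathrm{span}(e_{2k-1},e_{2k})$. Place a rescaled copy of the Step 1 frame in each $W_k$. Squared norms become $\frac{d}{n}(1\pm\varepsilon)=\frac12(1\pm\varepsilon)$, which is unchanged since $d/n=1/2$. The frame operator is block diagonal with blocks within $\varepsilon$ of $I_2$, so the resulting frame $x$ lies in $\mathcal{F}^{\varepsilon}_{d,2d}$.

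\emph{Step 3 (the lower bound, main obstacle).} The difficulty is that a nearest ENPF $y$ need not respect the block decomposition, so the $d=2$ bound does not transfer verbatim. My plan is a localization argument. Let $P_k$ be the orthogonal projection onto $W_k$, and let $I_k$ be the four indices of block $k$. Parsevality of $y$ gives $\sum_i P_k y_iy_i^*P_k=P_k$. If $d(x,y)^2\le c\,\varepsilon d$ with $c$ small, then by averaging, at least half of the blocks $k$ satisfy $\sum_{i\in I_k}\|x_i-y_i\|^2\le 4c\varepsilon$ and $\sum_{i\notin I_k}\|P_ky_i\|^2\le 4c\varepsilon$. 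For such a block, the four vectors $P_ky_i$ with $i\in I_k$ form, up to an $O(c\varepsilon)$ perturbation in frame operator and norms, an ENPF of four vectors in $\R^2$. The hard part is the robust version of Step 1: an $O(\delta)$-perturbed ENPF must still be at angular distance at least of order $\sqrt{\varepsilon}$ from the example when $\delta\ll\varepsilon$. I would try to get this from a quantitative form of the antipodal-pairs characterization, namely that $|\sum_j e^{2i\theta_j}|\le\delta$ forces near-antipodality up to $O(\delta)$. Given this, each good block contributes at least $c'\varepsilon$. Summing over the $\ge d/4$ good blocks yields $d(x,y)^2\ge c'\varepsilon d/4$, which contradicts $d(x,y)^2\le c\,\varepsilon d$ once $c<c'/4$. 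Hence $\inf_{y\in\mathcal{F}_{d,2d}}d(x,y)^2\ge \min(c,c'/4)\,\varepsilon d$.
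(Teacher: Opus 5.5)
Your Steps 1 and 2 match the paper: Ramachandran's $d=2$, $n=4$ example, stacked block-diagonally to $d=2k$, $n=4k$. The example is exactly equal-norm, has frame operator $\mathrm{diag}(\cos^2 2\theta,\,1+\sin^2 2\theta)$, and lies at squared distance at least $\sin^2\theta\ge\varepsilon/4$ from $\mathcal{F}_{2,4}$.

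\textbf{Step 3: a different and more reliable route.} The paper asserts that the nearest ENPF to $U_k$ is exactly $V_k$. Its argument is that a minimizer $H$ must be block diagonal, because each column term $\|f_i-h_i'\|^2+\|h_i''\|^2$ is individually smallest when $h_i''=0$. This ignores that the Parseval constraint couples the columns, and the obstacle you flag is real. For $k=2$, take the frame with columns $\tfrac{1}{\sqrt2}(\cos\theta,\pm\sin\theta,0,0)$, $\tfrac{1}{\sqrt2}(0,\cos\theta,\pm\sin\theta,0)$, $\tfrac{1}{\sqrt2}(0,0,\cos\theta,\pm\sin\theta)$, $\tfrac{1}{\sqrt2}(\pm\sin\theta,0,0,\cos\theta)$.
\begin{itemize}
\item It is an ENPF: the four pairs contribute diagonal matrices summing to $I_4$.
\item It is not block diagonal.
\item It is exactly as close to $U_2$ as $V_2$ is, since in both every column is rotated by $\theta$.
\end{itemize}
So a nearest ENPF need not be block diagonal. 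Your localization argument yields only a constant-factor lower bound, not the exact minimizer. That is all the proposition needs, and it is insensitive to such cross-block competitors. The identity $\sum_i P_ky_iy_i^*P_k=P_k$ and the reduction of a good block to a perturbed ENPF in $W_k$ are correct.

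\textbf{The lemma for the robust step is false.} It is not true that $|\sum_j e^{2i\theta_j}|\le\delta$ forces antipodality up to $O(\delta)$. Your own building block refutes it: $(w_j)=(e^{4i\theta},e^{-4i\theta},-1,-1)$ has $|\sum_j w_j|=4\sin^2 2\theta$, yet every pairing has defect at least $2\sin 2\theta=|\sum_j w_j|^{1/2}$. This square-root loss is exactly what the proposition exploits.

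The correct, sharp statement is near-antipodality up to $O(\sqrt{\delta})$, and it suffices. Set $a_k=\sum_{i\in I_k}\|x_i-y_i\|^2$ and $b_k=\sum_{i\notin I_k}\|P_ky_i\|^2$. The vectors $z_i=P_ky_i$ ($i\in I_k$) satisfy $\tfrac12-a_k\le\|z_i\|^2\le\tfrac12$ and $\|\sum_{i\in I_k}z_iz_i^*-I_2\|_{\mathrm{op}}\le b_k$. Hence they lie in $\mathcal{F}_{2,4}^{\delta}$ with $\delta=\max(2a_k,b_k)$. Theorem~\ref{thm:hamilton_moitra} with $d=2$ (or the corrected antipodal lemma) then gives an exact ENPF within squared distance $80\delta$ of $z$. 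Two further facts hold: $\|x_i-z_i\|\le\|x_i-y_i\|$, and the block of $x$ is at distance at least $\sqrt{\varepsilon}/2$ from every ENPF. Together these give $\sqrt{a_k}\ge\sqrt{\varepsilon}/2-\sqrt{80\delta}$. This is incompatible with $a_k,b_k\le 4c\varepsilon$ once $c$ is a small absolute constant.

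\textbf{Averaging count.} With threshold $4c\varepsilon$, Markov's inequality only makes fewer than half the blocks bad for each condition separately. So you are guaranteed one good block, not half of them. A single good block already gives the contradiction, so the summation over blocks is unnecessary.
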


\begin{remark}
    Theorem~\ref{newpaulsen} gives a bound of the form $O(\varepsilon^2 d)$. Since the theorem is probabilistic, this does not contradict the proposition.
\end{remark}

The proof proceeds via a family of examples based on the example appearing in Equation~(38) of \cite{casazza2012auto}, which was later presented in greater detail, together with a proof, in Ramachandran's thesis~\cite{ramachandran2021geodesic}.

\begin{example}[{\cite[Example A.1.1]{ramachandran2021geodesic}}]
    For a fixed $\theta$ which is small enough, consider the synthesis operator $U$ where the columns represent a frame:
\[
U \coloneqq \frac{\sqrt{2}}{2}\begin{bmatrix} \cos(2\theta) & \cos(2\theta) & 0 & 0 \\ \sin(2\theta) & -\sin(2\theta) & 1 & 1 \end{bmatrix}.
\]
The corresponding frame of $U$ is in $\mathcal{F}_{2,4}^{\varepsilon}$ where $\varepsilon \leq 4 \sin^2\theta$.
The closest equal-norm Parseval frame (ENPF) to $U$ has synthesis operator
\[
V \coloneqq \frac{\sqrt{2}}{2}\begin{bmatrix} \cos(\theta) & \cos(-\theta) & \sin(-\theta) & \sin(\theta) \\ \sin(\theta) & \sin(-\theta) & \cos(-\theta) & \cos(\theta) \end{bmatrix},
\]
 and the squared distance is $\|U-V\|_{\mathrm{F}} \geq \sin^2\theta \geq \varepsilon/4$.
\end{example}

 \begin{remark}
     This example is exactly equal norm, and its frame operator is $I+O(\theta^2)$ for small values of $\theta$. Geometrically, this is saying that his family of examples is contained in the equal norm locus and is tangent to the Parseval locus. Thus this is a particularly structured example and in retrospect is should not be surprising that it manages to stay closer to the equal norm and Parseval locus than a generic perturbation.
 \end{remark}

By using this example, we generate the next one.

\begin{example}\label{ex2}
Let $k$ be a positive integer and let $d=2k$ and $n=4k$. Let $U_k$ and $V_k$ be the following matrices
\[U_k \coloneqq \begin{bmatrix} U & 0 & \cdots & 0 \\ 0 & U & \cdots & 0 \\ \vdots & \vdots & \ddots & \vdots \\ 0 & 0 & \cdots & U \end{bmatrix} \qquad \text{and} \qquad V_k\coloneqq \begin{bmatrix} V & 0 & \cdots & 0 \\ 0 & V & \cdots & 0 \\ \vdots & \vdots & \ddots & \vdots \\ 0 & 0 & \cdots & V \end{bmatrix}\]
where $U$ and $V$ are the matrices defined in the previous example. Let $\varepsilon$ be the same number from the previous example.
\end{example}

\begin{lemma}
The frames represented by $U_k$ and $V_k$ belong to $\epsF$ and $\F$, respectively.
\end{lemma}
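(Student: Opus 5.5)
The plan is to reduce everything to the $2\times 4$ building block, using the block-diagonal structure of $U_k$ and $V_k$. The columns of $U_k$ (the frame vectors, in the synthesis-operator convention of the example) are exactly the columns of each copy of $U$, padded by zeros into the coordinates $2j-1,2j$ for the $j$-th block, $j=1,\dots,k$. The same holds for $V_k$ and $V$. Both properties in question reduce to the blocks:
\begin{itemize}
\item the frame operator is $S = U_kU_k^* = \operatorname{diag}(UU^*,\dots,UU^*)$;
\item each column norm of $U_k$ equals the corresponding column norm of $U$.
\end{itemize}

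\textbf{Step 1 ($V_k$).} By the cited example, $V$ is an equal-norm Parseval frame in $\R^2$ with $4$ vectors. So $VV^*=I_2$ and every column has squared norm $2/4=1/2$. Hence $V_kV_k^*=\operatorname{diag}(I_2,\dots,I_2)=I_{2k}$, and every column of $V_k$ has squared norm $1/2 = 2k/4k = d/n$. Therefore $V_k\in\mathcal{F}_{2k,4k}$. One can also check directly that $V$'s columns have norm $\tfrac{\sqrt2}{2}$ and that its rows are orthonormal, using $\cos^2\theta+\sin^2\theta=1$ and cancellation of the cross terms.

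\textbf{Step 2 ($U_k$).} Since $U$ lies in $\mathcal{F}_{2,4}^{\varepsilon}$, we have $\|UU^*-I_2\|_{\mathrm{op}}\le\varepsilon$, and each column satisfies $(1-\varepsilon)\tfrac12\le\|u_i\|^2\le(1+\varepsilon)\tfrac12$.
\begin{itemize}
\item The operator norm of a block-diagonal matrix is the maximum of the blocks' operator norms. Hence $\|U_kU_k^*-I_{2k}\|_{\mathrm{op}}=\|UU^*-I_2\|_{\mathrm{op}}\le\varepsilon$.
\item The column-norm condition is unchanged, because $d/n=1/2$ in both settings.
\end{itemize}
Therefore $U_k\in\mathcal{F}^{\varepsilon}_{2k,4k}$.

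\textbf{Main obstacle.} The only real subtlety is verifying the base claims about $U$ and $V$, which the example asserts but which I would check explicitly. Computing directly,
\[
UU^*=\begin{bmatrix}\cos^2 2\theta & 0\\ 0 & \sin^2 2\theta + 1\end{bmatrix}.
\]
This gives
\[
\|UU^*-I\|_{\mathrm{op}}=\sin^2 2\theta = 4\sin^2\theta\cos^2\theta\le 4\sin^2\theta .
\]
The columns of $U$ have squared norm exactly $\tfrac12$, so $U$ is exactly equal-norm. Consequently $U\in\mathcal{F}^{\varepsilon}_{2,4}$ with $\varepsilon=4\sin^2\theta$, or any $\varepsilon\ge\sin^22\theta$. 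With these base facts in hand, the block-diagonal reduction above completes the argument.
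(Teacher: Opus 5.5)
Your proof is correct and matches the paper's argument: block-diagonality gives $U_kU_k^*=\operatorname{diag}(UU^*,\dots,UU^*)$, which the paper phrases as the eigenvalues of $UU^*$ repeated with multiplicity $k$ and you phrase as the maximum of the blockwise operator norms. Zero-padding also preserves the column norms, with $d/n=1/2$ unchanged; your explicit checks that $UU^*=\operatorname{diag}(\cos^2 2\theta,\,1+\sin^2 2\theta)$ and $VV^*=I_2$ go slightly beyond the paper, which takes these base facts from the cited example, and you correctly identify the admissible range as $\varepsilon\ge\sin^2 2\theta$.
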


\begin{proof}
The frame vectors represented by $U_k$ have the same nonzero components as those of $U$, 
with additional zero entries. In particular, each column of $U_k$ has the same norm as the 
corresponding column of $U$, so the equal-norm condition is preserved. Moreover, 
$U_k U_k^*$ is a block diagonal matrix with each diagonal block equal to $UU^*$, and 
therefore has the same eigenvalues as $UU^*$, each with multiplicity $k$. Since $U$ 
represents an $\varepsilon$-nearly equal-norm Parseval frame, all eigenvalues of $UU^*$ lie 
between $1 - \varepsilon$ and $1 + \varepsilon$, and the same holds for $U_k U_k^*$. 
Hence $U_k$ represents a frame in $\mathcal{F}_{d,n}^{\varepsilon}$. The same argument 
applied to $V$ and $V_k$ shows that $V_k$ represents a frame in $\mathcal{F}_{d,n}$.
\end{proof}

\begin{lemma}\label{lem:offblock}
Let $a_1, a_2 \in \mathbb{R}$ with $a_1^2 + a_2^2 = 1/2$, and let $b_1, b_2, \ldots, b_d \in \mathbb{R}$ 
with $b_1^2 + b_2^2 + \cdots + b_d^2 = 1/2$. Then the expression
\[
Y = (a_1 - b_1)^2 + (a_2 - b_2)^2 + \sum_{i=3}^{d} b_i^2
\]
is minimized only when $b_3 = \cdots = b_d = 0$.
\end{lemma}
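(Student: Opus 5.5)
Fix $a_1,a_2$ and expand $Y$ using the two norm constraints. Since $a_1^2+a_2^2=1/2$ and $\sum_{i=1}^d b_i^2 = 1/2$, we get
\[
Y = a_1^2+a_2^2 + \sum_{i=1}^d b_i^2 - 2(a_1b_1+a_2b_2) = 1 - 2(a_1b_1+a_2b_2).
\]
So minimizing $Y$ over the sphere $\{b : \|b\|^2 = 1/2\}$ is the same as maximizing the linear functional $a_1b_1+a_2b_2 = \langle \tilde a, b\rangle$ on that sphere, where $\tilde a = (a_1,a_2,0,\dots,0)\in\R^d$. This reduces the lemma to the equality case of Cauchy--Schwarz.

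Next, apply Cauchy--Schwarz:
\[
\langle \tilde a,b\rangle \le \|\tilde a\|\,\|b\| = \tfrac12 .
\]
Since $\tilde a\neq 0$, equality holds if and only if $b$ is a nonnegative multiple of $\tilde a$. Because $\|b\| = \|\tilde a\|$, this forces $b=\tilde a$, i.e.\ $b_1=a_1$, $b_2=a_2$ and $b_3=\cdots=b_d=0$, with minimum value $Y=0$. Every minimizer therefore has vanishing tail coordinates, which is the claim.

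As an alternative that makes the role of the tail explicit, write $s=\sum_{i\ge3}b_i^2$. For fixed $s$, the pair $(b_1,b_2)$ lies on a circle of radius $\sqrt{1/2-s}$, so $a_1b_1+a_2b_2 \le \tfrac{1}{\sqrt2}\sqrt{1/2-s}$. This bound is strictly decreasing in $s$, so any $s>0$ gives a strictly larger $Y$ than $s=0$.

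There is no real obstacle here. The only points needing care are noting that $\tilde a\ne 0$, which follows from $a_1^2+a_2^2=1/2$, so that the Cauchy--Schwarz equality characterization is valid, and recognizing that "minimized only when" refers to the equality case, so strictness must be checked.
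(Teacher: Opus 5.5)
Your proof is correct and follows the same route as the paper, whose entire proof is the remark that the lemma ``follows from the Cauchy--Schwarz inequality.'' You expand to get $Y = 1 - 2(a_1b_1+a_2b_2)$ and then use the equality case of Cauchy--Schwarz with $\|b\| = \|\tilde a\| = 1/\sqrt{2}$ to force $b=(a_1,a_2,0,\dots,0)$; this is that same argument written out in full, including the strictness that ``minimized only when'' requires.
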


\begin{proof}
This result follows from the Cauchy--Schwarz inequality.
\end{proof}

\begin{lemma}
The frame represented by $V_k$ is the closest ENPF to the frame represented by $U_k$.
\end{lemma}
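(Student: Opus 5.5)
The plan is to reduce the comparison between $U_k$ and an arbitrary equal-norm Parseval frame (ENPF) $W\in\mathcal{F}_{d,n}$, with $d=2k$ and $n=4k$, to $k$ independent copies of the $d=2$, $n=4$ problem settled in \cite[Example A.1.1]{ramachandran2021geodesic}. First record the normalizations. Every ENPF in $\mathcal{F}_{2k,4k}$ has column norms $\|w_j\|^2=d/n=1/2$. Every column $u_j$ of $U_k$ also has $\|u_j\|^2=\tfrac12(\cos^2 2\theta+\sin^2 2\theta)=\tfrac12$ or $\tfrac12$, and is supported on the two coordinates of its diagonal block. Hence
\[
\|U_k-W\|_{\mathrm F}^2=\sum_{j=1}^{4k}\|u_j-w_j\|^2 ,
\]
and each summand has exactly the form $Y$ in Lemma~\ref{lem:offblock}: the $a$'s are the two in-block entries of $u_j$, and the $b$'s are the entries of $w_j$.

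The first step is to show that an optimal $W$ may be taken block diagonal, with the same $2\times 4$ block pattern as $U_k$. Lemma~\ref{lem:offblock} gives this column by column: for fixed $u_j$ and the norm constraint $\|w_j\|^2=\tfrac12$, the distance is strictly smaller when the off-block entries of $w_j$ vanish. Equivalently, $\|u_j-w_j\|^2=1-2\langle u_j,\pi_j w_j\rangle$, where $\pi_j$ is the coordinate projection onto the block of column $j$. So minimizing the distance amounts to maximizing $\sum_j\langle u_j,\pi_j w_j\rangle$, and off-block mass only wastes norm.

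The second step handles block-diagonal candidates. Suppose $W=\mathrm{diag}(W_1,\dots,W_k)$ is an ENPF. Then $WW^*=\mathrm{diag}(W_1W_1^*,\dots,W_kW_k^*)=I_{2k}$, and each $W_\ell$ has columns of squared norm $\tfrac12$, so each $W_\ell$ is an ENPF in $\mathcal{F}_{2,4}$. Consequently
\[
\|U_k-W\|_{\mathrm F}^2=\sum_{\ell=1}^k\|U-W_\ell\|_{\mathrm F}^2\ \ge\ k\,\|U-V\|_{\mathrm F}^2=\|U_k-V_k\|_{\mathrm F}^2,
\]
by the optimality of $V$ from the cited example. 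Equality holds for $V_k$, which finishes the argument once the first step is secured.

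The main obstacle is the first step. Lemma~\ref{lem:offblock} works column by column, but the ENPF constraint couples the columns through $WW^*=I$, so zeroing off-block entries and renormalizing may destroy Parsevality. I would address this in two ways.

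\begin{itemize}
\item Use the trace formulation: maximize $\operatorname{tr}(U_k^*W)=\sum_\ell\operatorname{tr}(U^*W_{\ell\ell})$ over ENPFs $W$, where $W_{\ell\ell}$ denotes the diagonal $2\times4$ blocks of $W$. Only the diagonal blocks enter the objective. The rows of each $W_{\ell\ell}$ stay orthonormal-contractive, since $W_{\ell\ell}W_{\ell\ell}^*\preceq I_2$, and $\|(W_{\ell\ell})_j\|^2\le\tfrac12$. So the objective is bounded by the value of the relaxed $2\times4$ problem with these inequality constraints.
\item Verify, by a perturbation or Lagrange-multiplier check near $\theta\to0$, that the relaxed $2\times4$ optimum is still attained at $V$. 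The constraints should be active there, because increasing norms only helps the inner product.
\end{itemize}

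If this relaxation step fails, the fallback is to prove the claim only for $\theta$ small. In that regime any near-optimal $W$ is close to $V_k$, and a local argument on the off-block part suffices.
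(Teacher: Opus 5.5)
Your diagnosis of Step~1 is right, and it applies verbatim to the paper's own proof. The paper uses Lemma~\ref{lem:offblock} column by column and concludes that ``the minimizer $H$ must be block diagonal'' without addressing the coupling through $HH^*=I$. That conclusion is in fact false for $k\ge 2$. With $c=\cos\theta$ and $s=\sin\theta$, let $W$ be the $4\times 8$ matrix with rows $\tfrac{1}{\sqrt2}(c,c,0,0,0,0,-s,s)$, $\tfrac{1}{\sqrt2}(s,-s,c,c,0,0,0,0)$, $\tfrac{1}{\sqrt2}(0,0,s,-s,c,c,0,0)$, $\tfrac{1}{\sqrt2}(0,0,0,0,s,-s,c,c)$. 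This $W$ is an ENPF with nonzero off-block entries, and $\operatorname{tr}(U_2^*W)=4\cos\theta=\operatorname{tr}(U_2^*V_2)$, so it is exactly as close to $U_2$ as $V_2$ is. Hence any argument that tries to force the off-block part to vanish cannot succeed, and that includes your small-$\theta$ fallback. Everything therefore rests on your relaxation.

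There the proposal has a genuine gap. The one inequality it needs, $\operatorname{tr}(U^*X)\le\operatorname{tr}(U^*V)=2\cos\theta$ for all $X\in\R^{2\times4}$ with $XX^*\preceq I_2$ and $\|x_j\|^2\le\tfrac12$, is only conjectured. A perturbative check as $\theta\to0$ gives local information at best. The heuristic ``increasing norms only helps'' also does not single out $V$. Without the column constraints, the optimum would be the nuclear norm $\cos2\theta+\sqrt{1+\sin^2 2\theta}$ of $U$, which exceeds $2\cos\theta$ for small $\theta$. Even with the column constraints, the relaxed maximizers form a segment containing $V$ rather than an isolated point.

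What closes the gap is convexity of the relaxed problem, which makes an explicit multiplier certificate a global proof. Take $\Lambda=\mathrm{diag}(0,\sec\theta)$ and $M=\frac{\cos2\theta}{\cos\theta}\,\mathrm{diag}(1,1,0,0)$; a column-by-column check gives $U=\Lambda V+VM$. Write $r_2(\cdot)$ for the second row, so that $\|r_2(X)\|\le 1$ whenever $XX^*\preceq I_2$. Then for every relaxed $X$ and $0<\theta\le\pi/4$,
\[
\operatorname{tr}(U^*X)=\sec\theta\,\langle r_2(V),r_2(X)\rangle+\frac{\cos2\theta}{\cos\theta}\bigl(\langle v_1,x_1\rangle+\langle v_2,x_2\rangle\bigr)\le\sec\theta+\frac{\cos2\theta}{\cos\theta}=2\cos\theta .
\]
Applying this to each diagonal block $W_{\ell\ell}$ gives $\|U_k-W\|_{\mathrm F}^2=4k-2\operatorname{tr}(U_k^*W)\ge 4k(1-\cos\theta)=\|U_k-V_k\|_{\mathrm F}^2$ for every ENPF $W$. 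This completes your argument. It needs neither Lemma~\ref{lem:offblock}, nor block-diagonality of minimizers, nor the cited optimality of $V$ in $\mathcal{F}_{2,4}$; the case $k=1$ reproves that optimality.
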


\begin{proof}
Let $H$ be the synthesis operator of an arbitrary frame in $\mathcal{F}_{2k,4k}$. We decompose $H$ as 
$H = H^{\mathrm{diag}} + H^{\mathrm{off}}$, where $H^{\mathrm{diag}}$ retains the 
$2 \times 4$ diagonal blocks of $H$ and sets all other entries to zero, and $H^{\mathrm{off}}$ 
retains the remaining entries and sets the diagonal blocks to zero. Since $U_k$ is block diagonal, 
$U_k - H^{\mathrm{diag}}$ and $H^{\mathrm{off}}$ have disjoint support, and therefore
\[
\|U_k - H\|_{\mathrm{F}}^2 = \|U_k - H^{\mathrm{diag}}\|_{\mathrm{F}}^2 + \|H^{\mathrm{off}}\|_{\mathrm{F}}^2.
\]
Denote the columns of $U_k$ by $f_1, \ldots, f_{4k}$ and the columns of $H$ by $h_1, \ldots, h_{4k}$. 
For each $i$, write $h_i = h_i' + h_i''$, where $h_i'$ is the part supported on the diagonal 
block containing $f_i$ and $h_i''$ is the remaining part. Since $\|f_i\|^2 = \|h_i\|^2 = \frac{1}{2}$ and 
$\|h_i\|^2 = \|h_i'\|^2 + \|h_i''\|^2$, we have
\[
\|U_k - H\|_{\mathrm{F}}^2 = \sum_{i=1}^{4k} \left(\|f_i - h_i'\|^2 + \|h_i''\|^2\right).
\]
Each summand has the form treated in Lemma~\ref{lem:offblock}: the nonzero components of $f_i$ 
play the role of $(a_1, a_2)$ and the components of $h_i$ play the role of $(b_1, \ldots, b_d)$, 
all with fixed squared norm $\frac{1}{2}$. By the lemma, each summand is minimized only when $h_i'' = 0$. 
Therefore the minimizer $H$ must be block diagonal.

Since $H$ is block diagonal and represents an equal-norm Parseval frame, each $2 \times 4$ 
diagonal block $H_i$ of $H$ itself represents an ENPF. Therefore
\[
\|U_k - H\|_{\mathrm{F}}^2 = \sum_{i=1}^{k} \|H_i - U\|_{\mathrm{F}}^2 \geq k\,\|V - U\|_{\mathrm{F}}^2,
\]
where the inequality follows from the fact that the frame represented by $V$ is the closest ENPF to the frame represented by $U$. This lower bound is attained when 
$H = V_k$, so the frame represented by $V_k$ is the closest ENPF to the 
frame represented by $U_k$.
\end{proof}

Combining the above lemmas yields: 

\begin{proposition}
The minimum squared distance from the frame represented by $U_k$ to $\F$ satisfies
\[
\|V_k - U_k\|_{\mathrm{F}}^2 = k\,\|V - U\|_{\mathrm{F}}^2 \geq  k \cdot \frac{\varepsilon}{4} = \frac{d\,\varepsilon}{8}.
\]
In particular, for any $\varepsilon$-nearly ENPF in the regime $d = 2k$, $n = 4k$,  the distance to the closest ENPF grows linearly in $\varepsilon$, showing that the bound of order $\varepsilon^2 d$ cannot hold uniformly over all 
$\varepsilon$-nearly ENPFs.
\end{proposition}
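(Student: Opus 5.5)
The proposition follows by combining the three preceding lemmas with the single-block estimate from Ramachandran's example; I would verify the equality and the inequality separately.

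\emph{Equality.} By the last lemma, the frame of $V_k$ is the closest ENPF to the frame of $U_k$. Hence the minimum squared distance from $U_k$ to $\F$ equals $\|V_k-U_k\|_{\mathrm F}^2$. Since $V_k-U_k$ is block diagonal with $k$ copies of the block $V-U$, and the Frobenius norm squared is additive over disjoint supports, we get $\|V_k-U_k\|_{\mathrm F}^2=k\|V-U\|_{\mathrm F}^2$.

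\emph{Inequality.} The example asserts that $\|U-V\|_{\mathrm F}^2 \ge \sin^2\theta$ (the squared distance) and that the frame of $U$ lies in $\mathcal F_{2,4}^{\varepsilon}$ with $\varepsilon\le 4\sin^2\theta$. Together these give $\|U-V\|_{\mathrm F}^2\ge \varepsilon/4$. Multiplying by $k$ and substituting $k=d/2$ yields $k\varepsilon/4=d\varepsilon/8$. The earlier lemma guarantees that $U_k$ represents a frame in $\epsF$ for the same $\varepsilon$ when $d=2k$ and $n=4k$.

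The main subtlety is the role of $\varepsilon$. To conclude that no bound of order $\varepsilon^2 d$ can hold, one should take $\varepsilon$ equal to the actual near-equal-norm/near-Parseval parameter of $U$. This is essentially $\|UU^*-I\|_{\mathrm{op}}$, since the columns of $U$ are exactly equal-norm, and it is comparable to $\sin^2\theta$. With this choice the distance is bounded below by a constant times $\varepsilon d$ as $\theta\to0$.

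For the final conclusion, suppose $f(\varepsilon,d,n)\le C\varepsilon^2 d$ held uniformly. Then $d\varepsilon/8\le C\varepsilon^2 d$, i.e.\ $\varepsilon\ge 1/(8C)$. This fails for small $\theta$, hence for small $\varepsilon$. So no bound of order $\varepsilon^2 d$ holds uniformly, and a polynomial bound in $\varepsilon$ and $d$ must dominate $c\,\varepsilon d$ along $n=2d$ for small $\varepsilon$. This is the content of Proposition~\ref{prop:paulsen_bound}.
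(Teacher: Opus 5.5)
Your proposal is correct and matches the paper's argument, which is simply ``combining the above lemmas'': the closest-ENPF lemma, additivity of $\|\cdot\|_{\mathrm F}^2$ over the $k$ diagonal blocks, and the single-block estimate $\|U-V\|_{\mathrm F}^2\ge\sin^2\theta\ge\varepsilon/4$ with $k=d/2$. Your remark that $\varepsilon$ must be read as the example's actual parameter $\|UU^*-I\|_{\mathrm{op}}=\sin^2 2\theta$, rather than an arbitrary admissible $\varepsilon$, is a worthwhile clarification that the paper leaves implicit.
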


This family of examples therefore proves Proposition~\ref{prop:paulsen_bound}.

\bibliographystyle{plain}
\bibliography{references}

\noindent \noindent \textbf{Samuel A.\  Ballas} \\
Department of Mathematics, Florida State University \\
\texttt{sballas@fsu.edu}

\vspace{1.5em}

\noindent \textbf{Ferhat Karabatman} \\
Department of Mathematics, Florida State University \\
\texttt{fk22@fsu.edu}

\vspace{1.5em}

\noindent \textbf{Tom Needham} \\
Department of Mathematics, Florida State University \\
\texttt{tneedham@fsu.edu}

\begin{appendix}
    
\section{Numerical Experiments}\label{numerical}

\begin{figure}
    \centering

    \begin{subfigure}{0.48\textwidth}
        \centering
        \includegraphics[width=\textwidth]{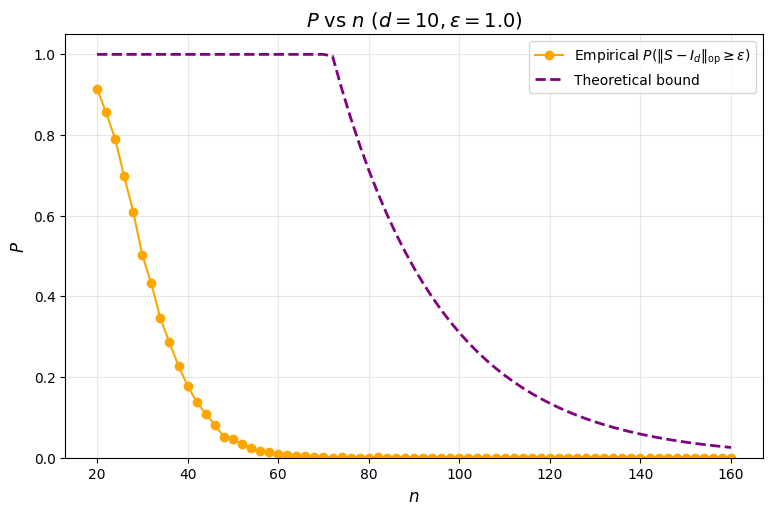}
    \end{subfigure}
    \hfill
    \begin{subfigure}{0.48\textwidth}
        \centering
        \includegraphics[width=\textwidth]{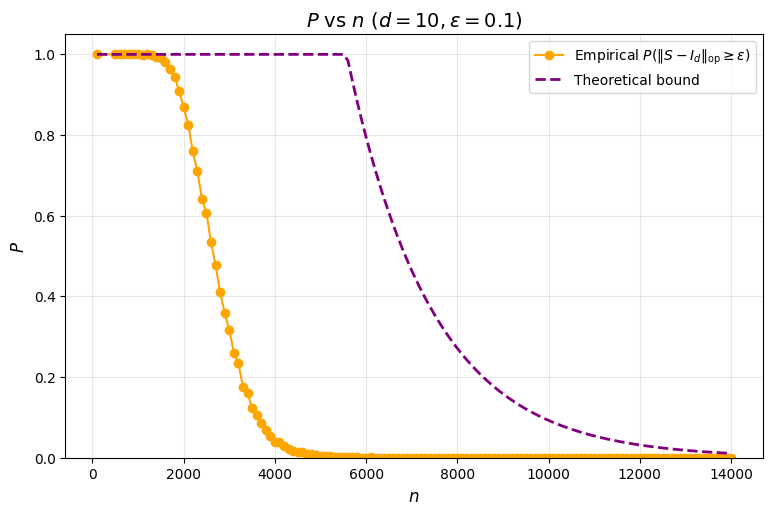}
    \end{subfigure}

    \vspace{0.5cm}

    \begin{subfigure}{0.48\textwidth}
        \centering
        \includegraphics[width=\textwidth]{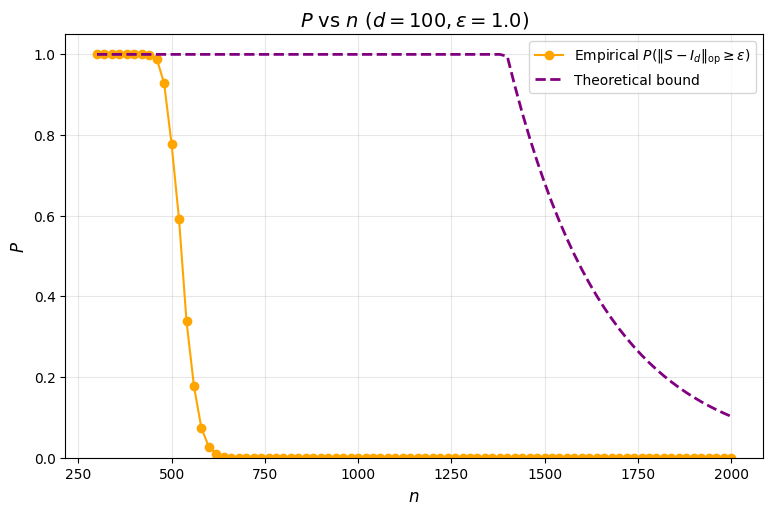}
    \end{subfigure}
    \hfill
    \begin{subfigure}{0.48\textwidth}
        \centering
        \includegraphics[width=\textwidth]{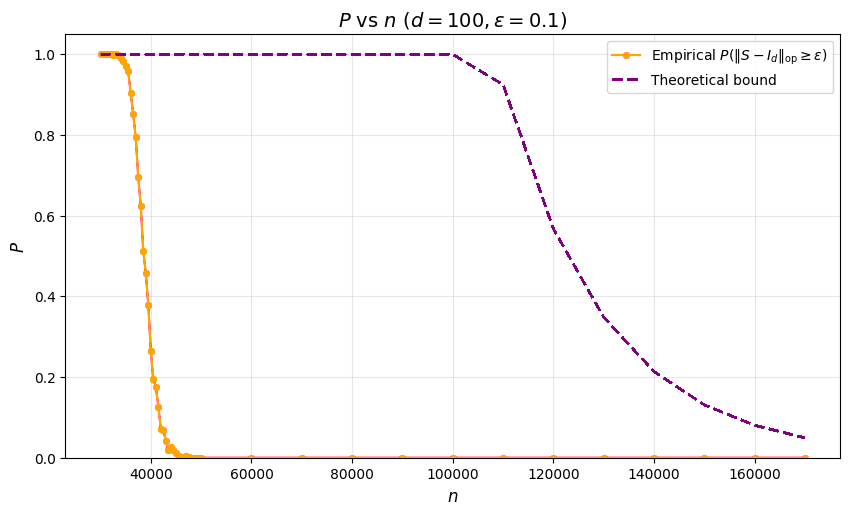}
    \end{subfigure}
\caption{Empirical probabilities of \(\|S-I_d\|_{\mathrm{op}} \geq \varepsilon\) for the sphere model, compared with the bound in Theorem~\ref{concent_sphere}}\label{figure_spheres}
\end{figure}

We numerically illustrate Theorem~\ref{concent_sphere} by sampling independent vectors uniformly from the sphere \(\mathbb S^{d-1}_{\sqrt{d/n}}\) and estimating the probability \(\mathbb P\bigl(\|S-I_d\|_{\mathrm{op}}\geq \varepsilon\bigr) \) via Monte Carlo simulation, as shown in Figure~\ref{figure_spheres}. The dashed orange curve represents the theoretical upper bound in Theorem~\ref{concent_sphere}, while the solid blue curve represents the empirical probability. In all cases, the empirical probability decays rapidly as \(n\) increases, illustrating the concentration of the random frame operator around the identity. The theoretical bound captures the same exponential decay behavior, although it is conservative, as expected from a non-asymptotic concentration inequality.

\begin{figure}
    \centering

    \begin{subfigure}{0.48\textwidth}
        \centering
        \includegraphics[width=\textwidth]{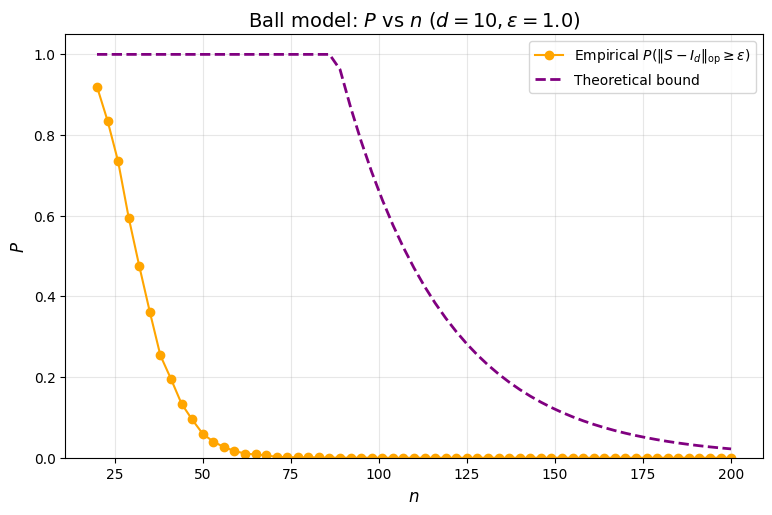}
    \end{subfigure}
    \hfill
    \begin{subfigure}{0.48\textwidth}
        \centering
        \includegraphics[width=\textwidth]{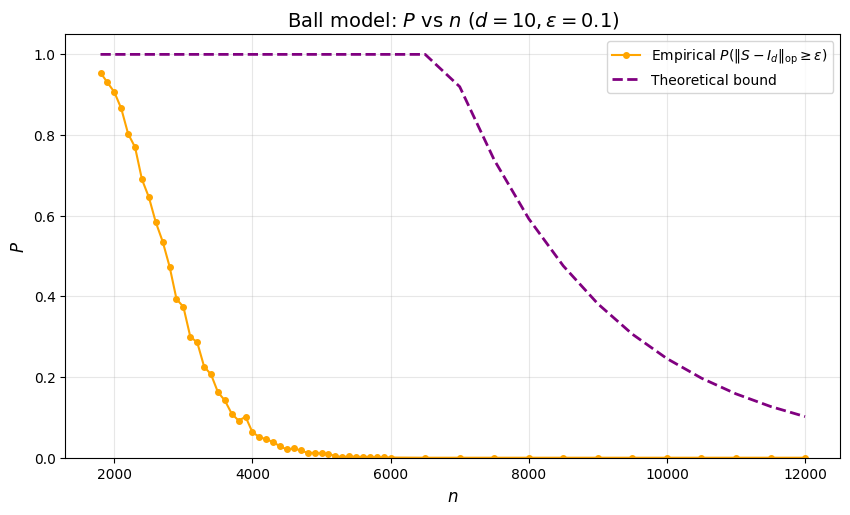}
    \end{subfigure}

    \vspace{0.5cm}

    \begin{subfigure}{0.48\textwidth}
        \centering
        \includegraphics[width=\textwidth]{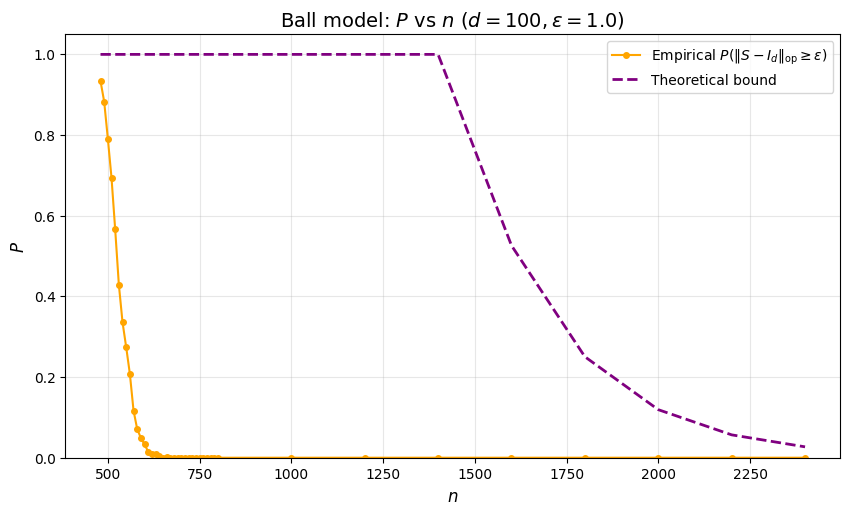}
    \end{subfigure}
    \hfill
    \begin{subfigure}{0.48\textwidth}
        \centering
        \includegraphics[width=\textwidth]{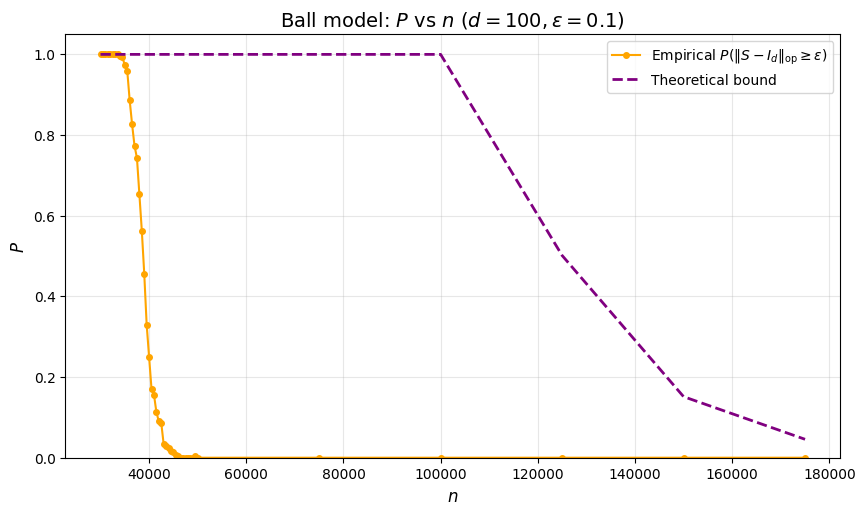}
    \end{subfigure}
\caption{Empirical probabilities of \(\|S-I_d\|_{\mathrm{op}} \geq \varepsilon\) for the ball model, compared with the bound in Theorem~\ref{concent_ball}}\label{figure_ball}
\end{figure}

We also illustrate the corresponding concentration result for the ball model. In Figure~\ref{figure_ball}, the vectors are sampled uniformly from the ball of radius \(\sqrt{(d+2)/n}\), which is chosen so that \(\mathbb E S=I_d\). As in the sphere case, the empirical probability \(\mathbb P\bigl(\|S-I_d\|_{\mathrm{op}}\geq \varepsilon\bigr)\)
decays rapidly as \(n\) increases. The theoretical bound captures the same decay behavior, while remaining conservative.

\begin{figure}
    \centering

    \begin{subfigure}{0.48\textwidth}
        \centering
        \includegraphics[width=\textwidth]{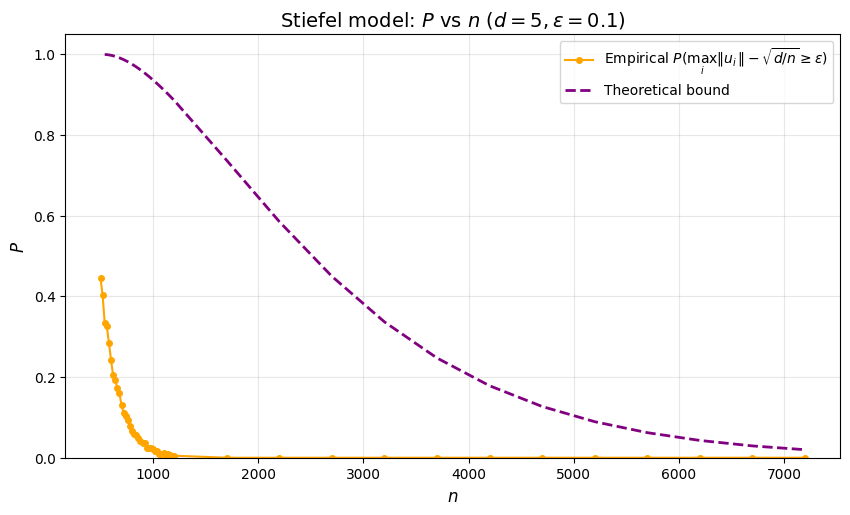}
    \end{subfigure}
    \hfill
    \begin{subfigure}{0.48\textwidth}
        \centering
        \includegraphics[width=\textwidth]{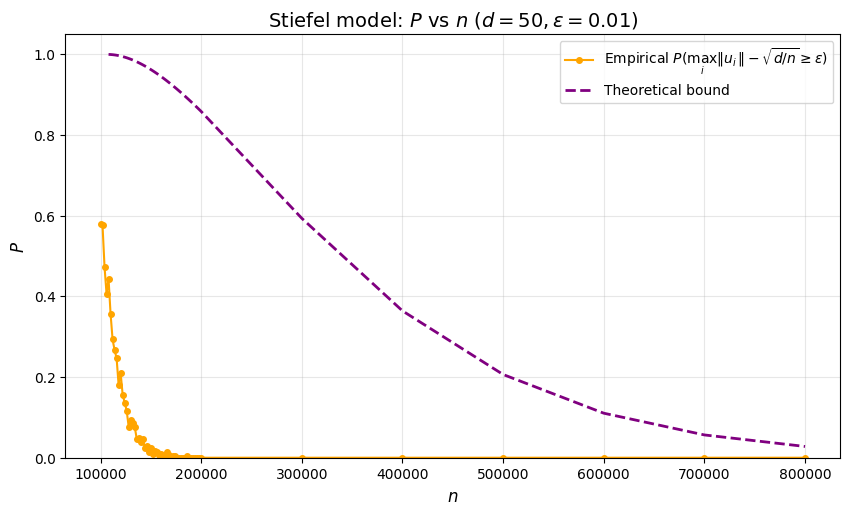}
    \end{subfigure}

\caption{Empirical probabilities for the Stiefel model, compared with the bound in Proposition~\ref{main}.}
\label{figure_stiefel}
\end{figure}

Finally, we numerically illustrate Proposition~\ref{main} by sampling matrices uniformly from the Stiefel manifold \(\mathrm{St}(n,d)\) and estimating the probability \(\mathbb P\left(
\max_{i\in\{1,\ldots,n\}}\|u_i\|-\sqrt{\frac dn}\geq \varepsilon
\right)\). Figure~\ref{figure_stiefel} compares the empirical probability with the theoretical upper bound.

In conclusion, our numerical experiments support the theoretical concentration bounds presented in the paper. However, the empirical behavior suggests that there may be room for improvement in the concentration inequalities. We leave this as a potential future research direction.

\clearpage
\end{appendix}

\end{document}